\theoremstyle{plain}
\newtheorem{theorem}{Theorem}[section]
\newtheorem{lemma}[theorem]{Lemma}
\newtheorem{claim}[theorem]{Claim}
\newtheorem{proposition}[theorem]{Proposition}
\newtheorem{corollary}[theorem]{Corollary}
\newtheorem{conjecture}[theorem]{Conjecture}
\newtheorem{problem}[theorem]{Problem}
\theoremstyle{remark}
\newenvironment{poc}{\begin{proof}[Proof of claim]}{\end{proof}}
\def\TT{\mathcal{T}_{9}}
\def\PP{\mathcal{P}_{12}}
\def\CC{{C}}
\def\N{\mathbb{N}}
\def\Z{\mathbb{Z}}
\def\mindeg{\delta}
\def\codeg{\delta_2}
\def\ss{\mathbb{S}}
\def\SS{\mathscr{S}}
\def\HH{\mathcal{H}}
\def\GG{\mathcal{G}}
\DeclareMathOperator\Deg{d}
\let\emptyset\varnothing
\newcommand*{\abs}[1]{\lvert#1\rvert}
\newcommand{\kzero}{33}
\newcommand{\e}{\ensuremath{\varepsilon}}
\newcommand{\al}{\alpha}
\let\originalleft\left
\let\originalright\right
\renewcommand{\left}{\mathopen{}\mathclose\bgroup\originalleft}
\renewcommand{\right}{\aftergroup\egroup\originalright}
\def\imod#1{\allowbreak\mkern10mu({\operator@font mod}\,\,#1)}
\begin{document}
	
	\title{Spanning surfaces in 3-graphs}
	
	\author{Agelos Georgakopoulos}
	\address{Mathematics Institute, University of Warwick, CV4\thinspace7AL, UK}
	\urladdr{http://homepages.warwick.ac.uk/~maslar/}
	
	\author{John Haslegrave}
	\address{Mathematics Institute, University of Warwick, CV4\thinspace7AL, UK}
	\email{j.haslegrave@cantab.net}
	
	\author{Richard Montgomery}
	\address{School of Mathematics, University of Birmingham, B15\thinspace2TT, UK}
	\email{r.h.montgomery@bham.ac.uk}
	
	\author{Bhargav Narayanan}
	\address{Department of Mathematics,	Rutgers University, Piscataway NJ 08854, USA}
	\email{narayanan@math.rutgers.edu}
	
	\date{17 August 2018}
	\subjclass[2010]{Primary 05E45; Secondary 05C65, 05C35}
	
	\begin{abstract}
		We prove a topological extension of Dirac's theorem suggested by Gowers in 2005: for any connected, closed surface $\mathscr{S}$, we show that any two-dimensional simplicial complex on $n$ vertices in which each pair of vertices belongs to at least $n/3 + o(n)$ facets contains a homeomorph of $\mathscr{S}$ spanning all the vertices. This result is asymptotically sharp, and implies in particular that any 3-uniform hypergraph on $n$ vertices with minimum codegree exceeding $n/3+o(n)$ contains a spanning triangulation of the $2$-sphere.
	\end{abstract}
	
	\maketitle
	
	\section{Introduction}
	In this paper, we extend the classical graph-theoretic result of Dirac~\citep{dirac} on spanning cycles to the setting of simplicial $2$-complexes, or, equivalently, the setting of $3$-uniform hypergraphs (or \emph{$3$-graphs} for short). Dirac's theorem determines the best-possible minimum degree condition which guarantees that an $n$-vertex graph contains a \emph{Hamiltonian cycle}, i.e., a cycle spanning the entire vertex set of the graph. A natural generalisation is to treat a `spanning cycle in a $3$-graph' as a triangulation of the $2$-sphere spanning the vertex set, and here we determine asymptotically the best-possible minimum codegree condition which guarantees the existence of such an object in an $n$-vertex $3$-graph.
	
	Dirac's theorem is of central importance in graph theory, and a number of different extensions to $3$-graphs have previously been shown; see~\citep{icm} for a broad overview. All these results --- see~\citep{bermond, katona, rodl-1, rodl-3}, for example --- treat a `spanning cycle in a $3$-graph' as a rigid pattern of interlocking edges with respect to some cyclic ordering of the underlying vertex set, an inherently one-dimensional notion. Here, we shall take a topological point of view, and consider a two-dimensional extension to $3$-graphs.
	
	To motivate our point of view, already implicit in the work of Brown, Erd\H{o}s and S\'os~\citep{erdos}, we start by observing that a Hamiltonian cycle in a graph $G$ is a set of edges of $G$ such that the simplicial complex induced by these edges is homeomorphic to $\ss^1$, the one-dimensional sphere, where, additionally, the $0$-skeleton of this complex is the entire vertex set of $G$. By analogy, we define \emph{a copy of the sphere} in a $3$-graph $\HH$ to be a set of edges of $\HH$ such that the simplicial complex induced by these edges is homeomorphic to $\ss^2$, the two-dimensional sphere, and we say that a copy of the sphere in a $3$-graph $\HH$ is \emph{spanning} or \emph{Hamiltonian} if the $0$-skeleton of the associated simplicial complex is the entire vertex set of $\HH$. The following natural question, in the spirit of Dirac's theorem, was raised independently by Gowers~\citep{tim1} and Conlon~\citep{conlon}.
	
	\begin{problem}\label{q:spheredeg}
		What degree conditions guarantee the existence of a spanning copy of the sphere in a $3$-graph?
	\end{problem}
	
	Equivalently, Problem~\ref{q:spheredeg} asks for degree conditions that guarantee the existence of a homeomorphic copy of $\ss^2$ containing all the vertices in a simplicial 2-complex. We will, however, adhere to the language of hypergraphs in what follows, as this is the language in which most related results have been formulated.
	
	Before we turn to answering the above question, let us place the problem in a more general context. While the circle $\ss^1$ is, up to homeomorphism, the unique connected, closed $1$-manifold, this is no longer the case in two dimensions. Hence, one can ask a more general question by replacing the sphere with an arbitrary connected, closed $2$-manifold (or \emph{surface} for short). The following more general question fits into the `higher-dimensional combinatorics' programme of Linial~\citep{linial1, linial2, linial3}, and was also suggested by Gowers~\citep{tim2}.
	
	\begin{problem}\label{q:surfacedeg}
		Given a surface $\SS$, what degree conditions guarantee the existence of a spanning copy of $\SS$ in a $3$-graph?
	\end{problem}
	
	Of course, by the classification theorem (see~\citep{toptext, zip}, for example), every surface is homeomorphic to either the sphere $\ss^2$, a connected sum of finitely many tori, or a connected sum  of finitely many real projective planes. Also, to be clear, a copy of a surface $\SS$ in a $3$-graph $\HH$ is, as before, a set of edges of $\HH$ such that the simplicial complex induced by these edges is homeomorphic to $\SS$, and we say that a copy of $\SS$ in a $3$-graph $\HH$ is spanning if the $0$-skeleton of the associated simplicial complex is the entire vertex set of $\HH$.
	
	Finally, instead of asking for a spanning copy of a specific surface, one might settle for a spanning copy of any surface whatsoever. In other words, we could ask the following weaker question.
	
	\begin{problem}\label{q:timques}
		What degree conditions guarantee the existence of a spanning copy of some surface in a $3$-graph?
	\end{problem}
	
	In this paper, we shall prove a result that gives an asymptotically sharp solution to Problems~\ref{q:spheredeg},~\ref{q:surfacedeg} and~\ref{q:timques}. Recall that the codegree of a pair of vertices in a $3$-graph $\HH$ is the number of edges of $\HH$ containing the vertex pair. Writing $\codeg(\HH)$ for the minimum codegree of a $3$-graph $\HH$, our main result says the following.
	
	\begin{theorem}\label{mainthm}
		For every surface $\SS$ and every $\mu > 0$, the following holds for all sufficiently large $n\in \N$: any $3$-graph $\HH$ on $n$ vertices with $\codeg(\HH) \ge n/3 + \mu n$ contains a spanning copy of $\SS$.
		
		Moreover, for each $n \in \N$, there exists a $3$-graph $\HH$ on $n$ vertices with $\codeg(\HH) = \lfloor n/3 \rfloor - 1$ such that there are at most $2\lceil n/3 \rceil$ vertices in the $0$-skeleton of a copy of any surface in $\HH$.
	\end{theorem}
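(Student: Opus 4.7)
The theorem has two parts: the codegree upper bound and the sharpness construction, which I address in turn.

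For the upper bound, my plan is to use the absorption method, now standard for spanning-structure problems. The key steps are: (i) build a small absorbing subcomplex $\mathcal{A} \subseteq \HH$ on $o(n)$ vertices such that, for any sufficiently small set $W$ of additional vertices, $\mathcal{A}$ extends to a triangulation of $\SS$ on $V(\mathcal{A}) \cup W$; (ii) find an almost-spanning triangulation of $\SS$ in $\HH \setminus V(\mathcal{A})$ covering all but sublinearly many vertices; (iii) combine these by absorbing the leftovers into $\mathcal{A}$. The codegree hypothesis $\codeg(\HH) \geq n/3 + \mu n$ enters in step (i) to produce enough copies of each local absorption gadget, and in step (ii) to find the near-spanning triangulation via a density or regularity argument. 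The principal obstacle is step (i): for a general surface $\SS$, one needs local topological moves that can reroute a triangulation through arbitrary small sets of extra vertices while preserving the homeomorphism type.

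For the sharpness statement, I would exhibit $\HH$ explicitly. Partition $V = A \sqcup B$ with $|A|$ chosen (depending on $n \bmod 3$) so that $|A| - 2 = \lfloor n/3 \rfloor - 1$, and let $\HH$ consist of all triples $T$ with an odd number of vertices in $A$, i.e., the all-$A$ triples together with the one-$A$/two-$B$ triples. A three-case computation gives codegrees $|A|-2$, $|B|-1$, $|A|$ for AA-, AB-, BB-pairs respectively, so $\codeg(\HH) = |A|-2 = \lfloor n/3 \rfloor - 1$, attained on AA-pairs.

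To bound $|V(S)|$ for a surface $S \subseteq \HH$, I will analyze the link $C_v$ of each vertex, a cycle in the $1$-skeleton of $S$ by the manifold condition. Since every triangle of $S$ is AAA or ABB, the edges of $C_v$ at an $A$-vertex $v$ are either AA (from AAA triangles) or BB (from ABB triangles). Each vertex of $C_v$ is incident to two cycle-edges of the same type, forcing $C_v$ to be monochromatic, either contained in $A$ (``Type I'') or in $B$ (``Type II''). For $v \in B$, all $C_v$-edges are AB, so $C_v$ alternates between $A$- and $B$-vertices. Type I $A$-vertices lie only in AAA triangles and Type II $A$-vertices only in ABB triangles, so the two subcomplexes are vertex-disjoint, and connectedness of $S$ forces one to be empty. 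The Type I case gives $V(S) \subseteq A$, so $|V(S)| \leq |A| \leq 2\lceil n/3 \rceil$. In the Type II case, writing $\alpha = |V(S) \cap A|$ and $\beta = |V(S) \cap B|$, each Type II vertex has $d_S(v) \leq \beta$ and each $B$-vertex has $d_S(v) = 2|N_S(v) \cap A| \leq 2\alpha$; combining these bounds with the Euler relation $F = 2(\alpha + \beta - \chi)$ coming from $V - E + F = \chi$ and $2E = 3F$ yields the desired bound $\alpha + \beta \leq 2\lceil n/3 \rceil$ after elementary algebra.

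The main obstacle on the sharpness side is pinning down the constants so that the surface bound is exactly $2\lceil n/3 \rceil$ rather than $2\lceil n/3 \rceil + O(1)$: the near-extremal example is the ABB ``double-pyramid'' suspension on $\beta + 2$ vertices, so the sizes $|A|, |B|$ must be calibrated carefully according to $n \bmod 3$, and a small ancillary adjustment of $\HH$ (excluding a restricted family of ABB triples that still preserves $\codeg(\HH) = \lfloor n/3 \rfloor - 1$) may be required to block the largest suspensions.
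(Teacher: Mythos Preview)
Your sharpness construction is wrong: the $3$-graph $\HH$ you describe (with edges the AAA and ABB triples, $|A|=\lfloor n/3\rfloor+1$) \emph{does} contain a spanning copy of the sphere. The ABB edges form a tight component spanning all of $V(\HH)$, and one can realise a spanning sphere inside it as follows. Observe that an ABB-only triangulation of $\mathbb S^2$ is equivalent to a $2$-connected simple planar graph on $B$ with exactly $|A|$ faces, each of length at least $3$ (the BB-edges form this graph, and each face is the link cycle of one $A$-vertex). Such a graph exists whenever $|A|\le 2|B|-4$, which holds for all $n\ge 7$ with your parameters: concretely, take the cycle $b_1b_2\cdots b_{|B|}$ and add the chords $b_1b_3,b_1b_4,\ldots,b_1b_{|A|}$. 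Your Euler-formula argument cannot rule this out; the inequality you obtain from $\sum_v d_S(v)\le \alpha\beta+2\alpha\beta$ is $2(\alpha+\beta-\chi)\le \alpha\beta$, which is easily satisfied with $\alpha=|A|$, $\beta=|B|$. The ``ancillary adjustment'' you suggest (removing some ABB triples to block large suspensions) will not help either, since the spanning spheres above are not suspensions. The paper instead uses a tripartite construction (three parts $X,Y,Z$ of size $\lfloor n/3\rfloor$, edges of type $XXY$, $YYZ$, $ZZX$), in which every tight component spans only two of the three parts; the bound $2\lceil n/3\rceil$ is then immediate.

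For the upper bound, your three-step outline is correct in spirit but glosses over the central difficulty. Below codegree $n/2$ the edge set of $\HH$ can split into several tight components, and edges from different components can never appear together in a single surface; so one cannot simply ``produce enough copies of each local absorption gadget'' and expect them to mesh with the almost-spanning structure. The paper's main work is a colouring scheme (its Lemmas on $(\alpha,k)$-connectibility and the $(\varepsilon,\mu)$-colouring) showing that almost all edges can be $2$-coloured so that any two disjoint edges of the same colour lie in many small spheres together; only then can absorbers, an almost-spanning collection of \emph{many small} green spheres (not a single near-spanning triangulation), and a reservoir be built compatibly and glued along green edges. Without something playing the role of this connectibility analysis, steps (i)--(iii) of your plan do not combine.
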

	
	\begin{figure} \label{figXYZ}
		\begin{center}
			
			\begin{tikzpicture}[scale = 0.64]
			
			\draw (0,0) circle (2cm);
			\draw (6,0) circle (2cm);
			\draw (12,0) circle (2cm);
			\draw (18,0) circle (2cm);
			\node at (0, 0) [] {$X$};
			\node at (6, 0) [] {$Y$};
			\node at (12, 0) [] {$Z$};
			\node at (18, 0) [] {$X$};

			\node at (0.8, 0.8) [inner sep=0.7mm, circle, fill=black!100] {};
			\node at (17.2, -0.8) [inner sep=0.7mm, circle, fill=black!100] {};
			\node at (0.8, -0.8) [inner sep=0.7mm, circle, fill=black!100] {};
			\node at (5.2, -0.8) [inner sep=0.7mm, circle, fill=black!100] {};
			\node at (6.8, +0.8) [inner sep=0.7mm, circle, fill=black!100] {};
			\node at (6.8, -0.8) [inner sep=0.7mm, circle, fill=black!100] {};
			\node at (6+5.2, -0.8) [inner sep=0.7mm, circle, fill=black!100] {};
			\node at (6+6.8, +0.8) [inner sep=0.7mm, circle, fill=black!100] {};
			\node at (6+6.8, -0.8) [inner sep=0.7mm, circle, fill=black!100] {};

			\draw [thick] (0.8, 0.8) -- (0.8, -0.8) -- (5.2, -0.8) -- (0.8, 0.8);
			\draw [thick] (6.8, 0.8) -- (6.8, -0.8) -- (11.2, -0.8) -- (6.8, 0.8);
			\draw [thick] (12.8, 0.8) -- (12.8, -0.8) -- (17.2, -0.8) -- (12.8, 0.8);
			
			\end{tikzpicture}
		\end{center}
		\caption{An extremal construction.}\label{fig:tripart}
	\end{figure}
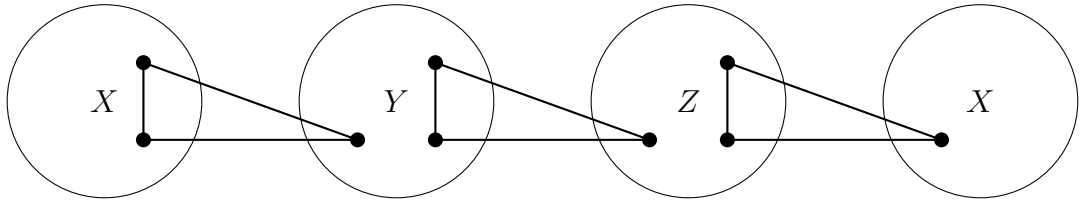
	
	The second half of Theorem~\ref{mainthm} follows from the simple construction shown in Figure~\ref{fig:tripart}. It will be helpful to have some notation to discuss this construction: we define a relation on the edge set of a $3$-graph $\HH$ by saying that that two edges of $\HH$ \emph{touch} if they intersect in two vertices, and we call an equivalence class of edges in the transitive closure of this relation a \emph{tight component} of $\HH$. Observe that the set of edges constituting a copy of a surface in a $3$-graph $\HH$ must belong to a single tight component of $\HH$ since all the surfaces under consideration are without boundary. Now, given $n \in \N$, let $X$, $Y$ and $Z$ be three disjoint sets of vertices, with sizes as equal as possible, such that $|X| + |Y| + |Z| = n$ and consider, as in Figure~\ref{fig:tripart}, the $3$-graph $\HH$ on the vertex set $X \cup Y \cup Z$ whose edge set consists of all triples either intersecting $X$ in two vertices and $Y$ in one, intersecting $Y$ in two vertices and $Z$ in one, or intersecting $Z$ in two vertices and $X$ in one. It is easy to see that $\codeg(\HH) = \lfloor n/3 \rfloor - 1$. Furthermore, it is clear that the edge set of $\HH$ consists of three tight components, with each tight component spanning two of the three vertex classes $X$, $Y$ and $Z$. Thus, there are at most $2\lceil n/3 \rceil$ vertices in the $0$-skeleton of a copy of any surface in $\HH$.
	
	It is left then to prove the first half of Theorem~\ref{mainthm}. One main obstacle to doing this, in constrast to Dirac's theorem, is that, potentially, many of the edges inherently cannot contribute to a spanning copy of any surface. The construction above shows that for each $n \in \N$, there exists a $3$-graph $\HH$ with $\codeg(\HH) = \lfloor n/3 \rfloor - 1$ which does not contain a spanning tight component. As we shall see (in Proposition~\ref{toycomp}), a $3$-graph $\HH$ on $n$ vertices whose minimum codegree exceeds $n/3$ contains a spanning tight component. However, it may additionally have another non-spanning tight component whose edges are of no use whatsoever when trying to build a spanning copy of any surface. This already presents a challenge, but the situation is in fact more intricate: indeed, as we shall see (in Conjecture~\ref{tightcompconj} and the discussion preceding it), even assuming that $\HH$ has a unique spanning tight component does not make the problem at hand significantly easier.
	
	It is perhaps worth mentioning that the construction depicted in Figure~\ref{fig:tripart} is not uniquely extremal, and indeed, there exist other non-isomorphic families of constructions that demonstrate that Theorem~\ref{mainthm} is tight. Given a surface $\SS$ with Euler characteristic $\chi \in \Z$, and any sufficiently large natural number $n \in \N$, we construct a $3$-graph on $n$ vertices with no spanning copy of $\SS$ as follows. Let $X$ and $Y$ be two disjoint sets of vertices, with $|X| + |Y| = n$ and $|X|$ the least integer exceeding $(2n-2\chi)/3$, and let $\HH$ be the $3$-graph on $X \cup Y$ whose edge set consists of all triples meeting $X$ in an odd number of vertices. It is clear that $\codeg(\HH) = |Y| - 1 = \lceil (n + 2\chi)/3 \rceil - 2$, and it is easily verified that this construction is not isomorphic to the one discussed earlier. Now, suppose for a contradiction that there is a spanning copy of $\SS$ in $\HH$. This copy of $\SS$ cannot contain any triple contained entirely in $X$ since these edges all lie in a single tight component that spans $X$ but not $Y$. Next, view each edge of $\HH$ in this copy of $\SS$ as a facet of a triangulation of $\SS$ and count, for each facet, the number of vertices of $X$ on its boundary. Each facet contains exactly one vertex in $X$, so this quantity is equal to the number of facets, which by Euler's formula, is $2n - 2\chi$. On the other hand, each vertex in $X$ is on the boundary of at least three facets since $\SS$ has no boundary, so this quantity is at least $3|X|$. We conclude that $2n - 2\chi \ge 3|X|$, which is a contradiction.
	
	A few comments about results in the vicinity of Theorem~\ref{mainthm} are also in order. Questions in the spirit of Problem~\ref{q:spheredeg} have previously been asked for graphs: K\"uhn, Osthus and Taraz~\citep{kot} proved a Dirac-type theorem for finding spanning planar triangulations in graphs; applying Theorem~\ref{mainthm} to the $3$-graph of all the triangles in a graph recovers their result. In the probabilistic setting, it is natural to ask when a spanning copy of the sphere is likely to appear in the binomial random $3$-graph: Luria and Tessler \citep{threshold} recently established a sharp threshold result for this problem. Finally, it has also been brought to our attention that Conlon, Ellis and Keevash~\citep{conlon} earlier proved (in unpublished work) a weaker statement in the direction of Theorem~\ref{mainthm}, showing that any $3$-graph $\HH$ on $n$ vertices with $\codeg(\HH) \ge 2n/3 + o(n)$ contains a spanning copy of the sphere.
	
	This paper is organised as follows. We establish some notation and collect together the various results required for the proof of our main result in Section~\ref{s:prelim}. We give a short sketch of the proof of Theorem~\ref{mainthm} highlighting the main obstacles and how we circumvent them in Section~\ref{s:outline}; the proof proper follows in Section~\ref{s:proof}. Finally, we conclude by discussing some open problems and directions for further research in Section~\ref{s:conc}.
	
	\section{Preliminaries}\label{s:prelim}
	For $n \in \N$, let $[n]=\{1, 2, \dots, n\}$. For a set $X$ and $r \in \N$, we write $X^{(r)}$ for the family of $r$-element subsets of $X$. In this language, an $r$-graph $G$ is a pair $(V,E)$ of finite sets with $E \subset V^{(r)}$. Here, we shall only be concerned with $2$-graphs and $3$-graphs; as usual, we refer to $2$-graphs as graphs.
	
	Let $G = (V,E)$ be a graph. For a vertex $x \in V(G)$, its \emph{neighbourhood $N_G(x)$} is the set of vertices adjacent to $x$, and its \emph{degree $\Deg_G(x)$} is the size of $N_G(x)$. The \emph{minimum degree $\mindeg(G)$} of $G$ is
	\[\mindeg(G)=\min\{\Deg_G(x): x \in V(G)\}.\]
	For any two sets of vertices $X, Y \subset V(G)$, we write $E_G(X,Y)$ for the set of edges with one endpoint in $X$ and one endpoint in $Y$.
	
	Next, let $\HH = (V,E)$ be a $3$-graph. For a pair of distinct vertices $x,y \in V(\HH)$, we define their \emph{neighbourhood $N_\HH(x,y)$} to be the set of vertices $z \in V(\HH)$ such that $x y z \in E(\HH)$, and we define the \emph{codegree $\Deg_\HH(x,y)$} of $x$ and $y$ to be size of $N_\HH(x, y)$. The \emph{minimum codegree $\codeg(\HH)$} of $\HH$ is
	\[\codeg(\HH)=\min\{\Deg_\HH(x,y): x,y \in V(\HH)  \text{ and } x \ne y\}.\]
	For any vertex $v \in V(\HH)$, we define the link graph $L_\HH(v)$ of $v$ to be the graph on $V(\HH) \setminus \{ v \}$ where two vertices $x$ and $y$ are joined with an edge if $v \in N_\HH(x,y)$. Finally, as mentioned earlier, we define a relation on the edge set of a $3$-graph $\HH$ by saying that that two edges of $\HH$ \emph{touch} if they intersect in two vertices, and we call an equivalence class of edges in the transitive closure of this relation a \emph{tight component} of $\HH$.
	
	We will use the following classical result of Erd\H{o}s~\citep{degen} generalising an old result of K\"ovari, S\'os and Tur\'an~\citep{kovari}. Recall that an $r$-graph $\GG$ is \emph{degenerate} if there exists a colouring of $V(\GG)$ with $r$ colours such that each edge of $\GG$ meets each of the $r$ colour classes.
	\begin{theorem}\label{kov}
		For each degenerate $r$-graph $\GG$, there exists $c >0$ such that every $r$-graph $\HH$ on $n$ vertices with $|E(\HH)| \ge n^{r - c}$ contains a copy of $\GG$ as a subgraph. \qed
	\end{theorem}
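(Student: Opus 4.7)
The plan is to prove this by induction on $r$, with base case $r = 2$ being the Kővári–Sós–Turán bound $\mathrm{ex}(n, K_{s,t}) = O(n^{2-1/s})$. Since every degenerate $r$-graph with colour-class sizes $t_1, \ldots, t_r$ embeds in the complete $r$-partite $r$-graph $K^{(r)}_{t_1, \ldots, t_r}$, it suffices to locate the latter. For the inductive step, I would first reduce to the $r$-partite case by random partitioning: colouring $V(\HH)$ uniformly at random with $[r]$ preserves in expectation an $r!/r^r$ fraction of the edges as rainbow, so one may fix a partition $V(\HH) = W_1 \sqcup \cdots \sqcup W_r$ with each $|W_i| = \Theta(n)$ and an $r$-partite subhypergraph $\HH' \subseteq \HH$ retaining $\Omega(n^{r - c_r})$ edges, for a constant $c_r = c_r(t_1, \ldots, t_r) > 0$ to be pinned down at the end.

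The heart of the argument is a double-counting step. For each tuple $(v_1, \ldots, v_{r-1}) \in W_1 \times \cdots \times W_{r-1}$, let $d(v_1, \ldots, v_{r-1})$ denote the number of $v_r \in W_r$ completing an edge of $\HH'$. The sum of these codegrees is $|E(\HH')|$, so the average is at least $\Omega(n^{1-c_r})$, and Jensen's inequality applied to $x \mapsto \binom{x}{t_r}$ yields
\[
\sum_{(v_1, \ldots, v_{r-1})} \binom{d(v_1, \ldots, v_{r-1})}{t_r} \;=\; \Omega\bigl(n^{\,r - 1 + t_r(1 - c_r)}\bigr).
\]
The left-hand side also counts pairs $(T, (v_1, \ldots, v_{r-1}))$ with $T \in \binom{W_r}{t_r}$ lying in the common link of the tuple, and there are only $O(n^{t_r})$ choices of $T$. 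Pigeonholing then produces some $T_0$ that is common-adjacent to $\Omega(n^{(r-1) - c_r t_r})$ such tuples; these form an $(r-1)$-partite $(r-1)$-graph $\HH_{T_0}$ on $W_1 \cup \cdots \cup W_{r-1}$ with that many edges.

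Choosing $c_r$ strictly smaller than $c_{r-1}/t_r$, where $c_{r-1}$ is the constant supplied by the inductive hypothesis for $K^{(r-1)}_{t_1, \ldots, t_{r-1}}$, the graph $\HH_{T_0}$ is dense enough to contain a copy of $K^{(r-1)}_{t_1, \ldots, t_{r-1}}$; appending the $t_r$ vertices of $T_0 \subseteq W_r$ then produces the desired copy of $K^{(r)}_{t_1, \ldots, t_r}$, and hence of $\GG$, inside $\HH$. The main obstacle is purely bookkeeping: one must verify that the recursively defined constants $c_r(t_1, \ldots, t_r)$ remain strictly positive, which is immediate from the base case $c_2 = 1/\max(t_1, t_2) > 0$ and the fact that the recursion only divides by the bounded quantity $t_r$ at each step.
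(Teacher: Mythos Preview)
The paper does not prove this statement at all: it is quoted as a classical result of Erd\H{o}s and marked with a \texttt{\textbackslash qed} immediately after the statement, so there is no ``paper's own proof'' to compare against. Your sketch is essentially Erd\H{o}s's original argument---reduce to complete $r$-partite $r$-graphs, pass to an $r$-partite host by random colouring, and then run the K\H{o}v\'ari--S\'os--Tur\'an double-count on codegrees into one part, applying induction to the common link of a popular $t_r$-set---and it is correct as outlined. The only places where a grader might ask for a word more are the routine ones: that the Jensen step is valid because the average codegree $\Omega(n^{1-c_r})$ exceeds $t_r$ once $n$ is large (for small $n$ the hypothesis $|E(\HH)|\ge n^{r-c}$ is vacuous once $c$ is small enough), and that the random-partition step loses only a constant factor $r!/r^r$, which is absorbed by shrinking $c_r$ slightly.
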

	
	In our proofs, we shall require the conclusion of Theorem~\ref{kov} for some specific degenerate graphs and $3$-graphs that we now define. For $k \in \N$, we denote a cycle of length $k$ by $\CC_k$; recall that if $k$ is even, then the $k$-cycle $\CC_k$ is degenerate. Next, we define two degenerate $3$-graphs as in Figure~\ref{fig:degen-col}, namely a $3$-graph $\TT$ on the vertex set $[9]$ and a $3$-graph $\PP$ on the vertex set $[12]$. It is clear that the simplicial complex induced by $\TT$ is homeomorphic to the two-dimensional torus. It may also be verified that the simplicial complex induced by $\PP$ is homeomorphic to the real projective plane; indeed, it is not hard to see that $\PP$ is obtained from a simple modification of the standard $6$-point triangulation of the real projective plane.
	
	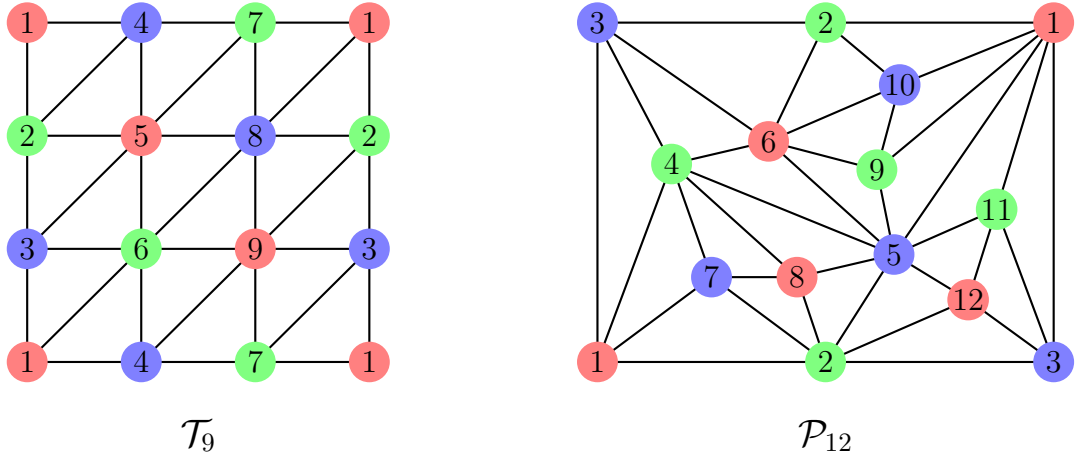
\begin{figure}
		\begin{center}
			
			\begin{tikzpicture}[scale = 0.75]
			\foreach \x in {0,2,4,6}
			\foreach \y in {0,2,4}
			\draw [thick] (\x,\y) -- (\x,\y+2);
			
			\foreach \x in {0,2,4}
			\foreach \y in {0,2,4,6}
			\draw [thick] (\x,\y) -- (\x+2,\y);
			
			\foreach \x in {0,2,4}
			\foreach \y in {0,2,4}
			\draw [thick] (\x,\y) -- (\x+2,\y+2);
			
			\node at (0, 6) [inner sep=0.7mm, circle, fill=red!50] {$1$};
			\node at (0, 4) [inner sep=0.7mm, circle, fill=green!50] {$2$};
			\node at (0, 2) [inner sep=0.7mm, circle, fill=blue!50] {$3$};
			\node at (0, 0) [inner sep=0.7mm, circle, fill=red!50] {$1$};
			
			\node at (2, 6) [inner sep=0.7mm, circle, fill=blue!50] {$4$};
			\node at (2, 4) [inner sep=0.7mm, circle, fill=red!50] {$5$};
			\node at (2, 2) [inner sep=0.7mm, circle, fill=green!50] {$6$};
			\node at (2, 0) [inner sep=0.7mm, circle, fill=blue!50] {$4$};
			
			\node at (4, 6) [inner sep=0.7mm, circle, fill=green!50] {$7$};
			\node at (4, 4) [inner sep=0.7mm, circle, fill=blue!50] {$8$};
			\node at (4, 2) [inner sep=0.7mm, circle, fill=red!50] {$9$};
			\node at (4, 0) [inner sep=0.7mm, circle, fill=green!50] {$7$};
			
			\node at (6, 6) [inner sep=0.7mm, circle, fill=red!50] {$1$};
			\node at (6, 4) [inner sep=0.7mm, circle, fill=green!50] {$2$};
			\node at (6, 2) [inner sep=0.7mm, circle, fill=blue!50] {$3$};
			\node at (6, 0) [inner sep=0.7mm, circle, fill=red!50] {$1$};
			
			\draw [thick] (10,0) -- (18,0) -- (18,6) -- (10,6) -- (10,0);
			\draw [thick] (11.3, 3.5) -- (15.2, 1.9) -- (13, 3.9) -- (11.3, 3.5);
			\draw [thick] (10,0) -- (11.3, 3.5) -- (10,6);
			\draw [thick] (10,6) -- (13, 3.9) -- (14,6);
			\draw [thick] (10,0) -- (12, 1.5) -- (13.5, 1.5) -- (15.2, 1.9 ) -- (14,0);
			\draw [thick] (12, 1.5) -- (11.3, 3.5) -- (13.5, 1.5);
			\draw [thick] (12, 1.5) -- (14, 0) -- (13.5, 1.5);
			\draw [thick] (15.2, 1.9 ) -- (18, 6);
			\draw [thick] (14,6) -- (15.3, 4.9) -- (14.9, 3.4) -- (15.2, 1.9 );
			\draw [thick] (15.3, 4.9) -- (13, 3.9) -- (14.9, 3.4);
			\draw [thick] (15.3, 4.9) -- (18, 6) -- (14.9, 3.4);
			\draw [thick] (14,0) -- (16.5, 1.1) -- (17, 2.7) -- (18, 6);
			\draw [thick] (16.5, 1.1) -- (15.2, 1.9 ) -- (17, 2.7);
			\draw [thick] (16.5, 1.1) -- (18, 0) -- (17, 2.7);
			
			\node at (10, 0) [inner sep=0.7mm, circle, fill=red!50] {$1$};
			\node at (14, 0) [inner sep=0.7mm, circle, fill=green!50] {$2$};
			\node at (18, 0) [inner sep=0.7mm, circle, fill=blue!50] {$3$};
			\node at (10, 6) [inner sep=0.7mm, circle, fill=blue!50] {$3$};
			\node at (14, 6) [inner sep=0.7mm, circle, fill=green!50] {$2$};
			\node at (18, 6) [inner sep=0.7mm, circle, fill=red!50] {$1$};
			\node at (11.3, 3.5) [inner sep=0.7mm, circle, fill=green!50] {$4$};
			\node at (15.2, 1.9 ) [inner sep=0.7mm, circle, fill=blue!50] {$5$};
			\node at (13, 3.9 ) [inner sep=0.7mm, circle, fill=red!50] {$6$};
			\node at (12, 1.5) [inner sep=0.7mm, circle, fill=blue!50] {$7$};
			\node at (13.5, 1.5) [inner sep=0.7mm, circle, fill=red!50] {$8$};
			\node at (14.9, 3.4) [inner sep=0.7mm, circle, fill=green!50] {$9$};
			\node at (15.3, 4.9) [inner sep=0.2mm, circle, fill=blue!50] {$10$};
			\node at (17, 2.7) [inner sep=0.2mm, circle, fill=green!50] {$11$};
			\node at (16.5, 1.1) [inner sep=0.2mm, circle, fill=red!50] {$12$};
			
			\node at (3, -1.3) [] {\large $\TT$};
			\node at (14, -1.3) [] {\large $\PP$};
			\end{tikzpicture}
		\end{center}
		\caption{The edge sets of $\TT$ and $\PP$ consist of all the triangles in the respective figures.}\label{fig:degen-col}
	\end{figure}
	
	We will also use Szemer\'edi's regularity lemma~\citep{szem}; to state the lemma, we need some more notation. Given a graph $G$, and two disjoint nonempty sets of vertices $X, Y \subset V(G)$, we define the density of the pair $(X,Y)$ by
	\[d_G(X,Y) = \frac{|E_G(X,Y)|}{|X||Y|},\]
	and additionally, for $\e >0$, we say that the pair $(X,Y)$ is \emph{$\e$-regular} if we have $|d_G(A,B) - d_G(X,Y)| \le \e$ for all $A \subset X$ and $B\subset Y$ with $|A| \ge \e|X|$ and $|B| \ge \e |B|$. We say that a partition $V_0 \cup V_1 \cup \dots \cup V_k$ of the vertex set of a graph $G$ is an \emph{$\e$-regular partition} if
	\begin{enumerate}
		\item $|V_0| \le \e |V(G)|$,
		\item $|V_1| = |V_2| = \dots = |V_k|$, and
		\item all but at most $\e k^2$ pairs $(V_i, V_j)$ with $1 \le i < j \le k$ are $\e$-regular.
	\end{enumerate}
	In this language, the regularity lemma may be phrased as follows.
	\begin{theorem}\label{sz-reg}
		For every $\e > 0$ and each $t \in \N$, there exists an integer $T$ such that every graph $G$ on at least $T$ vertices admits an $\e$-regular partition $V_0 \cup V_1 \cup \dots \cup V_k$ of its vertex set with $t \le k \le T$.
		\qed
	\end{theorem}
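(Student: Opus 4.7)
The plan is to prove the regularity lemma by the classical \emph{energy increment} argument, where one measures the ``mean-square density'' of a partition and shows that any failure of $\e$-regularity can be converted into a definite quantitative gain in this quantity. Since the mean-square density is a bounded quantity, only boundedly many improvements are possible, and the process must terminate at a regular partition.

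Concretely, for any partition $\mathcal{P} = \{V_1, \dots, V_k\}$ of $V(G)$ (ignoring an exceptional set $V_0$ for the moment) define the \emph{index}
\[ q(\mathcal{P}) = \sum_{1 \le i,j \le k} \frac{|V_i||V_j|}{|V(G)|^2}\, d_G(V_i,V_j)^2.\]
I would first record two basic facts: (i) $0 \le q(\mathcal{P}) \le 1$, and (ii) $q$ is monotone under refinement, i.e., if $\mathcal{P}'$ refines $\mathcal{P}$ then $q(\mathcal{P}') \ge q(\mathcal{P})$; both follow from Cauchy--Schwarz applied within each pair $(V_i, V_j)$. Next, I would prove the key lemma: if $(V_i, V_j)$ is \emph{not} $\e$-regular, witnessed by sets $A \subset V_i$, $B \subset V_j$ with $|d_G(A,B) - d_G(V_i,V_j)| > \e$, then the partition of $V_i \cup V_j$ into $\{A, V_i \setminus A, B, V_j \setminus B\}$ raises the contribution of this pair to $q$ by at least $\e^4 |V_i||V_j|/|V(G)|^2$. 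Summed over the at least $\e k^2$ irregular pairs, simultaneously refining using all the witnesses yields a partition $\mathcal{P}''$ with $q(\mathcal{P}'') \ge q(\mathcal{P}) + \e^5$.

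The main obstacle is balancing this energy-increment machinery against the structural constraints required by the lemma: the parts $V_1, \dots, V_k$ must have equal size, and the exceptional part $V_0$ must stay of size at most $\e |V(G)|$. I would handle this as follows. Start with an arbitrary equitable partition into $t$ parts (with $V_0 = \emptyset$). At each stage, simultaneously refine each $V_i$ by intersecting with the at most $2^{k-1}$ Boolean atoms generated by witness sets; this produces parts of possibly very unequal sizes but with index increased by at least $\e^5$. Then re-equitably-partition each refined cluster into pieces of a common size $m$, throwing the leftover at most $m$ vertices per original part into $V_0$. A short calculation shows this re-equitization affects $q$ by $o(1)$ relative to the $\e^5$ gain (by choosing the new part-size sufficiently large), and the cumulative leftover going into $V_0$ stays below $\e |V(G)|$ provided the initial partition is fine enough.

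Finally, since $q$ is bounded above by $1$, the refinement procedure terminates after at most $\e^{-5}$ iterations. Consequently, if one starts with $k = t_0 \ge t$ parts, the number of parts after $i$ iterations is bounded by an explicit tower-type function of $i$ (roughly $t_0 \cdot 2^{k}$ each step), and the final number of parts is bounded by a constant $T = T(\e, t)$ depending only on $\e$ and $t$. This $T$ is the integer whose existence is claimed. The lemma is stated for sufficiently large graphs so that the equitization step can always be performed with negligible loss, which is where the implicit dependence of ``large enough'' on $T$ enters. I expect the only genuinely delicate part of the write-up to be the bookkeeping in the equitization step, as the energy-increment lemma and termination argument are essentially forced once the index $q$ is introduced.
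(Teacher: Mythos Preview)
The paper does not actually prove this theorem: it is stated with a terminal \qed\ and attributed to Szemer\'edi~\citep{szem} as a black box. Your proposal is the standard energy-increment proof and is correct in outline; there is nothing to compare against in the paper itself.
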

	
	It will also be convenient to have a few well-known consequences of the regularity lemma. The following proposition follows from the fact that the degrees of vertices in regular pairs are typically well-behaved.
	
	\begin{proposition}\label{bigdegsub}
		For every $\alpha > 0$, there exists $\beta > 0$ such that the following holds for all $n\in \N$. In each bipartite graph $G$ between vertex classes $X$ and $Y$ with $\alpha n \le |X|, |Y| \le n$ and $|E(G)| \ge \alpha n^2$, there exists a subset $U \subset X$ with $|U| \ge \beta n$ such that for each $x \in U$, there exists a subset $U_x \subset U$ of size at least $3|U|/4$ with $|N_G(x) \cap N_G(y)| \ge \beta n$ for each $y \in U_x$. \qed
	\end{proposition}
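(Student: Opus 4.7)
The plan is to apply Szemer\'edi's regularity lemma (Theorem~\ref{sz-reg}) to $G$ with a regularity parameter $\e = \e(\alpha) > 0$ chosen small enough, say $\e \le \min\{\alpha/10,\, 1/5\}$. Since $G$ is bipartite, the regular partition can be taken to respect the bipartition $(X, Y)$---either by invoking a bipartite analogue of the regularity lemma, or by splitting each part $V_i$ into $V_i \cap X$ and $V_i \cap Y$ and observing that sufficiently large subparts of regular pairs inherit regularity. Discarding the contribution of edges that lie in irregular pairs, in pairs of density less than $\alpha/4$, or incident to the exceptional class $V_0$---each of which accounts for a small fraction of $|E(G)| \ge \alpha n^2$ once $\e$ is small enough---leaves at least one $\e$-regular pair $(A, B)$ with $A \subset X$, $B \subset Y$, sizes $|A|, |B| \ge \beta_0 n$ for some $\beta_0 = \beta_0(\alpha) > 0$, and edge density $d \ge \alpha/4$.

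Having fixed this pair, the set $U$ is extracted from $A$ by a straightforward two-step use of $\e$-regularity. First, by the $\e$-regularity of $(A, B)$, the set $U$ of all vertices $x \in A$ satisfying $|N_G(x) \cap B| \ge (d - \e)|B|$ contains all but at most $\e|A|$ of $A$, so $|U| \ge (1 - \e)|A|$. Second, fix $x \in U$ and set $B_x := N_G(x) \cap B$, noting that $|B_x| \ge (d - \e)|B| \ge \e|B|$; applying $\e$-regularity of $(A, B)$ to the subsets $A$ and $B_x$ shows that all but at most $\e|A|$ vertices $y \in A$ satisfy $|N_G(y) \cap B_x| \ge (d - \e)|B_x|$, and any such $y$ obeys
\[
|N_G(x) \cap N_G(y)| \ge |N_G(y) \cap B_x| \ge (d - \e)^2 |B| \ge \beta n,
\]
where I set $\beta := \min\{(1 - \e)\beta_0,\, (d - \e)^2 \beta_0\} > 0$, a quantity depending only on $\alpha$. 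Taking $U_x$ to be the set of $y \in U$ satisfying this inequality, the $y \in U$ excluded from $U_x$ number at most $\e|A| \le \frac{\e}{1 - \e}|U|$, and the choice $\e \le 1/5$ forces $|U_x| \ge 3|U|/4$, as required.

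The one delicate point is the first step, namely extracting a regular pair between $X$ and $Y$ of density bounded below by a function of $\alpha$ only; this is entirely standard and reduces to checking that for $\e$ sufficiently small the three classes of discarded edges together account for strictly less than half of the $\alpha n^2$ edges of $G$. The second step is then a mechanical double application of the definition of $\e$-regularity: once to control individual $B$-degrees in $A$, and once to control common $B_x$-degrees.
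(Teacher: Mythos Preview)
Your proof is correct and follows precisely the approach the paper intends: the proposition is stated there with a \qed\ and the remark that it ``follows from the fact that the degrees of vertices in regular pairs are typically well-behaved,'' which is exactly the two-step regularity argument you carry out. One cosmetic point: your definition $\beta := \min\{(1-\e)\beta_0,\,(d-\e)^2\beta_0\}$ formally depends on the density $d$ of the chosen pair rather than on $\alpha$ alone, but since $d \ge \alpha/4$ you may simply replace $d$ by $\alpha/4$ there.
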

	
	We shall additionally use the following proposition establishing `supersaturation' for $4$-cycles in dense graphs, a special case of the results of Erd\H os and Simonovits~\citep{supersat}.
	\begin{proposition}\label{c4supsat}
		For every $\alpha > 0$, there exists $\beta > 0$ such that every $n$-vertex graph $G$ with $|E(G)| \ge \alpha n^2$ contains at least $\beta n^4$ copies of the $4$-cycle $\CC_4$. \qed
	\end{proposition}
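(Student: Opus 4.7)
The plan is to prove this by a standard double-counting argument combined with two applications of Jensen's inequality, following the template of the Kővári--Sós--Turán proof for $C_4$. Write $\Deg(v)$ for the degree of $v$ in $G$, and for distinct $u,v \in V(G)$ write $c(u,v) = \abs{N_G(u) \cap N_G(v)}$ for their codegree.

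First, I would count the number of unordered ``cherries'' (paths of length two) in $G$ in two ways. Indexing by the middle vertex gives $\sum_{v \in V(G)} \binom{\Deg(v)}{2}$, while indexing by the pair of endpoints gives $\sum_{\{u,v\} \in V(G)^{(2)}} c(u,v)$. Since $|E(G)| \ge \alpha n^2$, the average degree of $G$ is at least $2\alpha n$, so by Jensen's inequality applied to the convex function $x \mapsto \binom{x}{2}$,
\[ \sum_{\{u,v\}} c(u,v) \;=\; \sum_{v} \binom{\Deg(v)}{2} \;\ge\; n\binom{2\alpha n}{2} \;\ge\; 2\alpha^2 n^3 - O(n^2). \]

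Next, I would observe that a copy of $C_4$ in $G$ is determined by a choice of an unordered pair $\{u,v\}$ of opposite vertices together with an unordered pair of common neighbours of $u$ and $v$, and that in this bijection each unlabelled $C_4$ is counted exactly twice (once for each of its two diagonals). Hence the number of copies of $C_4$ in $G$ equals $\tfrac{1}{2}\sum_{\{u,v\}} \binom{c(u,v)}{2}$. Applying Jensen once more to the $\binom{n}{2}$ values $c(u,v)$, whose sum has just been bounded below, yields a lower bound of the form $\Omega_\alpha(n^4)$ on the number of $4$-cycles. Choosing $\beta = \beta(\alpha) > 0$ small enough to absorb the lower-order error terms (and handling trivially the finitely many small $n$ for which $\beta n^4 < 1$) completes the proof.

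There is no real obstacle here: the argument is entirely convexity and double counting, and the only mildly delicate point is ensuring that the two successive applications of Jensen's inequality leave constant factors of the right shape, which they do because $\binom{x}{2} = x^2/2 - x/2$ is within a lower-order term of a genuine square.
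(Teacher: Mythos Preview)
Your argument is correct: this is precisely the standard Kővári--Sós--Turán double-counting proof of $C_4$ supersaturation, and the two applications of Jensen give a bound of order $\alpha^4 n^4$ on the number of $4$-cycles, as you indicate.

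The paper does not actually prove this proposition; it simply cites it as a special case of the general supersaturation theorem of Erd\H{o}s and Simonovits and marks it with a \qed. Your proof is the direct, elementary route that the general supersaturation machinery specialises to in the case of $C_4$, so there is nothing to compare beyond noting that you have supplied the argument that the paper chose to outsource.
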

	
	We shall use the multiplicative Chernoff bound; see~\cite{HagRub}, for instance.
	\begin{proposition}\label{chernoff}Let $X$ be a binomial random variable with mean $\mu$. Then for any fixed $\delta>0$, we have
		\[\mathbb{P}(X<(1-\delta)\mu)<\genfrac{(}{)}{}{}{\mathrm e^{-\delta}}{(1-\delta)^{1-\delta}}^\mu.\tag*{\qed}\]
	\end{proposition}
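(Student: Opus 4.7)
The plan is to apply the standard Chernoff moment-generating-function argument, which reduces the bound to an elementary one-variable optimisation. We may assume $0 < \delta < 1$, since for $\delta \ge 1$ either the event $\{X < (1-\delta)\mu\}$ has probability zero or the right-hand side requires separate interpretation. Write $X = X_1 + \cdots + X_n$ as a sum of independent Bernoulli$(p)$ variables with $np = \mu$. For any $t > 0$, Markov's inequality applied to the nonnegative random variable $e^{-tX}$ gives
\[\mathbb{P}(X < (1-\delta)\mu) \le e^{t(1-\delta)\mu}\,\mathbb{E}[e^{-tX}].\]

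Next, I would use independence of the $X_i$ together with the elementary inequality $1+x \le e^x$ to bound the moment generating function:
\[\mathbb{E}[e^{-tX}] = (1 - p + pe^{-t})^n \le \exp\bigl(np(e^{-t} - 1)\bigr) = \exp\bigl(\mu(e^{-t} - 1)\bigr),\]
so the previous display is at most $\exp\bigl(\mu(e^{-t} - 1) + t(1-\delta)\mu\bigr)$. The remaining step is to optimise the free parameter $t$: differentiating the exponent in $t$ and setting the derivative to zero yields $e^{-t} = 1-\delta$, i.e.\ $t = -\ln(1-\delta)$, which is positive since $0 < \delta < 1$. Substituting back collapses the exponent to $-\delta\mu - (1-\delta)\mu\ln(1-\delta)$, whose exponential is precisely $\bigl(e^{-\delta}/(1-\delta)^{1-\delta}\bigr)^\mu$, as required.

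There is no real conceptual obstacle in this argument, and indeed the paper cites the result rather than proving it. The only minor care required is in verifying strictness of the inequality: this follows from the fact that $1+x < e^x$ for $x \ne 0$ whenever $p \in (0,1)$, together with direct inspection of the degenerate cases $p \in \{0, 1\}$.
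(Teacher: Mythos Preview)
Your argument is correct and is precisely the standard exponential-moment proof one would find in the cited reference; the paper itself does not supply a proof but simply quotes the bound with a reference, as you note. There is nothing to compare.
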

	
	For completeness, we also recall the classification theorem of surfaces; see~\citep{toptext,zip}, for example.
	
	\begin{theorem}\label{classify}
		Every surface is homeomorphic to either the sphere $\ss^2$, a connected sum of finitely many tori, or a connected sum of finitely many real projective planes. \qed
	\end{theorem}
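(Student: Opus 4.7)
The plan is to follow the classical triangulation-and-reduction strategy, in the spirit of the ``zip proof'' of Conway and also the standard approach in \citep{toptext, zip}. The first step is to invoke Rad\'o's theorem, which guarantees that every (topological) surface admits a triangulation, i.e., is homeomorphic to a simplicial $2$-complex whose underlying space is a $2$-manifold. This reduces the problem from a purely topological one to a combinatorial one: we may assume our surface $\SS$ is given as a finite simplicial complex in which every edge lies in exactly two facets (since $\SS$ is closed) and the link of every vertex is a cycle (since $\SS$ is a manifold).

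Given this triangulation, I would pick a spanning tree of the dual graph and cut $\SS$ along all the edges not crossed by this spanning tree. This produces a single polygon $P$ whose boundary consists of an even number of edges, together with a pairing of these boundary edges that recovers $\SS$ upon gluing. Encoding this pairing as a cyclic word $w$ in letters $a_i^{\pm 1}$, the task becomes to show that $w$ can be transformed, by a sequence of moves that do not change the homeomorphism class of the quotient, into one of the standard words $aa^{-1}$ (the sphere), $a_1b_1a_1^{-1}b_1^{-1}\cdots a_gb_ga_g^{-1}b_g^{-1}$ (orientable genus $g$), or $a_1a_1a_2a_2\cdots a_ka_k$ (non-orientable genus $k$). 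The key moves are cutting along a chord of $P$ and re-gluing along a paired edge; these correspond exactly to the elementary polygonal transformations.

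The core combinatorial reduction then proceeds in stages: first, adjacent cancellable pairs $\cdots aa^{-1}\cdots$ are removed (unless the word has length $2$, giving the sphere); next, all vertex classes of the polygon are amalgamated into a single class by suitable cut-and-paste; then, any letter appearing twice with the same orientation $\cdots a\cdots a\cdots$ is brought together to form a crosscap $\cdots aa \cdots$; and finally, any letter $a$ with oppositely-oriented occurrences is paired with another such letter $b$ (which must exist, by a linking argument) and the pair is reshaped into a commutator $aba^{-1}b^{-1}$. A separate lemma, whose proof is the classical identity between a projective plane connected-summed with a torus and three projective planes, is then used to convert mixed words into the pure orientable or pure non-orientable normal form.

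The main obstacle is not any single step of the polygonal reduction, which is essentially mechanical, but the initial passage from a topological surface to a combinatorial object: Rad\'o's triangulability theorem is genuinely non-trivial and relies on the Jordan curve theorem plus a careful exhaustion argument. Once triangulability is granted, the remaining work is finite and algorithmic, and the verification that the three normal forms are pairwise non-homeomorphic can be handled by computing the fundamental group or, more elementarily, by computing the Euler characteristic together with orientability, both of which are invariants of the combinatorial moves used in the reduction.
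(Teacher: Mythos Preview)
Your outline is a correct sketch of the classical proof of the classification of surfaces, following precisely the triangulation-and-reduction route in the references \cite{toptext, zip} that the paper itself cites. However, the paper does not prove this theorem at all: Theorem~\ref{classify} is stated with an immediate \qedsymbol\ and no argument, as a background result imported from the literature. So there is no proof in the paper to compare your proposal against; the authors simply quote the classification as a known fact and use it as a black box in the proof of Theorem~\ref{mainthm} to reduce an arbitrary surface to a connected sum of tori or projective planes.
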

	
	Finally, we need some notation for dealing with hierarchies of constants in our proofs. We shall say that a statement holds `for $\delta \ll \e$' if for any fixed $\e \in (0,1]$, there exists $\delta_\e \in (0,1]$ such that the statement in question holds for all $\delta \in (0, \delta_\e]$. Hierarchies with more constants are defined analogously and are to be read from the right to the left.
	
	To avoid clutter, we shall frequently drop the subscript specifying the graph or $3$-graph in the notation above when the graph or $3$-graph in question is clear, abbreviating, for example, $E_G(X,Y)$ by $E(X, Y)$ or $L_\HH(v)$ by $L(v)$. Additionally, we systematically omit floors and ceilings whenever they are not crucial.
	
	\section{Overview of our strategy}\label{s:outline}
	Let us briefly discuss our approach to establishing Theorem~\ref{mainthm}. To begin with, we discuss finding spanning copies of the sphere since this captures most of the difficulties involved. We shall then say a few words about how we find spanning copies of a general surface.
	
	It is not hard to see that a $3$-graph $\HH$ on $n$ vertices with $\codeg(\HH) \gtrsim 3n/4$ contains a spanning copy of the sphere. This follows from Dirac's theorem; indeed, fix a pair of vertices of $\HH$, say $x$ and $y$, and consider a graph $G$ on $V(\HH) \setminus \{x,y\}$ where two vertices $u$ and $v$ are joined if $x, y \in N_\HH(u,v)$, i.e., if $uvx, uvy \in E(\HH)$. It is easy to verify that $\mindeg (G) \gtrsim n/2$, so it follows from Dirac's theorem that $G$ contains a Hamiltonian cycle. Of course, a Hamiltonian cycle in $G$ translates back to a `spanning double pyramid' in $\HH$ (see Figure~\ref{fig:doubpyr}), and this is of course a spanning copy of the sphere in $\HH$.
	
	\begin{figure}
		\begin{center}
			\begin{tikzpicture}[scale = 0.9]
			
			\node (v1) at (-4, 0) [inner sep=0mm, circle, draw=black!100, minimum size=0.6cm] {$v_1$};
			\node (v2) at (-2, 0) [inner sep=0mm, circle, draw=black!100,minimum size=0.6cm] {$v_2$};
			
			\node (v3) at (2, 0) [inner sep=0mm, circle, draw=black!100, minimum size=0.6cm] {$v_k$};
			\node (v4) at (4, 0) [inner sep=0mm, circle, draw=black!100, minimum size=0.6cm] {$v_1$};
			
			\node (x) at (0, 3) [inner sep=0mm, circle, draw=black!100, minimum size=0.6cm] {$x$};
			\node (y) at (0, -3) [inner sep=0mm, circle, draw=black!100, minimum size=0.6cm] {$y$};
			\draw [thick] (v1) -- (v2) -- (x) -- (v1) -- (y) -- (v2);
			\draw [thick] (v3) -- (v4) -- (x) -- (v3) -- (y) -- (v4);
			\draw [thick, loosely dotted] (v2) -- (v3);
			\end{tikzpicture}
		\end{center}
		\caption{A double pyramid with apexes $x$ and $y$.}\label{fig:doubpyr}
	\end{figure}
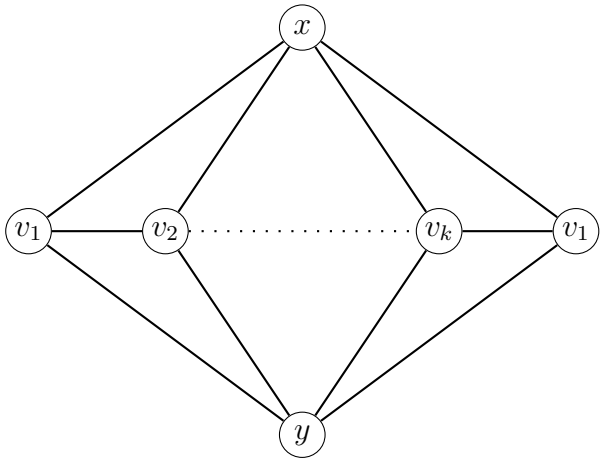
	
	While the above argument is not particularly efficient, it does contain the following useful idea: it is easy to find reasonably large spheres in any dense $3$-graph. Indeed, if we repeat the argument above in a dense $3$-graph $\HH$, then the auxiliary graph $G_{x,y}$ that we construct will be dense for most pairs of vertices $x, y \in V(\HH)$; consequently, $G_{x,y}$ will typically contain long cycles, and these translate back into large double pyramids in $\HH$. This idea may be used to show that a $3$-graph $\HH$ on $n$ vertices with $\codeg(\HH) \gtrsim 2n/3$ contains a spanning copy of the sphere: in such a graph $\HH$, every edge is in a tetrahedron, and we may use this to build `absorbing structures' in the spirit of R\"odl, Ruci\'nski and Szemer\'edi~\citep{rodl-1} that may be used to combine a small number of large spheres that almost span the vertex set into a single sphere spanning the vertex set.
	
	Approaches based on the ideas discussed above however reach a natural barrier at the threshold of $n/2$. Indeed, above this threshold, all the edges of a $3$-graph under consideration necessarily belong to a single tight component, but this is no longer true below this threshold, as shown by the following proposition.
	
	\begin{proposition}\label{toycomp}
		If $\HH$ is a $3$-graph on $n$ vertices with $\codeg(\HH) > (n-3)/2$, then all the edges of $\HH$ belong to a single tight component. Moreover, for each $n \in \N$, there exists a $3$-graph $\HH$ on $n$ vertices with $\codeg(\HH) = \lfloor (n-3)/2 \rfloor$ whose edge set decomposes into two tight components.
	\end{proposition}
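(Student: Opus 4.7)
The plan is to prove the two halves of the proposition separately, with the key tool for the first half being the link graph.

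For the first half, fix $x \in V(\HH)$ and consider the link graph $L_\HH(x)$ on the $n-1$ vertices $V(\HH)\setminus\{x\}$. Every vertex $y\ne x$ has degree $|N_\HH(x,y)|\ge\codeg(\HH)>(n-3)/2$ in $L_\HH(x)$, so since this degree is an integer it is at least $\lceil(n-2)/2\rceil$. A standard two-components argument (each component would contain at least $\lceil(n-2)/2\rceil+1$ vertices, for a total of at least $n>n-1$) then shows that $L_\HH(x)$ is connected. Now two edges $\{x,a,b\}$ and $\{x,c,d\}$ of $\HH$ through $x$ touch precisely when $\{a,b\}$ and $\{c,d\}$ share a vertex in $L_\HH(x)$, so since $L_\HH(x)$ is connected any two of its edges are joined by a chain of edges sharing consecutive vertices, placing all edges of $\HH$ through $x$ in a single tight component. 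To extend to arbitrary $e_1,e_2\in E(\HH)$, pick $x\in e_1$ and $u\in e_2$; if $x=u$ we are done, and otherwise $\codeg(\HH)>0$ yields an edge $e_3$ containing both $x$ and $u$, which the previous observation (applied at $x$ and at $u$) places in the same tight component as both $e_1$ and $e_2$.

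For the second half, take disjoint sets $A, B$ with $|A|=\lceil n/2\rceil$ and $|B|=\lfloor n/2\rfloor$, and let $E(\HH)$ consist of all triples meeting $A$ in an odd number of vertices, split into $T_1$ (three vertices in $A$) and $T_2$ (one vertex in $A$, two in $B$). A direct case-check gives that pairs inside $A$ have codegree $|A|-2$, pairs inside $B$ have codegree $|A|$, and mixed pairs have codegree $|B|-1$; thus $\codeg(\HH)=\min(|A|-2,|B|-1)=\lfloor(n-3)/2\rfloor$.

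To verify that $T_1$ and $T_2$ are the two tight components, note that a $T_1$-edge lies entirely in $A$ while a $T_2$-edge meets $A$ in only one vertex, so the two types share at most one vertex and cannot touch. Within each type, the required chains are easy: $T_1=A^{(3)}$ forms a single tight component whenever $|A|\ge 3$, and within $T_2$ one moves from $(x,\{y,z\})$ to $(x',\{y',z'\})$ by first replacing $x$ by $x'$ (sharing the $B$-pair $\{y,z\}$) and then exchanging one $B$-vertex at a time while keeping $x'$ fixed. The principal technicality here is the tightness of the degree threshold in the first half, which matches precisely the parameter realised by the construction; a handful of small $n$ in the second half can be addressed by ad hoc modifications.
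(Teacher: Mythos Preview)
Your proof is correct. The construction in the second half is precisely the paper's (with your $A$ playing the role of the paper's larger class $Y$), and both you and the paper gloss over the degenerate very small values of $n$.

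For the first half your argument takes a somewhat different route from the paper's. You show that each link graph $L_\HH(x)$ is connected via its minimum-degree bound, deduce that all edges of $\HH$ through a fixed vertex lie in one tight component, and then glue across vertices using an auxiliary edge $e_3\ni x,u$. The paper instead colours each pair $\{x,y\}$ of vertices by the tight component containing all edges of $\HH$ through that pair (well-defined since any two such edges touch along $\{x,y\}$), and argues directly that two pairs $xy$, $xz$ sharing a vertex must receive the same colour: otherwise $z\notin N_\HH(x,y)$, $y\notin N_\HH(x,z)$ and $N_\HH(x,y)\cap N_\HH(x,z)=\emptyset$, forcing $2\codeg(\HH)\le n-3$. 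Connectivity of the complete graph then yields a single colour class. The paper's packaging is slightly shorter and absorbs the gluing step into connectivity of $K_n$, whereas your link-graph argument makes the connection with Dirac-type degree thresholds more explicit; both are elementary and either would serve.
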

	\begin{proof}
		First, let $\HH$ be a $3$-graph on $[n]$ with $\codeg(\HH) > (n-3)/2$. Let us induce an edge-colouring of the complete graph on $[n]$ by setting the colour of an edge $e$ of the complete graph to be the tight component corresponding to all the edges of $\HH$ containing $e$; that this colouring is well-defined follows immediately from the definition of a tight component. Now, observe that no two edges of the complete graph incident to the same vertex can be coloured differently; indeed, if $xy$ and $xz$ constitute such a pair of edges for some $x,y,z\in[n]$, then it is clear that $z \notin N_\HH(x,y)$, $y \notin N_\HH(x,z)$ and $N_\HH(x,y)\cap N_\HH(x,z) = \emptyset$, which leads to an easy contradiction since $\codeg(\HH) > (n-3)/2$. It follows instantly that this induced edge-colouring of the complete graph uses only one colour since the complete graph is connected (and every edge is coloured). Therefore, all the edges of $\HH$ belong to a single tight component, proving the first part of the claim.
		
		Next, given $n \in \N$, let $X$ and $Y$ be two disjoint sets of $\lfloor n/2 \rfloor$ and $\lceil n/2 \rceil$ vertices respectively, and let $\HH$ be the $3$-graph on the vertex set $X \cup Y$ whose edge set consists of all the triples meeting $Y$ in an odd number of vertices. It is easy to see that $\codeg(\HH) = \lfloor (n-3)/2 \rfloor$, and it is not hard to verify that the edge set of $\HH$ decomposes into two tight components, one consisting of all the triples meeting $Y$ in three vertices and the other consisting of all the triples meeting $Y$ in one vertex. This construction proves the second part of the claim.
	\end{proof}
	To get down to the threshold of $n/3$, we need to work somewhat harder. As remarked earlier, it is clear that only those edges from a spanning tight component of a $3$-graph $\HH$ are of any use in constructing a spanning copy of the sphere in $\HH$. An argument similar to the one used to prove Proposition~\ref{toycomp} shows that a $3$-graph $\HH$ on $n$ vertices whose minimum codegree exceeds $n/3$ contains a spanning tight component. However, to prove Theorem~\ref{mainthm}, it will be necessary to say something more about the structure of, and the interaction between, the tight components of a $3$-graph $\HH$ on $n$ vertices with $\codeg(\HH) \gtrsim n/3$.
	
	First, we shall demonstrate that at least one of the spanning tight components of such a $3$-graph has reasonably good `connectibility properties'; this will help us in forming connected sums of smaller surfaces that we will build over the course of our proof. Next, we shall show that it is possible to find a small number of spheres that almost span the vertex set, while crucially ensuring that these spheres meet sufficiently many edges with the good connectibility properties. Finally, we shall use the fact that the $3$-graphs $\TT$ and $\PP$, which respectively represent the torus and the real projective plane, are degenerate to either add handles or crosscaps as needed. In order to implement these ideas, we shall rely on a number of techniques from extremal and probabilistic combinatorics, including those of absorption, regularity and supersaturation.
	
	\section{Proof of the main result}\label{s:proof}
	
	We shall divide the proof of Theorem~\ref{mainthm} into stages, each addressing different aspects of the strategy outlined in the previous section.
	
	Let us point out two notational conventions that we adopt. Given a subgraph $\GG$ of a $3$-graph $\HH$, the subset of the vertex set $V(\HH)$ spanned by $\GG$ is denoted by $V(\GG)$; in the case where $\GG$ consists of a single edge $e \in E(\HH)$, we abuse notation slightly and write $V(e)$ for $V(\GG)$. To avoid clutter, we will also talk about `spheres in a $3$-graph' when, strictly speaking, we mean `copies of the sphere $\ss^2$ in a $3$-graph', and similarly for the torus and the real projective plane.
	
	We shall make use of the following simple averaging lemma at several points.
	
	\begin{lemma}\label{averaging}For any $r\in \mathbb N$ and $\gamma\in(0,1)$, the following holds for all sufficiently large $m\in\mathbb N$. For any finite set $\mathcal X$, and any system $X_1,\dots,X_m$ of subsets of $\mathcal X$ satisfying \[\sum_{i=1}^m|X_i|\ge \gamma m|\mathcal X|,\]
		we may find a set $K\subset[m]$ with $|K|=r$ such that
		\[\left|\bigcap_{i\in K}X_i\right|\ge\gamma^r|\mathcal X|/2.\]
	\end{lemma}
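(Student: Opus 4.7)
The plan is a straightforward double counting together with a convexity estimate; there is no real obstacle, only a bookkeeping check that the prefactor $1/2$ is achievable for large $m$.

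First, for each $x \in \mathcal{X}$, let $d(x) = |\{i \in [m] : x \in X_i\}|$. The hypothesis $\sum_i |X_i| \ge \gamma m |\mathcal{X}|$ rewrites as $\sum_x d(x) \ge \gamma m |\mathcal{X}|$, so the average value of $d(x)$ is at least $\gamma m$. I will estimate
\[\sum_{K \in [m]^{(r)}} \left|\bigcap_{i \in K} X_i\right| = \sum_{x \in \mathcal{X}} \binom{d(x)}{r},\]
obtained by swapping the order of summation: a fixed $x$ contributes $1$ to the left side for precisely those $K$'s of size $r$ with $K \subset \{i : x \in X_i\}$, of which there are $\binom{d(x)}{r}$.

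Next, the function $t \mapsto \binom{t}{r}$ is convex on the nonnegative integers (its discrete second difference is $\binom{t}{r-2} \ge 0$), so by a discrete Jensen inequality (equivalently, by a standard majorisation argument comparing the sequence $d(\cdot)$ to the nearly-constant one with value $\lfloor \gamma m\rfloor$ or $\lceil \gamma m\rceil$),
\[\sum_{x \in \mathcal{X}} \binom{d(x)}{r} \;\ge\; |\mathcal{X}|\binom{\lfloor \gamma m\rfloor}{r}.\]
Dividing by $\binom{m}{r}$, the average of $\big|\bigcap_{i \in K} X_i\big|$ over all $K \in [m]^{(r)}$ is at least $|\mathcal{X}| \binom{\lfloor \gamma m\rfloor}{r}/\binom{m}{r}$.

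Finally, both $\binom{\lfloor \gamma m\rfloor}{r}$ and $\binom{m}{r}$ are polynomials in $m$ of degree $r$ with leading coefficients $\gamma^r/r!$ and $1/r!$ respectively, so their ratio tends to $\gamma^r$ as $m \to \infty$ with $\gamma$ and $r$ fixed. Hence, for all sufficiently large $m$ (depending only on $\gamma$ and $r$), this ratio exceeds $\gamma^r/2$, and the average value of $\big|\bigcap_{i \in K} X_i\big|$ is at least $\gamma^r |\mathcal{X}|/2$. Some $K$ must meet or exceed the average, yielding the desired set of indices.
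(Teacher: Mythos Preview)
Your proof is correct and follows essentially the same approach as the paper's: the paper phrases the same computation probabilistically (choosing $K$ uniformly at random and computing the expectation of $|\bigcap_{i\in K}X_i|$), applies Jensen's inequality to $\binom{f(x)}{r}$, and then notes that $\binom{\gamma m}{r}/\binom{m}{r}\to\gamma^r$. The only cosmetic difference is that the paper uses the polynomial extension $\binom{\gamma m}{r}$ in place of your $\binom{\lfloor\gamma m\rfloor}{r}$.
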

	\begin{proof}Choose a random subset $K\subset[m]$ of size $r$. For each $x\in\mathcal X$, $x$ belongs to $\cap_{i\in K}X_i$ with probability $\binom{f(x)}r\big/\binom mr$, where $f(x)$ is the number of indices $i\in[m]$ such that $x\in X_i$. By Jensen's inequality, we have
		\[\mathbb E\left(\left|\bigcap_{i\in K}X_i\right|\right)\geq|\mathcal X|\binom{\gamma m}r \binom mr^{-1}\]
		provided that $\gamma m>r$. It is now straightforward to prove that the claim holds for all sufficiently large $m \in \N$.\end{proof}
	
	\subsection{Double pyramids in dense 3-graphs}
	Recall that a $3$-graph is said to be \emph{degenerate} or \emph{$3$-partite} if its vertex set may be partitioned into three classes in such a way that each edge meets all three classes. The following lemmas make precise our earlier observation that it is easy to find large spheres in any dense $3$-graph.
	
	\begin{lemma}\label{find-sphere}Let $1/n\ll \delta, 1/k\ll 1$, let $\HH$ be a $3$-graph on $n$ vertices, and let $T\subset V(\HH)$ meet at least $\delta n^3$ edges of $\HH$. Then $\HH$ contains a double pyramid on $2k+2$ vertices with apexes in $T$.\end{lemma}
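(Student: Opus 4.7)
The plan is to locate two apex vertices $x, y \in T$ whose common link is dense, and then extract a long even cycle from this common link to serve as the equator of a double pyramid.

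First, I would count edges: each edge of $\HH$ meeting $T$ contributes at least $1$ to the sum $\sum_{x \in T} \Deg_\HH(x)$, so this sum is at least $\delta n^3$. Setting $\mathcal{X} = V(\HH)^{(2)}$ and $X_x = E(L_\HH(x))$ for each $x \in T$, I would apply Lemma~\ref{averaging} with $r = 2$ and $\gamma = \delta$; the hypothesis $\sum_{x \in T} |X_x| \ge \gamma |T| |\mathcal{X}|$ holds because $|T| \le n$ and $|\mathcal{X}| \le n^2/2$. (Note also that $|T| \ge 2\delta n$ because every vertex lies in at most $\binom{n-1}{2}$ edges, so $|T|$ is large enough for the lemma to apply when $n$ is large.) This produces distinct $x, y \in T$ such that the common link $G := L_\HH(x) \cap L_\HH(y)$ on $V(\HH) \setminus \{x,y\}$ satisfies $|E(G)| \ge \delta^2 |\mathcal{X}|/2 \ge \delta^2 n^2/8$.

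Next, I would invoke Theorem~\ref{kov} applied to the graph $G$. The cycle $\CC_{2k}$ is bipartite and therefore degenerate as a $2$-graph, so Theorem~\ref{kov} supplies a constant $c = c(k) > 0$ such that any $m$-vertex graph with at least $m^{2-c}$ edges contains a copy of $\CC_{2k}$. Since $1/n \ll \delta, 1/k$, for $n$ large enough we have $\delta^2 n^2/8 \ge (n-2)^{2-c}$, and so $G$ contains a cycle $v_1 v_2 \cdots v_{2k} v_1$ of length $2k$.

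Finally, by the definition of $G$, each edge $v_i v_{i+1}$ of this cycle satisfies $\{x, v_i, v_{i+1}\}, \{y, v_i, v_{i+1}\} \in E(\HH)$ (indices mod $2k$). These $4k$ triangular facets together with the equatorial $2k$-cycle constitute a double pyramid on the $2k + 2$ vertices $\{x, y, v_1, \ldots, v_{2k}\}$, with both apexes in $T$, as required. The only genuine bookkeeping point — the reason we weight the averaging lemma by $x \in T$ rather than just pigeonholing over all of $V(\HH)^2$ — is to guarantee that the two apex vertices lie in $T$; beyond that, the argument is a direct combination of averaging and the degenerate Turán-type bound.
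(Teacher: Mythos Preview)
Your proof is correct and follows essentially the same approach as the paper: apply the averaging lemma (Lemma~\ref{averaging}) with $r=2$ over the link graphs of vertices in $T$ to find two apexes $x,y\in T$ whose common link is dense, then invoke Theorem~\ref{kov} for the bipartite cycle $\CC_{2k}$ to extract the equator. The only cosmetic difference is that the paper takes $\mathcal{X}=V(\HH)^2$ (ordered pairs) rather than $V(\HH)^{(2)}$, leading to slightly different constants; your verification that $|T|$ is large enough for Lemma~\ref{averaging} to apply is a detail the paper leaves implicit.
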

	\begin{proof}
		For each vertex $v\in T$, recall that  $L(v)$ denotes the graph on $V(\HH) \setminus \{v\}$ with $xy\in E(L(v))$ whenever $vxy\in E(\HH)$. Applying Lemma~\ref{averaging} to these graphs  ---  with $r=2$ and $\mathcal X=V(\HH)^2$  ---  we can find a pair of vertices $v,w \in T$ such that there are at least $\delta^2 n^2/2$ edges common to $L(v)$ and $L(w)$. By Theorem~\ref{kov}  ---  applied with $r=2$  ---  we may find a copy of the cycle $\CC_{2k}$ among these common edges. Such a cycle yields a double pyramid in $\HH$ with apexes $v$ and $w$ and $2k+2$ vertices in total.
	\end{proof}
	
	\begin{lemma}\label{doubpy}
		Let $1/n\ll \delta,\e,\eta \ll 1$, and let $\HH$ be a $3$-partite $3$-graph with vertex classes $A$, $B$ and $T$ with $|A|=|B|=n$ and $|T|\ge \e n$ such that for each $A'\subset A$ and $B'\subset B$ with $|A'|,|B'|\ge \e n$, there are at least $\delta\abs{A'}\abs{B'}\abs{T}$ edges in $\HH[T\cup A'\cup B']$. Then $\HH$ contains at most $\eta n$ disjoint spheres that each have $2$ vertices in $T$ and together cover at least $(1-\e)n$ vertices from each of $A$ and $B$.
	\end{lemma}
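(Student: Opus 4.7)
The plan is to greedily extract double pyramids (i.e., spheres with their two apexes in $T$) by iterated application of Lemma~\ref{find-sphere}. Since the conclusion is monotone in $\eta$ (having at most $\eta' n$ disjoint spheres implies having at most $\eta n$ disjoint spheres whenever $\eta' \le \eta$), I would first pass without loss of generality to the strengthened hierarchy $1/n \ll \eta \ll \delta,\e \ll 1$ by replacing $\eta$ with $\min(\eta,\delta\e^3/4)$, and then fix $k = \lceil 2/\eta \rceil$.

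Setting $A_0 = A$, $B_0 = B$, $T_0 = T$, at each step $j \ge 0$, as long as $\min(|A_j|,|B_j|) \ge \e n$, I would apply Lemma~\ref{find-sphere} to the induced sub-$3$-graph $\HH_j = \HH[T_j \cup A_j \cup B_j]$, with the set $T_j$ playing the role of $T$ in that lemma, to extract a double pyramid $S_j$ on $2k+2$ vertices. Its two apexes lie in $T_j$, and its equator $\CC_{2k}$ alternates between $A_j$ and $B_j$ by $3$-partiteness, contributing $k$ vertices to each side. I would then form $A_{j+1},B_{j+1},T_{j+1}$ by deleting the vertices of $S_j$ from $A_j,B_j,T_j$. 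Since each step removes exactly $k$ vertices from each of $A_j$ and $B_j$, the procedure terminates after at most $(1-\e)n/k + 1 \le \eta n$ steps, and by construction the extracted spheres are pairwise vertex-disjoint and together cover more than $(1-\e)n$ vertices of each of $A$ and $B$.

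The one step requiring care is verifying that the density hypothesis of Lemma~\ref{find-sphere} is preserved as the iteration progresses. By the given hypothesis applied with $A' = A_j$ and $B' = B_j$, the $3$-graph $\HH[T \cup A_j \cup B_j]$ contains at least $\delta|A_j||B_j||T| \ge \delta\e^3 n^3$ edges. At most $2j \le 2\eta n$ vertices have been removed from $T$ so far, and each such vertex lies in at most $n^2$ edges of $\HH$, so at most $2\eta n^3$ edges are lost when passing to $\HH_j$. Under the strengthened hierarchy, $\HH_j$ therefore still contains at least $(\delta\e^3/2)n^3$ edges, which is a positive constant fraction of $|V(\HH_j)|^3 \le 27n^3$; since every edge of $\HH_j$ meets $T_j$ by $3$-partiteness, Lemma~\ref{find-sphere} indeed supplies the double pyramid $S_j$. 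The main obstacle is precisely this density bookkeeping, and it is what motivates the initial reduction to $\eta \ll \delta,\e$; everything else is a straightforward greedy argument.
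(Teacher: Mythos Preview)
Your proof is correct and follows essentially the same approach as the paper's: both greedily extract double pyramids via Lemma~\ref{find-sphere}, choosing the pyramid size $k$ large enough that at most $\eta n$ spheres are removed and verifying that the density hypothesis survives the loss of $O(n/k)$ apex vertices from $T$. The only cosmetic differences are that the paper takes $1/k \ll \delta,\e,\eta$ directly (rather than first reducing $\eta$ and setting $k = \lceil 2/\eta\rceil$) and phrases the termination as a contradiction, while you iterate explicitly; neither difference affects the substance of the argument.
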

	
	\begin{proof} Fix an integer $k$ such that  $1/n\ll 1/k \ll \delta,\e,\eta$. Iteratively, remove spheres $S$ from $\HH$ with $|V(S)\cap A|=|V(S)\cap B|\ge k$ and $|V(S)\cap T|=2$, until it is not possible to find another sphere with these properties. This removes at most $\eta n$ disjoint spheres. Suppose that we are left with $A'\subset A$, $B'\subset B$ and $T'\subset T$.
		
		We will show that $|A'| = |B'| < \e n$, completing the proof. Suppose for the sake of contradiction that $|A'|=|B'|\ge \e n$. Note that we have \[|T'|\ge |T| - 2n/k \ge (1-\delta/2)|T|,\] so there are at least $\delta\abs{A'}\abs{B'}\abs{T}/2$ edges in $\HH[T'\cup A'\cup B']$. Applying Lemma~\ref{find-sphere} to $\HH[T'\cup A'\cup B']$, we may find a double pyramid on $2k+2$ vertices with apexes in $T'$. Since $\HH$ is $3$-partite, the remaining $2k$ vertices are evenly divided between $A'$ and $B'$, yielding a contradiction.
	\end{proof}
	
	\subsection{Colouring and connecting edges}
	Next, we detail how we will connect spheres together into a larger sphere. In this section, we work towards showing that the edges in our $3$-graph can be (mostly) coloured red and green so that any disjoint edges of the same colour can be connected together into a sphere (and, moreover, in many different ways). Thus, two disjoint spheres which share some colour among their edges can be connected together into a larger sphere.
	
	More precisely, given an integer $k\in\N$ and a $3$-graph $\HH$ on $n$ vertices, we say two edges $e,f \in E(\HH)$ are \emph{$(\al,k)$-connectible} if, for some $l$ with $1\leq l\leq k$, there are at least $\al n^l$ sets $A\subset V(\HH)$ with $|A|=l$ for which there is a sphere in $\HH$ containing the edges $e$ and $f$ with vertex set $A\cup V(e)\cup V(f)$. This will be the most convenient definition to use in proving connectibility properties, however, we shall then only use the following simple consequence of $(\al,k)$-connectibility.
	
	\begin{lemma}\label{disjoint}
		If two edges $e,f\in E(\HH)$ are $(\al,k)$-connectible, then for some $l$ with $1\leq l\leq k$ there are at least $\al n/l$ pairwise disjoint sets $A\subset V(\HH)$ with $|A|=l$ for which there is a sphere with vertex set $A\cup V(e)\cup V(f)$ in $\HH$ containing the edges $e$ and $f$.
	\end{lemma}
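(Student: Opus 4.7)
The plan is to apply a straightforward greedy argument. Fix an $l$ with $1\le l\le k$ witnessing the $(\al,k)$-connectibility of $e$ and $f$, so that the family
\[
\mathcal F=\{A\subset V(\HH):|A|=l,\text{ there is a sphere with vertex set }A\cup V(e)\cup V(f)\text{ containing }e,f\}
\]
has size $|\mathcal F|\ge \al n^l$. We will iteratively extract pairwise disjoint sets from $\mathcal F$.

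More precisely, I would run the following procedure: repeatedly pick any $A\in\mathcal F$, add it to the collection being built, and then delete from $\mathcal F$ every set meeting $A$ (including $A$ itself). Each selected set $A$ has exactly $l$ vertices, and any fixed vertex $v\in V(\HH)$ belongs to at most $\binom{n-1}{l-1}\le n^{l-1}$ sets of $\mathcal F$ (simply as a trivial upper bound on the number of $l$-subsets of $V(\HH)$ containing $v$). Consequently a single deletion step removes at most $l\cdot n^{l-1}$ sets from $\mathcal F$. The procedure can therefore terminate only once $\mathcal F$ is empty, and if it terminates after $t$ steps we must have $\al n^l\le t\cdot l\cdot n^{l-1}$, i.e.\ $t\ge \al n/l$. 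The sets produced are pairwise disjoint by construction, and each of them, together with $V(e)\cup V(f)$, is the vertex set of a sphere in $\HH$ containing $e$ and $f$, which is what the lemma asks for.

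There is no real obstacle here beyond choosing the right $l$: the same $l$ that witnesses $(\al,k)$-connectibility also witnesses the conclusion. The argument is clean because the definition of $(\al,k)$-connectibility was phrased in terms of $\al n^l$ sets of a single size $l$, so a single greedy pass suffices; one does not need to split the count $\al n^l$ across different values of $l$ or worry about overcounting spheres.
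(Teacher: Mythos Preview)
Your proof is correct and essentially identical to the paper's: the paper also fixes an $l$ witnessing $(\al,k)$-connectibility, takes a maximal disjoint subfamily $A_1,\dots,A_r$ of $\mathcal F$, and observes that since $\cup_i A_i$ has $rl$ vertices and must meet every member of $\mathcal F$, one gets $\al n^l \le rl\, n^{l-1}$. Your greedy phrasing and the paper's maximal-collection phrasing are the same argument.
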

	\begin{proof}
		Let $l$ with $1\leq l\leq k$ be a number witnessing the fact that $e,f$ are $(\al,k)$-connectible. Choose a maximal collection $A_1,A_2,\dots, A_r$ of disjoint sets with the required properties and note that $U= \cup_{i=1}^r A_i$ comprises $rl$ vertices. By the maximality of our choice, $U$ intersects every set of vertices of size $l$ which can be used to create a sphere including $e$ and $f$. There are at least $\al n^l$ such sets, but at most $rln^{l-1}$ sets of size $l$ meet $U$, so we must have $rl\geq \al n$, as required.
	\end{proof}
	
	Next, we demonstrate that almost all touching pairs of edges in a dense $3$-graph are easily connectible.
	
	\begin{lemma}\label{touchpairs}
		Let $1/n \ll \delta \ll \e < 1$, and let $\HH$ be a $3$-graph on $n$ vertices. Then all but at most $\e n^4$ touching pairs of edges of $\HH$ are contained in at least $\delta n^2$ spheres with $6$ vertices in $\HH$.
	\end{lemma}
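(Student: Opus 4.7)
The plan is to translate the question into one about $4$-cycles in suitable auxiliary graphs, and then to invoke the Erd\H{o}s--Simonovits supersaturation estimate for $\CC_4$ from Proposition~\ref{c4supsat}.

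For each unordered pair of distinct vertices $\{c, d\} \subseteq V(\HH)$ I would introduce the \emph{joint link graph} $G_{cd}$ on the vertex set $V(\HH) \setminus \{c, d\}$, in which $uv$ is an edge precisely when both $cuv$ and $duv$ are edges of $\HH$. Touching pairs of $\HH$ are then in bijection with pairs $(\{c, d\}, \{a, b\})$ for which $\{a, b\} \in E(G_{cd})$: the pair $\{abc, abd\}$ touches along $\{a, b\}$, and the condition $abc, abd \in E(\HH)$ is equivalent to $ab \in E(G_{cd})$.

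The key observation is that any $4$-cycle $a$-$b$-$x$-$y$-$a$ in $G_{cd}$ through the edge $ab$ produces a $6$-vertex octahedron in $\HH$ whose $8$ facets are $abc, abd, acy, ady, bcx, bdx, cxy, dxy$: each edge of the $4$-cycle contributes two facets, one involving $c$ and one involving $d$. Since distinct $4$-cycles through $ab$ in $G_{cd}$ yield distinct octahedra --- each of which is a $6$-vertex sphere in $\HH$ containing both $abc$ and $abd$ --- it suffices to show that for all but at most $\e n^4$ of the pairs $(\{c, d\}, \{a, b\})$ with $ab \in E(G_{cd})$, the edge $ab$ is contained in at least $\delta n^2$ copies of $\CC_4$ in $G_{cd}$. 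Summing over the $\binom{n}{2}$ choices of $\{c, d\}$, this in turn reduces to the following purely graph-theoretic claim: in any graph $G$ on $n$ vertices, at most $\e n^2 / 2$ edges of $G$ lie in fewer than $\delta n^2$ copies of $\CC_4$ in $G$.

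I would prove this last claim by a routine supersaturation argument. Suppose for contradiction that the set $B$ of bad edges has $|B| > \e n^2 / 2$. Applying Proposition~\ref{c4supsat} to the graph $(V(G), B)$ produces at least $\beta n^4$ copies of $\CC_4$ lying entirely within $B$ for some $\beta = \beta(\e) > 0$; but every $\CC_4$ in $B$ is also a $\CC_4$ in $G$, so counting $(\text{edge}, \CC_4)$ incidences inside $B$ yields $4 \beta n^4 \leq |B| \cdot \delta n^2 \leq \delta n^4$, which is impossible once $\delta$ is chosen below $4\beta(\e)$, as is permitted by the hierarchy $\delta \ll \e$. I do not anticipate any serious obstacle beyond identifying the right octahedron structure so that each $4$-cycle in $G_{cd}$ ``doubles up'' into a sphere through $c$ and $d$; the supersaturation step is then standard.
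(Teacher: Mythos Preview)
Your proof is correct and follows essentially the same route as the paper: introduce the joint link graph $G_{cd}$, observe that $4$-cycles through $ab$ in $G_{cd}$ yield $6$-vertex octahedra containing the touching pair $\{abc,abd\}$, and then invoke $\CC_4$-supersaturation (Proposition~\ref{c4supsat}). The only difference is organisational: the paper pigeonholes to a single pair $\{x,y\}$ and applies supersaturation to all of $G_{xy}$, whereas you apply supersaturation directly to the subgraph $B$ of bad edges in each $G_{cd}$ and sum --- this is slightly cleaner, since it makes immediate that the edge found in many $4$-cycles is itself one of the bad edges.
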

	\begin{proof}
		We need to show that if $F$ is any set of $\e n^4$ touching edge-pairs of $\HH$, then at least one of these pairs is contained in at least $\delta n^2$ spheres with $6$ vertices. To this end, fix a set $F$ of $\e n^4$ touching edge-pairs of $\HH$. By averaging, there are two vertices $x,y \in V(\HH)$ such that $e\triangle f = \{x,y\}$ for at least $\e n^2$ edge-pairs $\{e,f\}\in F$, where $\triangle$ denotes the symmetric difference.
		
		Let $G=G_{x,y}$ be the graph on the vertex set $V(\HH)$ where we join two vertices $u$ and $v$ if $uvx,uvy\in E(\HH)$. As $|E(G)|\ge \e n^2$, we know from Proposition~\ref{c4supsat} that $G$ contains $\delta n^4$ copies of the $4$-cycle $\CC_4$. Consequently, there is some $uv\in E(G)$ such that there are at least $\delta n^2$ pairs $w,z \in V(\HH)$ such that the set $\{u, v, w, z\}$ induces a copy of $\CC_4$ in $G$ containing the edge $uv$. For any such a copy of $\CC_4$, note that $\{x,y,u,v,w,z\}$ is the vertex set of a sphere in $\HH$ containing the touching edge-pair $\{uvx,uvy\}$. Thus, we have shown that $F$ contains a pair $\{uvx, uvy\}$ that is contained in at least $\delta n^2$ spheres in $\HH$ with $6$ vertices, proving the claim.
	\end{proof}
	
	Our next (rather coarse) lemma develops the connectibility properties of dense $3$-graphs further.
	\begin{lemma}\label{firstcolour}
		Let $1/n \ll \al \ll \beta \ll \e \ll 1$, and let $\HH$ be a $3$-graph on $n$ vertices with $\codeg(\HH)\ge n/3$. Then all but at most $\e n^3$ edges of $\HH$ can be coloured so that
		\begin{enumerate}
			\item there are at least $\beta n^3/2$ edges of each colour, and
			\item any two disjoint edges of the same colour are $(\al,\kzero)$-connectible.
		\end{enumerate}
	\end{lemma}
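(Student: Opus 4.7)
The plan is to colour edges by their tight component in $\HH$ (after removing a small set of ``exceptional'' edges with pathological local structure), and then establish connectibility within each large colour class by chaining together the small spheres guaranteed by Lemma~\ref{touchpairs}.

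For the cleanup, I will fix $\delta' \ll \e' \ll \e$ and apply Lemma~\ref{touchpairs} to $\HH$: all but at most $\e' n^4$ touching pairs are \emph{good}, meaning they lie in at least $\delta' n^2$ spheres on six vertices. Since each edge lies in at most $3n$ touching pairs, a Markov-type averaging shows that all but at most $O(\sqrt{\e'}) n^3 \le \e n^3/2$ edges are in at most $\sqrt{\e'}\cdot 3n$ bad pairs; these ``exceptional'' edges are deleted to produce $\HH'$. For the tight-component structure, I will adapt the argument of Proposition~\ref{toycomp}: colour each pair $\{u,v\} \in V(\HH)^{(2)}$ by the tight component of any edge containing it (well-defined since $\codeg(\HH) \ge n/3 > 0$), and note that pairs of distinct colours incident to a common vertex $v$ have disjoint extending link-sets (each of size $\ge n/3$), so each vertex sees at most two colours. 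Since each tight component spans at least $n/3+2$ vertices, double counting yields at most $6$ tight components of $\HH$. Colour each edge of $\HH'$ by its tight component in $\HH$ and discard edges in colour classes containing fewer than $\beta n^3$ edges; this loses at most $6\beta n^3 \le \e n^3/2$ further edges, so overall at most $\e n^3$ edges are deleted and every remaining colour class contains at least $\beta n^3/2$ edges.

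For the connectibility assertion, I will fix disjoint edges $e, f$ of the same colour, lying in a large tight component $C$, and exhibit a chain $e = h_0, h_1, \dots, h_r = f$ of good touching pairs in $C$ of some bounded length $r \le 12$. For each consecutive pair $(h_{i-1}, h_i)$, I can select one of the $\ge \delta' n^2$ six-vertex spheres $S_i$ furnished by Lemma~\ref{touchpairs}, with the ``extra'' vertices chosen greedily so that they are pairwise disjoint across the $S_i$ and disjoint from $V(e) \cup V(f) \cup \bigcup_j V(h_j)$. The iterated connected sum $S_1 \# S_2 \# \cdots \# S_r$ along the intermediate triangles $h_1, \dots, h_{r-1}$ is then a sphere containing both $e$ and $f$ on $3r+3 \le 39$ vertices---i.e.\ with at most $3r-3\le\kzero$ extra vertices---and the $\delta' n^2$ freedom per step yields $\Omega(n^{2r})$ distinct such spheres, amply providing $(\alpha,\kzero)$-connectibility.

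The hard part will be justifying that the chain length $r$ can be taken to be an absolute constant. Tight components of a $3$-graph need not have bounded diameter in the touching metric a priori, so a structural argument specific to the $\codeg \ge n/3$ regime is required. Two disjoint edges admit no common touching neighbour (four vertices cannot fit in a single triple), so the shortest possible chain already has length~$3$; the task therefore reduces to showing that within each large tight component, a length-$3$ chain of good touches can always be found. I expect this to follow from case analysis on the codegrees of pairs $\{v,u\}$ with $v \in V(e), u \in V(f)$, arguing that at least one choice of intermediate edges $g_1 = \{v_1, v_2, u_1\}$, $g_2 = \{v_1, u_1, u_2\}$ (with $v_i \in V(e)$, $u_j \in V(f)$) must lie in $E(\HH')$, supplemented if necessary by using one or two auxiliary vertices to navigate around degenerate configurations.
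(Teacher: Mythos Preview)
Your approach differs substantially from the paper's, and the ``hard part'' you flag is a genuine gap that cannot be filled as you suggest.

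\textbf{The length-$3$ chain claim is false.} Take $V(\HH)=P\cup Q\cup R$ with $|P|=|Q|=|R|=n/3$, and let $E(\HH)$ consist of all triples \emph{except} those of type $PPQ$ or $PQQ$. One checks that $\codeg(\HH)=n/3$ (attained at $PQ$ pairs, whose neighbourhood is exactly $R$) and that $\HH$ has a single tight component. Now let $e\subset P$ and $f\subset Q$ be disjoint edges. Any length-$3$ touching chain $e,h_1,h_2,f$ is forced (by the disjointness of $V(e)$ and $V(f)$) to have $h_1$ of type $PPQ$ and $h_2$ of type $PQQ$, neither of which lies in $E(\HH)$. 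So no length-$3$ chain exists; the shortest chain here has length $4$ (e.g.\ via $PPR\to PQR\to QQR$). Your proposed case analysis on codegrees of pairs across $V(e)\cup V(f)$ therefore cannot succeed.

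\textbf{Your counting breaks for $r\ge 4$.} You assert $\Omega(n^{2r})$ spheres from the sphere-completion freedom alone, but the target is $\alpha n^{\,3r-3}$ sets $A$ of extra vertices. This only balances when $r=3$; for $r\ge 4$ the $r-3$ new intermediate vertices in the chain are fixed in your argument, so you are short by a factor of $n^{\,r-3}$. One would need to additionally vary the intermediate edges themselves over $\Omega(n^{\,r-3})$ choices, which you do not establish (and which would require, at minimum, a bound on the touching diameter together with a robustness statement guaranteeing many near-geodesic chains of good pairs).

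\textbf{How the paper avoids all this.} The paper does \emph{not} colour by tight component and does not bound touching diameter. Instead, it works with tight walks of length three: from any ordered edge $\mathbf f$ one reaches a set $A(\mathbf f)$ of $\ge n^3/54$ ordered edges, and after discarding the walks that hit a bad touching pair one still has $|B(\mathbf f)|\ge \eta n^3$ for most $\mathbf f$. This defines a dense bipartite graph between uncoloured edges and $E(\HH)$, and Proposition~\ref{bigdegsub} extracts a large subset $F''$ in which any two edges $f_1,f_2$ share many common neighbours via some intermediary $f_3\in F''$. The resulting chain $f_1\!-\!e_1\!-\!f_3\!-\!e_2\!-\!f_2$, together with the six-vertex spheres along each step, gives $(\alpha,33)$-connectibility \emph{by construction}, with all the required freedom coming from the choices of $f_3,e_1,e_2$ and the sphere completions. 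Colour $F''$ with a fresh colour and iterate. The colouring thus produced is not the tight-component colouring; indeed there may be many more than six colours at this stage (the merging into at most two colours happens only later, in Lemma~\ref{secondcolour} and Lemma~\ref{colourthm}).

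Your observation that $\HH$ has at most six tight components is correct and pleasant, but it does not, by itself, yield connectibility within each component.
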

	
	\begin{proof} Take $\delta, \eta > 0 $ so that $ \beta \ll\delta\ll\eta\ll\e$ and call a pair of touching edges \textit{good} if it can be completed into a sphere using two more vertices in at least $\delta n^2$ different ways and \textit{bad} otherwise. Note that by Lemma~\ref{touchpairs}, there are at most $\eta n^4$ bad pairs in $\HH$.
		
		We iteratively colour the edges of $\HH$ as follows. Suppose that we have a set $F \subset E(\HH)$ of at least $\e n^3$ uncoloured edges. Let us write $\mathbf{E}$ and $\mathbf{F}$  for the sets of ordered $3$-tuples $(x,y,z)$ such that $xyz \in E(\HH)$ and  $xyz \in F$ respectively. Given a $3$-tuple $\mathbf{f}=(v_1,v_2,v_3) \in \mathbf{F}$, we define
		\[A(\mathbf{f})=\{(x,y,z):v_2v_3x,v_3xy,xyz\in E(\HH)\},\]
		noting that \[|A(\mathbf{f})|\ge (n/3 - 1)(n/3 - 2)(n/3-3) \ge n^3/54.\] Clearly, there are at least $\e n^6/9$ ordered pairs $(\mathbf{f},\mathbf{e})$ of $3$-tuples with $\mathbf{f} \in \mathbf{F}$ and $\mathbf{e}\in A(\mathbf{f})$ since every edge of $F$ corresponds to $6$ different $3$-tuples in $\mathbf{F}$.
		
		For $\mathbf{f} \in \mathbf{F}$, let $B(\mathbf{f})\subset A(\mathbf{f})$ be the set of $3$-tuples $(x,y,z)\in A(\mathbf{f})$ such that the pairs $\{v_1v_2v_3,v_2v_3x\}$, $\{v_2v_3x,v_3xy\}$, and $\{v_3xy,xyz\}$ of touching edges are all good. Note that every bad pair $\{w'x'y',x'y'z'\}$ in $\HH$ arises as one of the three pairs corresponding to $(v_1,v_2,v_3,x,y,z)$ as above for at most $12n^2$ choices of this $6$-tuple. Therefore, there are at most $12\eta n^6$ ordered pairs $(\mathbf{f},\mathbf{e})$ with $\mathbf{f}\in \mathbf{F}$ and $\mathbf{e}\in A(\mathbf{f})\setminus B(\mathbf{f})$.
		
		Now, if $|B(\mathbf{f})|<\eta n^3$ for some $\mathbf{f} \in \mathbf{F}$, then $|A(\mathbf{f})\setminus B(\mathbf{f})| \ge (1/54-\eta)n^3$, so our previous observation implies that there are at most $12\eta n^3/(1/54-\eta)\le 3\e n^3$ different $3$-tuples $\mathbf{f} \in \mathbf{F}$ for which $|B(\mathbf{f})|<\eta n^3$. Hence, there are at least $6\e n^3 - 3\e n^3 \ge 3 \e n^3$ different $3$-tuples $\mathbf{f} \in \mathbf{F}$ for which $|B(\mathbf{f})| \ge \eta n^3$, and correspondingly, there is a set $F'\subset F$ of at least $\e n^3/2$ edges for which at least one of the six $3$-tuples corresponding to that edge has this property.
		
		Next, we create an auxiliary bipartite graph $G$ between $F'$ and $E(\HH)$ with an edge between $f \in F'$ and $e \in E(\HH)$ if some pair $\{\mathbf{f},\mathbf{e}\}$ of $3$-tuples corresponding to these edges of $\HH$ satisfies $\mathbf{e}\in B(\mathbf{f})$; note that $G$ has at least $\e \eta n^6/12$ edges. We appeal to Proposition~\ref{bigdegsub} and find a subset $F''\subset F'$ of size at least $\beta n^3$, such that for any two edges $f_1,f_2\in F''$ there are at least $\beta n^3/2$ edges $f_3\in F''$ with the property that the pairs $f_1,f_3$ and $f_2,f_3$ each have at least $\beta n^3$ common neighbours in $G$.
		
		Thus, for disjoint edges $f_1,f_2\in F''$, there are at least $\beta n^3/4$ edges $f_3\in F''$ disjoint from $f_1$ and $f_2$ so that the pairs $f_1,f_3$ and $f_2,f_3$ each have at least $\beta n^3$ common neighbours in $G$.
		For each such $f_3$, note that there are at least $(\beta n^3)^2/4$ choices of $e_1,e_2\in E(\HH)$ which are disjoint from each other and from $f_1,f_2$ and $f_3$ and such that $f_1e_1,e_1f_3,f_3e_2,e_2f_2\in E(G)$. For such a pair $e_1$ and $e_2$, there are at least $(\delta n^2)^3 - O(n^5)\geq \delta^3 n^6/2$ ways to find a sphere $S_1$ joining $f_1$ and $e_1$ using $6$ extra vertices and avoiding the vertices in $f_3,e_2$ and $f_2$.
		A similar argument for each of the edges $e_1f_3, f_3e_2, e_2f_2 \in E(G)$ in turn shows that we can find spheres $S_2$, $S_3$ and $S_4$ containing the pairs $e_1f_3$, $f_3e_2$ and $e_2f_2$ in turn, each with $6$ vertices in addition to those in the edge pair, and so that all the additional vertices are distinct and not in any of the edges $f_1,f_2,f_3,e_1$ and $e_2$. Furthermore, when choosing each sphere we have at least $\delta^3 n^6/2$ choices. Thus, we can choose such spheres in at least $\delta^{12}n^{24}/16$ different ways. Note that the union of $S_1,S_2,S_3$ and $S_4$ is a sphere containing $f_1$ and $f_2$ and $\kzero$ extra vertices.
		
		There were at least $\beta n^3/4$ choices of $f_3$ and $(\beta n^3)^2/4$ choices of $e_1$ and $e_2$, so thus in total we can find at least $\delta^{12}\beta^3n^{33}/256\geq \alpha n^{33}$ spheres containing $f_1$, $f_2$ and $33$ extra vertices. Thus, $f_1$ and $f_2$ are $(\al,\kzero)$-connectible. As $f_1$ and $f_2$ were arbitrary disjoint edges in $F''$, we can  colour all the edges in $F''$ using a new colour.
				
		It is clear that this procedure colours the edges of $\HH$ as required, thereby proving the lemma.
	\end{proof}
	
	The next observation refines the previous lemma, and is the starting point of our characterisation of the connectibility properties of the tight components of a sufficiently dense $3$-graph.
	
	\begin{lemma}\label{secondcolour}
		Let $1/n \ll \al, 1/k\ll \e \ll 1$, and let $\HH$ be a $3$-graph on $n$ vertices with $\codeg(\HH)\ge n/3$. Then all but at most $\e n^3$ edges of $\HH$ can be coloured so that any two disjoint edges of the same colour are $(\al,k)$-connectible and there are at most $\e n^4$ pairs of touching edges with different colours.
	\end{lemma}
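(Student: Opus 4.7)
The plan is to begin with the coloring provided by Lemma~\ref{firstcolour} and then merge color classes that interact heavily via touching pairs. Specifically, I would choose auxiliary parameters with $1/n \ll \al \ll \al_1 \ll \beta \ll \e_1 \ll \e$ and apply Lemma~\ref{firstcolour} to partition all but at most $\e_1 n^3$ edges of $\HH$ into classes $C_1, \ldots, C_r$, where $r \le 2/\beta$ is a constant and any two disjoint edges of the same class are $(\al_1, \kzero)$-connectible.

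Next, form an auxiliary graph $\Gamma$ on the initial color classes by placing an edge between $C_i$ and $C_j$ whenever there are at least $\e n^4 / r^2$ pairs of touching edges with one edge in $C_i$ and the other in $C_j$. Merge the classes in each connected component of $\Gamma$ into a single new color. For any two distinct new color classes, the touching pairs between them number at most $\binom{r}{2}\cdot(\e n^4/r^2)\le \e n^4$, giving the second conclusion.

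It remains to check the first conclusion. If disjoint $e,f$ lie in a common initial class, the required connectibility is inherited directly from Lemma~\ref{firstcolour}. Otherwise $e\in C_{i_0}$ and $f\in C_{i_s}$ lie in a common component of $\Gamma$, so there is a path $C_{i_0}, C_{i_1},\ldots,C_{i_s}$ in $\Gamma$ with $s\le r$. For each consecutive pair $(C_{i_a},C_{i_{a+1}})$, the abundance of touching pairs between them combined with Lemma~\ref{touchpairs} allows us to choose a touching pair $\{g_a,g_a'\}$ with $g_a\in C_{i_a}$ and $g_a'\in C_{i_{a+1}}$ that lies in at least $\delta n^2$ spheres on $6$ vertices, for some $\delta>0$. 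I would then concatenate: a sphere joining $e$ to $g_0$ via $(\al_1,\kzero)$-connectibility inside $C_{i_0}$, a bridging $6$-vertex sphere through $\{g_0,g_0'\}$, a sphere joining $g_0'$ to $g_1$ inside $C_{i_1}$, and so on, ending with a sphere joining $g_{s-1}'$ to $f$ inside $C_{i_s}$. Chaining these spheres along their shared facets exactly as in the proof of Lemma~\ref{firstcolour} yields a single sphere containing $e$ and $f$ with at most $k:=2r(\kzero+6)$ extra vertices.

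The main obstacle will be the quantitative bookkeeping for the chaining: at each step I must forbid the (constantly many) vertices already used in previous steps so that the spheres glue into a genuine sphere, and I must track that the number of valid choices remains $\Omega(n^{\mathrm{const}})$. Since $s\le r$ is a constant and each step forbids only $O(1)$ vertices while starting from polynomially many options, multiplying the counts along the chain gives at least $\al n^k$ sphere configurations extending $e,f$ once $\al$ is chosen sufficiently small in terms of $\al_1, \delta$ and $\beta$, establishing $(\al,k)$-connectibility.
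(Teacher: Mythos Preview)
Your proposal is correct and follows essentially the same approach as the paper. Both arguments start from the colouring of Lemma~\ref{firstcolour}, merge colour classes that share many touching pairs, bound the remaining bichromatic touching pairs by the merging threshold times the (constant) number of class-pairs, and establish connectibility in a merged class by bridging through good touching pairs supplied by Lemma~\ref{touchpairs}. The only organisational difference is that the paper merges one pair of colours at a time and proves inductively that after $i$ merges any two disjoint same-colour edges are $(\delta^{3^i},3^ik_0)$-connectible, whereas you merge each $\Gamma$-component in one shot and chain along a path in $\Gamma$ through the original classes; these are two packagings of the same idea.

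Two small points to tighten in your write-up. First, the phrase ``for any two distinct new colour classes, the touching pairs between them number at most $\binom{r}{2}\cdot(\e n^4/r^2)$'' should be stated as a bound on the \emph{total} number of bichromatic touching pairs (summing over all non-adjacent pairs $(C_i,C_j)$ in $\Gamma$), which is what you actually need. Second, in the bookkeeping you flag at the end, note that $(\al_1,\kzero)$-connectibility only guarantees $\al_1 n^{l}$ completing sets for \emph{some} $l\le\kzero$, and this $l$ may vary from link to link in your chain; you will need a pigeonhole step (as the paper does) to fix all these lengths before multiplying the counts, so that the final configurations have a definite size and you can legitimately conclude $(\al,k)$-connectibility.
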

	\begin{proof} Take $\delta, \eta > 0 $ so that $\al,1/k\ll\delta \ll \eta \ll\e$ and set $k_0 = \kzero$. By Lemma~\ref{touchpairs}, there are at most $\eta^2\e n^4/2$ touching edge-pairs in $\HH$ which are not contained in at least $\delta n^2$ spheres with size 6 in $\HH$. By Lemma~\ref{firstcolour}, we can colour the edges of $\HH$ so that at most $\e n^3$ edges are uncoloured, there are at least $\eta n^3$ edges of each colour, and any two disjoint edges of the same colour are $(\delta,k_0)$-connectible. Let $l$ be the number of colours used by this colouring $C_0$, and note that
		\[l\le\binom n3\left(\eta n^3\right)^{-1}<1/\eta.\]
		
		We iteratively construct a sequence of colourings $(C_i)_{1\leq i \leq m}$ as follows. For each $i\ge 0$, if there are at least $\eta^2\e n^4$ touching pairs of edges each using two fixed colours $c_1,c_2$ of $C_i$, then we define the colouring $C_{i+1}$ by replacing  $c_1,c_2$ with a common new colour (if there is more than one choice for such a pair of colours, we pick any acceptable pair). Of course, these recolourings may be performed at most $l-1$ times, and we stop after $m<l$ steps when there are at most \[\binom{l}2 \eta^2 \e n^4<\e n^4\] non-monochromatic touching edge-pairs in the colouring $C_m$.
		
		For $0 \le i \le m$, we prove by induction that any two disjoint edges of the same colour in $C_i$ are $(\delta^{3^i},3^ik_0)$-connectible in $\HH$. This is certainly true for $i=0$, so suppose that we have established this fact for $i$, and that $e,f \in E(\HH)$ have the same colour in $C_{i+1}$ but not in $C_{i}$, where they are coloured, say, red and blue respectively. Then there are at least $\eta^2\e n^4$ touching red-blue edge pairs in $C_{i}$, and at least $\eta^2 \e n^4/2$ of these pairs are contained in at least $\delta n^2$ spheres with size 6 in $\HH$. At least half of the latter pairs $(e',f')$  satisfy the additional requirement that $V(e) \cup V(f)$ and $V(e') \cup V(f')$ are disjoint, since there are only $O(n^3)$ 
		touching-edge pairs which fail to satisfy it.
		By the induction hypothesis and the  pigeonhole principle, we observe that for some $k_1,k_2\le 3^{i-1}k_0$, there are at least
		\[\eta^2 \e n^4/4(3^{i-1}k_0)^2 > \delta n^4\]
		of the above red-blue pairs of touching edges $(e',f')$ such that there are
		\begin{enumerate}
			\item\label{complete1} at least $\delta^{3^{i-1}}n^{k_1}$ ways to complete $e$ and $e'$ into a sphere $S_1$ using $k_1$ extra vertices,
			\item\label{complete2} at least $\delta^{3^{i-1}}n^{k_2}$ ways to complete $f'$ and $f$ into a sphere $S_2$ using $k_2$ extra vertices, and
			\item\label{complete3} at least $\delta n^2$ ways to complete $e'$ and $f'$ into a sphere  $S_3$ using two extra vertices.
		\end{enumerate}
		Thus, there are at least $\delta^{2(1 + 3^{i-1})}n^{k_1 + k_2 + 6}< 2 \delta^{3^{i}} n^{k_1 + k_2 + 6}$ ways to choose a combination of these objects, namely a pair $(e',f')$ and three spheres $S_1$, $S_2$ and $S_3$ with the above properties. Note that for every such choice, the symmetric difference of the edge sets of $S_1$, $S_2$ and $S_3$ forms a sphere containing $e$ and $f$ and $k_1+k_2+6\leq 2\cdot 3^{i-1}k_0 +6 < 3^i k_0$ additional vertices, provided the sets of extra vertices used in \ref{complete1}, \ref{complete2} and \ref{complete3} above are disjoint from each other and from $V(e)\cup V(e')\cup V(f') \cup V(f)$. As the number of ways to choose any triple of vertex sets, let alone spheres, with $k_1, k_2$ and $2$ elements respectively, such that two of the sets intersect or one intersects a given $10$-element set, is $O(n^{k_1 + k_2 + 1})$, we deduce that there are at least
		\[2\delta^{3^{i}} n^{k_1 + k_2 + 6} - O(n^{k_1 + k_2 + 5})\]
		spheres in $\HH$ including $e$, $f$ and $k_1 + k_2 + 6$ extra vertices,  so it follows that $e$ and $f$ are $(\delta^{3^i},3^ik_0)$-connectible in $\HH$.
		
		It is now clear that our final colouring $C_m$ has the required properties, completing the proof.
	\end{proof}
	
	\subsection{Colour interaction}
	We say that a $3$-graph $\HH$ on $n$ vertices is \emph{$(\e,\mu)$-coloured} if $\codeg(\HH) \ge (1/3+\mu)n$ and all but $\e n^3$ of its edges are coloured red or green so that
	\begin{enumerate}
		\item there are at most $\e n^4$ pairs of touching red and green edges, and
		\item at least $\mu n/4$ vertices are contained in fewer than $\e n^2$ red edges.
	\end{enumerate}
	
	The following lemma turns the $(\e,\mu)$-coloured property into a property
	of the link graphs $L(v)$ we will use in Section~\ref{almost} to construct almost spanning spheres.
	For an edge-coloured $3$-graph $\HH$ as above, and $v \in V(\HH)$, we define the {\em green link graph} $GL(v)$ to be the graph on $V(\HH) \setminus \{ v \}$ where two vertices $x$ and $y$ are joined with an edge if $vxy$ is a \emph{green edge} of $\HH$.
	\begin{lemma}\label{newlink} Let $1/n \ll \e \ll  \mu\ll 1$, and let $\HH$ be an $n$-vertex $(\e,\mu)$-coloured $3$-graph. Then, for all but at most $\mu^{4} n$ vertices $v \in V(\HH)$, the green link graph $GL(v)$ has at least $(1/3+\mu/2)n$ vertices with degree at least $(1/3+\mu /2)n$.
	\end{lemma}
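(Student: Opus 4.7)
Writing $r(v), g(v), u(v)$ for the red, green, uncoloured edge-degrees at a vertex $v$, and $r_{vt}, g_{vt}, u_{vt}$ for the corresponding codegrees of a pair, I note that the degree of $t$ in $GL(v)$ is exactly $g_{vt}$, and the identity $g_{vt} + r_{vt} + u_{vt} = \Deg_\HH(v, t) \ge (1/3+\mu)n$ shows $g_{vt} \ge (1/3+\mu/2)n$ precisely when $r_{vt} + u_{vt} \le \mu n/2$. Setting $B_v := \{t : r_{vt}+u_{vt} > \mu n/2\}$, one has $\sum_{t \in B_v}(r_{vt}+u_{vt}) \le 2(r(v)+u(v))$, yielding $|B_v| < 4(r(v)+u(v))/(\mu n)$. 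Thus the lemma reduces to showing that, for all but $\mu^4 n$ vertices $v$, one has $r(v) + u(v) \le \mu n^2 /7$; this forces $|B_v| \le (2/3-\mu/2)n$ and hence gives at least $(1/3+\mu/2)n$ vertices in $GL(v)$ with degree at least $(1/3+\mu/2)n$.

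The uncoloured part is immediate: $\sum_v u(v) = 3 u(\HH) \le 3\e n^3$, so by Markov $|\{v : u(v) > \mu n^2/14\}| \le O(\e n/\mu) \le \mu^4 n/2$ provided $\e \ll \mu^5$. It remains to bound $|\{v : r(v) > \mu n^2/14\}|$ by $\mu^4 n/2$. Since $\sum_v r(v) = 3 r(\HH)$, a second Markov step shows it suffices to establish $r(\HH) \le \mu^5 n^3 / 84$.

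The crux is thus establishing $r(\HH) = O(\mu^5 n^3)$, combining conditions (1) and (2). Condition (2) supplies a set $T$ with $|T| \ge \mu n/4$ and $\sum_{t \in T} r(t) < \e n^3$, directly bounding red edges meeting $T$. By a similar double-count, $\sum_{p \subset T} r_p \le 3\e n^3 / 2$ (using $\binom{|e \cap T|}{2} \le 2|e \cap T|$), so most pairs $p \subset T$ have small $r_p$ and hence $g_p \ge (1/3+\mu/2)n$ via the codegree lower bound. The plan is then to use condition (1) (the touching bound $\sum_p r_p g_p \le \e n^4$) in a weighted double-counting argument over vertex quadruples that exploits the abundant greenness on $T$-pairs: any red edge lying fully inside $V(\HH) \setminus T$ contributes to some $r_p$ for $p \not\subset T$, and coupling with the large $g_p$ values on $T$-pairs via the touching constraint should yield $r(\HH) \le O(\e n^3 / \mu^c)$ for a small constant $c$. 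Once $\e \ll \mu^{5+c}$, the desired bound on the count of red-heavy vertices follows, completing the proof by union with the uncoloured count.

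The principal obstacle is this final bound on $r(\HH)$. Conditions (1) and (2) individually are insufficient: condition (1) permits $r(\HH)$ to be arbitrarily large if red and green codegrees are supported on disjoint pair classes, and condition (2) only controls red edges that intersect $T$, which may be far less than $r(\HH)$ if most red edges sit inside $V(\HH) \setminus T$. The delicate work is thus to design the weighted averaging that propagates the ``rich greenness on $T$-pairs'' outward to force a polynomial-in-$\mu$ bound on $r(\HH)$, most likely by pivoting on four-tuples of vertices that mix pairs in $T$ with pairs in $V(\HH) \setminus T$ and invoking the codegree lower bound to prevent too much red--green separation.
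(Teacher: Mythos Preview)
Your reduction to bounding $r(\HH)$ is not just delicate but impossible: the bound $r(\HH)=O(\mu^5 n^3)$ is false in general, and so is the weaker intermediate claim that all but $\mu^4 n$ vertices satisfy $r(v)\le \mu n^2/14$. Take $V(\HH)=X\cup Y$ with $|X|=|Y|=n/2$ and let $E(\HH)$ consist of all triples meeting $Y$ in an odd number of vertices; colour the triples inside $Y$ red and the triples with exactly one vertex in $Y$ green. Every pair has codegree at least $n/2-2$, there are \emph{zero} touching red--green pairs, and every vertex of $X$ lies in zero red edges, so this is $(\e,\mu)$-coloured for any $\mu<1/6$ and small $\e$. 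Yet $r(\HH)=\binom{n/2}{3}\approx n^3/48$, and every vertex $v\in Y$ has $r(v)=\binom{n/2-1}{2}\approx n^2/8$; that is $n/2$ vertices with $r(v)\gg \mu n^2/14$. The lemma's conclusion nonetheless holds here (for $v\in Y$, $GL(v)$ is complete on $X$), so your Markov-based reduction is simply too lossy: it throws away the structural information about \emph{where} the red edges sit.

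The paper does not attempt to bound $r(\HH)$ or $r(v)$. Instead it builds an auxiliary graph $G$ on $V(\HH)$ whose edges are pairs with $\ge(1/3+3\mu/4)n$ monochromatic extensions, shows $G$ is nearly complete, and then pivots through the set $A$ of vertices with few red edges: for $v\notin B\cup B'$ (small exceptional sets) there is some $u\in A$ with $uv$ green in $G$, giving many $w$ with $uvw$ green in $\HH$; if too few of the resulting pairs $vw$ were green in $G$ then many would be red, and each such red $vw$ combined with the green $uvw$ would yield $n/3$ touching red--green pairs in $\HH$, contradicting condition~(a). This is exactly the ``propagation'' you gesture at in your last paragraph, but the correct target is not $r(\HH)$ globally---it is the number of pairs $vw$ that are red in $G$ yet lie inside a green edge of $\HH$, which \emph{is} controlled by the touching bound.
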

	\begin{proof}
		We construct an auxiliary red-green coloured graph $G$ on the vertex set $V(\HH)$ as follows. For each pair $x,y\in V(\HH)$ such that there are at least $(1/3+3\mu/4)n$ edges of one colour in $\HH$ containing $x$ and $y$, add the edge $xy$ to $G$, giving it that colour, where if this pair satisfies this condition for both colours, we colour $xy$ green. It suffices to show that all but at most $\mu^{4} n$ vertices are in at least $(1/3+\mu/2)n$ green edges of $G$, since if $xy$ is a green edge of $G$ then there are at least $(1/3+3\mu/4)$ green edges of the form $xyz$, and it follows that $y$ has degree at least $(1/3+3\mu/4)$ in $GL(x)$.
		
		If $xy$ is not an edge of $G$, then the edges of $\HH$ containing $x$ and $y$ either include at least $\mu n/8$ uncoloured edges or include at least $\mu n/8$ edges of each colour. In the latter case, there are at least $\mu^2 n^2/64$ pairs of differently-coloured edges in $\HH$ which touch along $xy$; since there are at most $\e n^4$ pairs of differently-coloured touching edges in $\HH$, no more than $64\e n^2/\mu^2$ non-edges $xy$ can occur for this reason. Similarly, since there are at most $\e n^3$ uncoloured edges in $\HH$, at most $24\e n^2/\mu$ non-edges $xy$ can occur in $G$ for the former reason. Hence there are at most $88\e n^2/\mu^2<\mu^{6}n^2$ non-edges in $G$.
		
		Since $\HH$ is $(\e,\mu)$-coloured, there are at least $\mu n/4$ vertices in fewer than $\e n^2$ red edges of $\HH$; let $A$ be a set of $\mu n/4$ such vertices. Since each red edge of $G$ extends to more than $n/3$ red edges of $\HH$, each vertex in $A$ is in fewer than $6\e n$ red edges of $G$, so there are fewer than $3\e n^2/2<\mu^{6}n^2$ red edges of $G$ meeting $A$.
		
		Let $B$ be the set of vertices with fewer than $\mu n/8$ green edges in $G$ to $A$. Since for each such vertex there are at least $\mu n/8$ non-edges or red edges with the other end in $A$, and there are fewer than $2\mu^{6}n^2$ pairs which are non-edges or red edges meeting $A$, each of which is counted at most twice, we deduce that $|B|<32\mu^{5}n$. Let $B'$ be the set of vertices with degree less than $(1-\mu/8)n$ in $G$. Since $G$ has fewer than $\mu^{6}n^2$ non-edges, we have $|B'|<8\mu^{5}n$.
		
		Now, fix a vertex $v\in V(\HH)\setminus(B\cup B')$, and suppose that $v$ is in fewer than $(1/3+\mu/2)n$ green edges in $G$. Since $v\not\in B$, there are at least $\mu n/8$ choices of $u\in A$ for which $uv$ is a green edge of $G$; by definition, for each choice of $u$ there are at least $(1/3+3\mu/4)n$ choices of $w$ for which $uvw$ is a green edge of $\HH$. Since $v$ is in fewer than $(1/3+\mu/2)n$ green edges in $G$, and $v\not\in B'$, it must be the case that $vw$ is a red edge of $G$ for at least $\mu n/8$ of these choices of $w$, so there are at least $\mu^2 n^2/64$ pairs $(e,f)$ where $e$ is a red edge of $G$, $f$ is a green edge of $\HH$ and $v\in e\subset f$.
		
		However, there are at most $3\e n^3<\mu^{7}n^3$ pairs $(e,f)$ for which $e$ is a red edge of $G$, $f$ is a green edge of $\HH$ and $e\subset f$, since each such pair corresponds to at least $n/3$ differently-coloured touching edge pairs in $\HH$, of which there are at most $\e n^4$. Each such pair corresponds to at most two choices of $v$, so there are at most $128\mu^{5}n$ vertices in $V(\HH)\setminus(B\cup B')$ which are in fewer than $(1/3+\mu/2)n$ green edges of $G$; it follows that, in total, at most $168\mu^5n<\mu^{4}n$ vertices are in fewer than $(1/3+\mu/2)n$ green edges of $G$, as required.\end{proof}
	
	Crucial for our purposes is the fact that any sufficiently dense $3$-graph admits an $(\e,\mu)$-colouring with strong connectibility properties within each colour class.
	
	\begin{lemma} \label{colourthm} Let $1/n\ll \al,1/k \ll \e \ll \mu$, and let $\HH$ be a $3$-graph on $n$ vertices with $\codeg(\HH)\ge (1/3+\mu)n$. Then it is possible to $(\e,\mu)$-colour $\HH$ so that any monochromatic pair of disjoint edges of $\HH$ is $(\al,k)$-connectible.
	\end{lemma}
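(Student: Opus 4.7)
The plan is to apply Lemma~\ref{secondcolour} to obtain a refined multicolour colouring, combine its classes into red and green according to the tight component structure of $\HH$, and verify that the codegree condition forces enough structure for this to work.

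\emph{Structure of tight components.} First, the codegree condition severely constrains the tight components. For each vertex $v$, colouring $u \in V(L(v))$ by the tight component of $\{v,u\}$ gives a vertex colouring respected by every edge of $L(v)$ (since the three pairs of any triple in $\HH$ lie in a common tight component), and each colour class has at least $(1/3+\mu)n+1$ vertices because $L(v)$ has minimum degree at least $(1/3+\mu)n$. Hence every vertex lies in at most two tight components, and, by using codegree positivity to rule out disjoint tight-component memberships between vertex pairs, $\HH$ has at most two tight components globally. One can further show there is at most one spanning tight component: if $T_1, T_2$ both spanned, then for each $v$ the link $L(v)$ would split as $L_1(v) \sqcup L_2(v)$ with no edge between $N_1(v)$ and $N_2(v)$, since any $\{v,u,w\} \in E(\HH)$ with $u \in N_1(v), w \in N_2(v)$ would have to sit in both $T_1$ and $T_2$. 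This gives $\sum_v |N_1(v)||N_2(v)|/2 \ge n^3/9$ mixed-pair triples, while a codegree-based triangle count in the two pair graphs $G_1, G_2$ caps mixed triples at $(2-3\mu)n^3/18$, a contradiction for $\mu > 0$. Applied to the case of a unique non-spanning tight component $T_2$, the same count yields $|V \setminus V(T_2)| \ge 3\mu n/2$.

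\emph{Assembling the 2-colouring.} Apply Lemma~\ref{secondcolour} with parameters $\alpha$, $k$ and some $\varepsilon' \ll \varepsilon$ to obtain classes $C_1, \ldots, C_m$ with internal $(\alpha,k)$-connectibility, at most $\varepsilon' n^3 \le \varepsilon n^3$ uncoloured edges and at most $\varepsilon' n^4$ bichromatic touching pairs. Each $C_i$ lies inside a single tight component of $\HH$: Lemma~\ref{firstcolour}'s initial classes consist of pairwise-connectible edges and hence sit inside a common tight component, and the merging step of Lemma~\ref{secondcolour} only operates along touching pairs, which cannot cross tight components. Define the final colouring: if $\HH$ has only one tight component, colour every $C_i$ green; if it has a spanning component $T_1$ and a non-spanning $T_2$, colour every $C_i \subseteq T_1$ green and every $C_i \subseteq T_2$ red, leaving the uncoloured edges uncoloured. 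Condition (1) is trivial because red and green occupy distinct tight components and so do not touch, and the at least $3\mu n/2 \ge \mu n/4$ vertices of $V \setminus V(T_2)$ are in no red edges, giving condition (2).

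\emph{Main obstacle.} The remaining step---monochromatic connectibility across distinct $C_i$'s inside a common tight component---is where the main work lies. The plan is to show that inside any tight component $T$ with $\codeg(\HH) \ge (1/3+\mu)n$, any partition of $E(T)$ into two sets of size at least $\eta n^3$ produces $\Omega(\mu \eta^2 n^4)$ touching pairs between the two sides: a partition with many fewer would be essentially induced by a vertex bipartition of $V(T)$, but then cross-partition pairs inside $T$ would have negligible codegree, contradicting the codegree hypothesis. Choosing $\varepsilon'$ small enough that Lemma~\ref{secondcolour}'s merging threshold $\eta^2 \varepsilon' n^4$ sits below this forced quantity, the merging procedure will leave at most one class per tight component, so every monochromatic disjoint pair automatically lies in a common $C_i$ and inherits $(\alpha,k)$-connectibility from Lemma~\ref{secondcolour}. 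Establishing this quantitative lower bound on touching pairs inside a single tight component is the main obstacle and will require a careful interplay between the codegree condition and the structure of possible bipartitions of $E(T)$.
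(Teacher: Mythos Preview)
Your approach diverges substantially from the paper's, and the divergence occurs precisely at the point you label the ``Main obstacle''---which is indeed a genuine gap.

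The paper never appeals to tight-component structure in proving this lemma. Instead, after invoking Lemma~\ref{secondcolour}, it builds an auxiliary coloured graph $G$ on $V(\HH)$ (where $xy$ gets colour $c$ if many $\HH$-edges through $xy$ have colour $c$), shows $G$ is nearly complete, passes to a high-minimum-degree subgraph $G'$ in which every edge lies in $(1/3+\mu/2)n$ monochromatic triangles, and then argues by triangle-counting that $G'$ carries at most two colours and that one of them spans at most $(1-\mu/4)n$ vertices. The key step (Claim~\ref{Br}) is a double-count comparing monochromatic triangles against bichromatic ``cherries'', and it makes no assumption that the two surviving colours correspond to distinct tight components---indeed they need not.

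Your plan instead asserts that the merging in Lemma~\ref{secondcolour} will terminate with at most one class per tight component, so that colouring by tight component automatically gives connectibility. But this assertion is exactly what requires proof, and your sketch (``a partition with few touching pairs would be essentially induced by a vertex bipartition, hence negligible cross-codegree'') is not an argument. There is no obvious reason why two large edge-sets inside a single tight component must share $\Omega(\mu\eta^2 n^4)$ touching pairs; the tight-component hypothesis only gives a \emph{chain} of touching pairs between any two edges, not a density of them. Moreover, you are reaching into the internal parameter $\eta$ of Lemma~\ref{secondcolour} rather than using it as a black box, which would require reproving it.

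There are also smaller loose ends: the passage from ``at most two tight components through each vertex'' to ``at most two tight components globally'' is not justified (the extremal construction in the introduction has three tight components at codegree $\lfloor n/3\rfloor-1$, with two through each vertex, so the margin here is thin), and the bound $|V\setminus V(T_2)|\ge 3\mu n/2$ is asserted by analogy without the relevant inequality being written down. But the decisive gap is the unproved merging claim: without it, colouring by tight component gives no connectibility guarantee across distinct $C_i$'s, and the proof does not go through.
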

	\begin{proof}
		Choose $\eta > 0$ so that $\al,1/k \ll\eta\ll\e$. By Lemma~\ref{secondcolour}, all but at most $\eta n^3$ edges of $\HH$ can be coloured so that any two disjoint edges of the same colour are $(\al,k)$-connectible and there are at most $\eta n^4$ pairs of touching edges with different colours. It only remains to show then that some two colours account for all but $(\e-\eta)n^3$ of the coloured edges, and that at least $\mu n/4$ vertices appear in fewer than $\e n^2$ edges of one of these colours.
		
		To prove this, we shall first show that for all but a few pairs of vertices, there are a large number of edges of a single colour containing that pair which also meet a large number of edges of the same colour along each of the other pairs they contain. In order to consider pairs of vertices in this way, we will find it convenient to define an auxiliary coloured graph $G$. We then show that there is only room in $\HH$ for such large monochromatic structures in at most two colours; this is similar to proving that a $3$-graph with minimum codegree exceeding $n/3$ can have at most two tight components. Finally, we shall use a double counting argument on the interactions between colours to show that there is a similar obstacle to nearly all vertices meeting many edges of both colours.
		
		We start by defining an auxiliary graph $G$ on $V(\HH)$ where we join a pair $x,y \in V(\HH)$ of vertices if there is a unique colour $c$ so that there are at least $(1/3+\mu-\e)n$ edges with colour $c$ in $\HH$ containing $x$ and $y$; moreover, we assign the colour $c$ to such an edge. If $xy$ is not an edge of $G$, then it follows that at least \[(\codeg(\HH)-\e n)\e n>\e n^2/3\] pairs of differently-coloured touching edges of $\HH$ have intersection $\{x,y\}$. Thus, $G$ is `nearly complete', missing at most $3\eta n^2/\e$ edges.
		\begin{claim}
			At most $\e^2n^2$ pairs of vertices are in fewer than $(1/3+\mu-\e)n$ monochromatic triangles of $G$.
		\end{claim}
		\begin{poc}
			Since $G$ has at most $3\eta n^2/\e< \e^3n^2/6$ non-edges, fewer than $\e^3n^3/6$ edges of $\HH$ contain a non-edge of $G$. For every coloured edge of $\HH$ which contains an edge of $G$ coloured differently, there are at least $(1/3+\mu-\e)n$ pairs of differently-coloured touching edges of $\HH$, so we conclude that at most $\e^3n^3/6$ such edges exist. Therefore, all but at most $\e^3n^3/3$ edges of $\HH$ correspond to monochromatic triangles in $G$. Consequently, at most $\e^2n^2$ pairs of vertices lie in more than $\e n$ edges of $\HH$ which do not correspond to a monochromatic triangle, so all other pairs of vertices lie in at least $(1/3+\mu-\e)n$ monochromatic triangles in $G$, proving our claim.
		\end{poc}
		
		Now, let us remove all the edges of $G$ which are in fewer than $(1/3+\mu-\e)n$ monochromatic triangles of $G$, resulting in a graph missing at most \[3\eta n^2/\e + \e^2 n^2 < 2\e^2 n^2\] edges, and then remove, iteratively, at most $\e n/2$ vertices with minimum degree from this graph to obtain a graph $G'$ of minimum degree at least $(1-4\e)n$. In $G'$, every edge is in at least $(1/3+\mu/2)n$ monochromatic triangles, since any edge of $G'$ was in at least $(1/3+\mu-\e)n$ monochromatic triangles of $G$, and in passing to $G'$, edges have been removed from at most $8\e n$ such triangles and vertices have been removed from at most $\e n/2$ of these triangles. Calling a triangle {\em rainbow} if its edges have three different colours, we then have the following claim.
		
		\begin{claim} \label{rainbow}
			There are at most $\e^3 n^3$ rainbow triangles in $G'$.
		\end{claim}
		\begin{poc}
			If $xyz$ is a rainbow triangle in $G'$, say with $xy$ red, $yz$ green and $xz$ blue, then write $W_r$ for the set of vertices $w\in V(\HH)$ such that $wxy$ is a red edge of $\HH$, and define $W_g$ and $W_b$ analogously. We clearly have \[|W_r|,|W_g|,|W_b|\ge(1/3+\mu-\e)n,\] so we can find $\mu n$ vertices in more than one set, and hence $\mu n$ pairs of differently-coloured touching edges. Each such pair arises from at most two rainbow triangles, so it follows that there are at most $2\eta n^3) \mu < \e^3 n^3$ rainbow triangles in $G'$.
		\end{poc}
		
		Next, suppose there are at least three colours in $G'$, say red, green and blue. Each colour meets at least $(1/3+\mu/2)n$ vertices by the definition of $G'$, and at most $\e n$ vertices are in $\e^2 n^2$ rainbow triangles by Claim~\ref{rainbow}, so we may pick a vertex $v$ incident to edges of at least two colours, red and green say, which is in fewer than $\e^2 n^2$ rainbow triangles. Let $V_r$ and $V_g$ be the red and green neighbourhoods of $v$ in $G'$. Since $|V_r\cup V_g|\ge(2/3+\mu)n$, at least $3\mu n/2 > 2\e n$ vertices in $V_r\cup V_g$ meet blue edges, and at least one of these vertices is in fewer than $\e n$ rainbow triangles containing $v$. Pick such a vertex $w$, and assume without loss of generality that $w\in V_r$. Now $w$ has at least $(1/3+\mu/2)n$ blue neighbours in $V(G') \setminus V_g$ and at least $(1/3+\mu/2)n$ red neighbours in $V_r$, so $|V(G') \setminus V_g|\ge(2/3+\mu)n$, which is a contradiction.
		
		Therefore, $G'$ only has two colours, say, red and green. Write $B_r$ and $B_g$ for the set of vertices spanned by the red and green edges respectively, and suppose without loss of generality that $|B_r|\le|B_g|$.
		\begin{claim} \label{Br}
			$|B_r|\le (1-\mu/4-6\e)n$, whence $|V(G')\setminus B_r|\ge(\mu/4+3\e)n$.
		\end{claim}
		\begin{poc}
			Suppose, for the sake of a contradicton, that \[|B_g|\ge|B_r|\ge (1-\mu /4-6\e)n.\] At least $(1-\mu/2-12\e)n$ vertices are in both $B_r$ and $B_g$, and each such vertex is incident to at least $(1/3+\mu/2)n$ edges of each colour, and to at least $(1-4\e)n$ edges in total by the definition of $G'$. Consequently, for each such vertex $v$, there are at least \[(1/3+\mu/2)(2/3-\mu/2-4\e)n^2\geq n^2/9\] pairs $x,y \in V(G')$ with $vx$ red and $vy$ green. Each such pair produces a triple which is not a monochromatic triangle in $G'$, and no such triple is counted more than twice, so there are at least $n^3/9$ distinct triples of this kind. However, $G'$ has at least $(1-\e/2)(1-4\e)n^2/2$ edges and each edge of $G'$ is in at least $(1/3+\mu/2)n$ monochromatic triangles, so the number of triples inducing monochromatic triangles is at least \[(1-\e/2)(1-4\e)(1/3 + \mu/2)n^3/3 > n^3/18.\] We therefore find at least
			\[n^3/9+n^3/18>\binom n3\] distinct triples on $V(G')$, which is a contradiction.
		\end{poc}

		We now obtain the colouring we seek as follows: we un-colour all the edges of $\HH$ not coloured red or green, as well as all the edges of $\HH$ containing either a non-edge of $G'$ or a vertex not in $V(G')$. Let us count the number of edges of $\HH$ that are no longer coloured after we do this. At most $\eta n^3$ edges of $\HH$ were not coloured in the original colouring, at most $2\e^2n^3$ edges of $\HH$ contain a non-edge of $G'$, and at most $\e n^3/2$ edges of $\HH$ contain a vertex not in $V(G')$. Finally, any edge of $\HH$ containing an edge of $G'$ not coloured either red or green contributes at least $n/3$ pairs of differently-coloured touching edges, so there are at most $3\eta n^3$ such edges. Therefore, the number of edges of $\HH$ not coloured in our final colouring is at most
		\[\eta n^3 + 2\e^2n^3 + \e n^3/2 + 3\eta n^3 < \e n^3.\]
		Finally, notice that any red edge in our final colouring of $\HH$ incident to a vertex in $V(G') \setminus B_r$ is in at least $n/3$ pairs of differently-coloured touching edges, so the number of such edges is at most $3\eta n^3 < \e^2n^3$. It follows from Claim~\ref{Br} that at least $\mu n/4$ vertices in $V(G') \setminus B_r$ meet fewer than $\e n^2$ red edges, as required. All the other properties that we need are inherited from the original colouring of $\HH$, proving the statement.
	\end{proof}

	\subsection{Green-tinged absorbers}
	We now build absorbing structures in an $(\e, \mu)$-coloured $3$-graph. These structures will be crucial in transforming an `almost spanning' copy of a surface into a spanning one.
	
	Fix an $(\e, \mu)$-coloured $3$-graph $\HH$. We say a subgraph of $\HH$ is \emph{green-tinged} if it has at least two green edges. Given a subset $U$ of the vertices of $\HH$, a \emph{green-tinged absorber for $U$} is a sphere $S$ in $\HH[V(\HH) \setminus U]$ which contains two green edges $e$ and $f$ so that for every $U'\subset U$, there is a sphere in $\HH$ on the vertex set $V(S)\cup U'$ containing both $e$ and $f$.
	
	\begin{lemma}\label{greenabsorb}
		Let $1/n \ll \e \ll \mu \ll 1$, $1/n\ll\eta$, and suppose that $\HH$ is an $(\e,\mu)$-coloured $3$-graph with $n$ vertices.
		Then for any $R\subset V(\HH)$ with $|R|\le \mu n/72$, there is a collection of $l\le \eta n$ vertex-disjoint spheres in $\HH[V(\HH) \setminus R]$,
		spanning at most $8|R|$ vertices in total, which are respectively green-tinged absorbers for some pairwise disjoint sets $R_1, R_2, \dots,R_l\subset R$ with $R_1 \cup R_2 \cup \dots \cup R_l = R$.
	\end{lemma}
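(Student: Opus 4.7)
My approach is to construct \emph{single-vertex absorbers} of constant size for each $v\in R$, select a vertex-disjoint family, and (if necessary) merge into multi-vertex absorbers to meet the $l\le\eta n$ bound. A single-vertex absorber for $v$ will be a small sphere $S$ in $\HH[V(\HH)\setminus(R\cup\{v\})]$ containing two green edges, such that $V(S)$ can be augmented by $v$ to yield another sphere on $V(S)\cup\{v\}$ retaining those two green edges. The basic construction is via \emph{stellar subdivision}: take a tetrahedron on $\{x,y,z,w\}\subseteq V(\HH)\setminus R$ with $v\in N(x,y)\cap N(y,z)\cap N(x,z)$ --- so that replacing the face $xyz$ by the three triangles $vxy, vyz, vxz$ produces a valid sphere on $V(S)\cup\{v\}$ --- and with at least two of the other three faces $xyw, yzw, xzw$ being green edges of $\HH$. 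In the event that the link graph $L_\HH(v)$ is too triangle-poor to supply enough such configurations, I will instead use an absorber based on a $4$-cycle in $L_\HH(v)$ (plentiful by the supersaturation result Proposition~\ref{c4supsat} applied to $L_\HH(v)$, which has minimum degree at least $(1/3+\mu)n$): the analogous construction removes the four $v$-incident faces of $S'$ and re-triangulates $v$'s link cycle via a diagonal when $v$ is removed.

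For each $v\in R$, I count absorber candidates by combining: (i) the abundance of triangles (or $4$-cycles) in $L_\HH(v)$; (ii) the codegree bound, which via inclusion--exclusion provides $\Omega(n)$ choices for the remaining vertices of the sphere (the triple-intersections $N(x,y)\cap N(y,z)\cap N(x,z)$ have size at least $3\mu n - O(1)$, and removing $R$ leaves $\Omega(n)$ choices since $|R|\le\mu n/72$); and (iii) the density of green edges implied by the $(\e,\mu)$-colouring, which --- using the scarcity of uncoloured edges and of differently-coloured touching pairs --- guarantees that a positive fraction of extensions satisfy the two-green-faces condition. This yields $\Omega(n^c)$ absorber configurations per $v$ for some constant $c$, and a standard sample-and-alter argument (include each candidate independently with the appropriate probability, then delete overlaps, using Chernoff bounds via Proposition~\ref{chernoff} for concentration) then selects a vertex-disjoint family of single-vertex absorbers, one per $v\in R$, using fewer than $8|R|$ vertices in total.

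\textbf{Main obstacle and residual case.} The above yields $l=|R|$ absorbers, which satisfies $l\le\eta n$ in the natural regime $|R|\le\eta n$; since $|R|\le\mu n/72$, this is automatic whenever $\eta\ge\mu/72$. The main obstacle is the residual case $|R|>\eta n$ (forcing $\eta<\mu/72$), where I must merge single-vertex absorbers into multi-vertex ones so that $l\le\eta n$ without exceeding the $8|R|$ vertex budget. For this I use the $(\al,k)$-connectibility of green edges from Lemma~\ref{colourthm}: given two tetrahedral absorbers $S_i,S_{i+1}$ with green edges $f_i\in S_i$ and $e_{i+1}\in S_{i+1}$, a bridging sphere $T_i$ containing $f_i$ and $e_{i+1}$ on a bounded number of extra vertices (disjoint from everything built so far) is guaranteed, and the symmetric difference $S_i\triangle T_i\triangle S_{i+1}$ is again a sphere (a topological connected sum along $f_i$ and $e_{i+1}$). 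The absorbing faces of $S_i$ and $S_{i+1}$ persist in the merged sphere and remain vertex-disjoint, so their stellar subdivisions commute and the merged sphere serves as a green-tinged absorber for $R_i\cup R_{i+1}$. Iterating in groups of size $\lceil|R|/(\eta n)\rceil$ brings $l$ below $\eta n$; the delicate point is budgeting the bridging vertices carefully (using the smallest connectibility parameter consistent with Lemma~\ref{colourthm} and, where necessary, replacing a bridge by a direct short tube found using the codegree bound) so that the total vertex count remains within the $8|R|$ bound.
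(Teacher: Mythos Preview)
Your merging step has a genuine gap. Each single-vertex absorber uses at least four vertices, so the family consumes $\ge 4|R|$ before any merging. Reducing $|R|$ absorbers to at most $\eta n$ requires roughly $|R|-\eta n$ bridges, and each $(\al,k)$-connectibility bridge uses up to $k$ extra vertices; the paper's connectibility machinery gives $k\ge 33$ already in Lemma~\ref{firstcolour}, and Lemma~\ref{secondcolour} only enlarges $k$. The total is then at least $(4+33)|R|-O(\eta n)\gg 8|R|$, so the vertex budget is blown. Your proposed fix of a ``direct short tube found using the codegree bound'' is not substantiated: nothing in the paper connects two arbitrary disjoint green edges by a sphere on $O(1)$ extra vertices from the codegree hypothesis alone, and in fact no such statement is true in general. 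There is a second problem with the merging: connectibility is not part of the hypotheses of this lemma. Being $(\e,\mu)$-coloured says nothing about $(\al,k)$-connectibility, and Lemma~\ref{colourthm} only asserts that \emph{some} $(\e,\mu)$-colouring with connectibility exists, not that the given one has it. A smaller error: the claim $|N(x,y)\cap N(y,z)\cap N(x,z)|\ge 3\mu n-O(1)$ is wrong; inclusion--exclusion from codegree $(1/3+\mu)n$ gives only $(3\mu-1)n$, which is negative for $\mu<1/3$, so your count of tetrahedral absorbers needs a different argument.

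The paper avoids merging entirely by building $k$-vertex absorbers directly, and this is the idea you are missing. For a $k$-set $U\subset R$, an averaging argument (Lemma~\ref{averaging}) shows that the intersection $\bigcap_{v\in U}L'_v$ of suitably green-restricted link graphs is still dense for \emph{some} choice of $U$; a second averaging then locates $k+1$ auxiliary vertices $t_1,\dots,t_{k+1}$ each forming green edges with a dense subgraph of that intersection. A copy of the bipartite double ladder $DL_k$ in this subgraph (via Theorem~\ref{kov}) yields a single sphere on $4(k+1)\le 8k$ vertices that absorbs any subset of $U$, with the green-tinge guaranteed by the faces through $t_{k+1}$. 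Greedily iterating with $k\approx\mu/(36\eta)$ until fewer than $\eta n/2$ vertices of $R$ remain, then handling the rest one at a time, meets both the $\eta n$ and $8|R|$ bounds simultaneously without any bridging.
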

	\begin{proof}
		Fix an integer $k \ge \mu/(36\eta)$. We shall use a greedy procedure to iteratively find `large' absorbers for subsets of $R$ of size $k$ until fewer than $\eta n/2$ vertices of  $R$ remain, and then find `small' absorbers for the remaining vertices one by one. We shall also ensure that the absorber that we construct for a subset $X \subset R$ at any particular iteration of our procedure uses at most $8|X|$ vertices. Observe that our choice of $k$ ensures that at most $\eta n/2$ large absorbers and $\eta n/2$ small absorbers are built.
		
		Let us now describe a step of our iterative procedure. Write $A$ for the union of the vertices in the vertex-disjoint absorbers selected so far, and $R'$ for the remaining vertices in $R$ for which we have yet to built an absorber; in particular, we initially have $A=\emptyset$ and $R'=R$. Note that we shall always ensure that $|A\cup R|<9|R|\le \mu n/8$.
		
		We first describe how we proceed when $|R'|\ge \eta n/2$. Let $B\subset V(\HH)$ be the set of vertices contained in at most $\e n^2$ red edges, so that $|B|\ge \mu n/4$, and choose a set $B'\subset B\setminus (A\cup R)$ with $|B'|=\mu n/8$. For each $v\in R'$, let $L_v$ be the subgraph of the link-graph $L(v)$ induced by the set $V(\HH)\setminus (A\cup R)$. Let $L'_v\subset L_v$ be the subgraph of $L_v$ consisting of those edges of $L_v$ which are additionally in at least $n/3$ green edges of $\HH$. We shall show that $L'_v$ contains many edges incident with $B'$.
		
		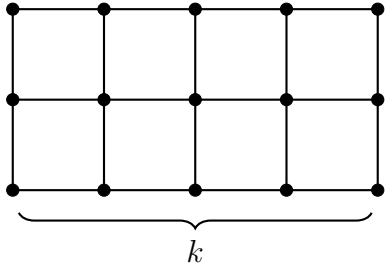
\begin{figure}\begin{center}
				\begin{tikzpicture}[scale = 0.8]
				\draw[thick] (0,0) -- (6,0);
				\draw[thick] (0,1.5) -- (6,1.5);
				\draw[thick] (0,3) -- (6,3);
				\draw[thick] (0,0) -- (0,3);
				\draw[thick] (1.5,0) -- (1.5,3);
				\draw[thick] (3,0) -- (3,3);
				\draw[thick] (4.5,0) -- (4.5,3);
				\draw[thick] (6,0) -- (6,3);
				
				\draw[fill] (0,0) circle [radius=0.1];
				\draw[fill] (1.5,0) circle [radius=0.1];
				\draw[fill] (3,0) circle [radius=0.1];
				\draw[fill] (4.5,0) circle [radius=0.1];
				\draw[fill] (6,0) circle [radius=0.1];
				
				\draw[fill] (0,1.5) circle [radius=0.1];
				\draw[fill] (1.5,1.5) circle [radius=0.1];
				\draw[fill] (3,1.5) circle [radius=0.1];
				\draw[fill] (4.5,1.5) circle [radius=0.1];
				\draw[fill] (6,1.5) circle [radius=0.1];
				
				\draw[fill] (0,3) circle [radius=0.1];
				\draw[fill] (1.5,3) circle [radius=0.1];
				\draw[fill] (3,3) circle [radius=0.1];
				\draw[fill] (4.5,3) circle [radius=0.1];
				\draw[fill] (6,3) circle [radius=0.1];
				
				\draw [thick,decoration={brace,amplitude=0.5em,mirror,raise=0.3cm},decorate] (0.1,0) -- (5.9,0);
				\node at (3,-1) {$k$};
				\end{tikzpicture}
				\caption{The double ladder $DL_k$.}\label{fig:dlk}
		\end{center}\end{figure}
		
		Since $|A\cup R|\le \mu n/8$, it follows that $L_v$ has minimum degree $(1/3+7\mu/8)n$. Every vertex $x\in B'$ has at least $(1/3+3\mu/4)n$ edges in $L_v$ to $V(\HH)\setminus(A\cup R\cup B')$, and by the definition of $B'$, at most  $4\e n/\mu$ of these edges in $L_v$ are contained in more than $\mu n/4$ red edges of $\HH$, for otherwise we would get $(4\e n/\mu) (\mu n/4)=\e n^2$ red edges incident with $x\in B'$. A similar calculation shows that at most $12\e n^2/\mu$ edges of $L_v$ are in more than $\mu n/4$
		uncoloured edges of $\HH$. Easily, if $e$ is an edge of $L_v$ contained in at most $\mu n/2$ edges of $H$ that are either red or uncoloured,
		then $e$ lies in $L'_v$. From these facts, we conclude that $L'_v$ contains more than one third of all edges between $B'$ and $V(\HH)\setminus(A\cup R\cup B')$ for each $v \in R'$.
		
		For a set $U\subset R'$ of size $k$, write $L_U  = \cap_{v \in U} L'_v$. Now, by Lemma~\ref{averaging}  ---  applied with $\mathcal X$ being the set
		$B' \times (V(\HH)\setminus(A\cup R\cup B'))$ of possible edges of $L_U$, $r=k$ and $\gamma=1/3$  ---  we may find a set $U\subset R'$ of size $k$
		such that $L_U$ contains more than a $3^{-k}/2$ proportion of the pairs in $B' \times (V(\HH)\setminus(A\cup R\cup B'))$, so $|E(L_U)|\geq 3^{-k}\mu n^2/17$.
		Each edge in $L_U$ is contained in at least $n/3$ green edges of $\HH$ by definition, so averaging using Lemma~\ref{averaging} again  ---  now with
		$\mathcal X=E(L_U)$, $m=|V(\HH)\setminus(A\cup R)|$, each set $X_i$ being the subset of $E(L_U)$ which form green edges with a given vertex in
		$V(\HH)\setminus(A\cup R)$, $r=k+1$ and $\gamma=1/4$  ---  we can find a set $T=\{t_1,t_2,\dots,t_{k+1}\}$ of $k+1$ vertices disjoint from $U$,
		and a subgraph $L'_U$ of $L_U$ with at least $12^{-k}\mu n^2/136$ edges with the property that each edge in $L'_U$ and each vertex in $T$ together form a green edge of $\HH$.
		
		We can now find a copy of the double ladder $DL_k$ with $k$ spaces, depicted in Figure~\ref{fig:dlk}, in $L'_U$; indeed, since $DL_k$ is bipartite, Theorem~\ref{kov} guarantees the existence of such a copy. The green edges consisting of vertices of $T$ and edges of the copy of $DL_k$ contain an absorbing sphere $S_U$ for $U$, shown in Figure~\ref{fig:absorption}. 
		Since all the edges of $S_U$ containing $t_{k+1}$ are green, and are present no matter which vertices are absorbed, $S_U$ is a green-tinged absorber; it has $4(k+1)\leq 8k$ vertices, as required.

		\begin{figure}\begin{center}
				\begin{tikzpicture}[scale = 1.12]
				\draw[thick] (0,0) -- (4,0);
				\draw[thick,red] (0,1) -- (4,1);
				\draw[thick] (0,2) -- (4,2);
				\draw[thick] (0,0) -- (0,2);
				\draw[thick] (1,0) -- (1,2);
				\draw[thick] (2,0) -- (2,2);
				\draw[thick] (3,0) -- (3,2);
				\draw[thick] (4,0) -- (4,2);
				
				\draw[thick,red] (0,0) -- (1,2);
				\draw[thick,red] (0,2) -- (1,0);
				\draw[thick,red] (1,0) -- (2,2);
				\draw[thick,red] (1,2) -- (2,0);
				\draw[thick,red] (2,0) -- (3,2);
				\draw[thick,red] (2,2) -- (3,0);
				\draw[thick,red] (3,0) -- (4,2);
				\draw[thick,red] (3,2) -- (4,0);
				
				\draw[thick,green] (2,3) .. controls(-1,3) and (-1,1) .. (0,1);
				\draw[thick,green] (2,3) -- (0,2);
				\draw[thick,green] (2,3) -- (1,2);
				\draw[thick,green] (2,3) -- (2,2);
				\draw[thick,green] (2,3) -- (3,2);
				\draw[thick,green] (2,3) -- (4,2);
				\draw[thick,green] (2,3) .. controls(5,3) and (5,1) .. (4,1);
				
				\draw[thick,green] (2,-1) .. controls(-1,-1) and (-1,1) .. (0,1);
				\draw[thick,green] (2,-1) -- (0,0);
				\draw[thick,green] (2,-1) -- (1,0);
				\draw[thick,green] (2,-1) -- (2,0);
				\draw[thick,green] (2,-1) -- (3,0);
				\draw[thick,green] (2,-1) -- (4,0);
				\draw[thick,green] (2,-1) .. controls(5,-1) and (5,1) .. (4,1);
				
				\draw[fill] (0,0) circle [radius=0.075];
				\draw[fill] (1,0) circle [radius=0.075];
				\draw[fill] (2,0) circle [radius=0.075];
				\draw[fill] (3,0) circle [radius=0.075];
				\draw[fill] (4,0) circle [radius=0.075];
				
				\draw[fill] (0,1) circle [radius=0.075];
				\draw[fill] (1,1) circle [radius=0.075];
				\draw[fill] (2,1) circle [radius=0.075];
				\draw[fill] (3,1) circle [radius=0.075];
				\draw[fill] (4,1) circle [radius=0.075];
				
				\draw[fill] (0,2) circle [radius=0.075];
				\draw[fill] (1,2) circle [radius=0.075];
				\draw[fill] (2,2) circle [radius=0.075];
				\draw[fill] (3,2) circle [radius=0.075];
				\draw[fill] (4,2) circle [radius=0.075];
				
				\draw[fill] (0.5,1) circle [radius=0.075] node[below=0.3em,font=\footnotesize]{$t_1$};
				\draw[fill] (1.5,1) circle [radius=0.075] node[below=0.3em,font=\footnotesize]{$t_2$};
				\draw[fill] (2.5,1) circle [radius=0.075] node[below=0.3em,font=\footnotesize]{$t_3$};
				\draw[fill] (3.5,1) circle [radius=0.075] node[below=0.3em,font=\footnotesize]{$t_4$};
				
				\draw[fill] (2,3) circle [radius=0.075] node[above,font=\footnotesize]{$t_{k+1}$};
				\draw[fill] (2,-1) circle [radius=0.075] node[below,font=\footnotesize]{$t_{k+1}$};
				
				\end{tikzpicture}
				\qquad
				\begin{tikzpicture}[scale = 1.12]
				\draw[thick] (0,0) -- (4,0);
				\draw[thick,red] (0,1) -- (4,1);
				\draw[thick] (0,2) -- (4,2);
				\draw[thick] (0,0) -- (0,2);
				\draw[thick] (1,0) -- (1,2);
				\draw[thick] (2,0) -- (2,2);
				\draw[thick] (3,0) -- (3,2);
				\draw[thick] (4,0) -- (4,2);
				
				\draw[thick,red] (0,0) -- (1,1);
				\draw[thick,red] (0,1) -- (1,0);
				\draw[thick,red] (1,0) -- (2,1);
				\draw[thick,red] (1,1) -- (2,0);
				\draw[thick,red] (0,1) -- (1,2);
				\draw[thick,red] (0,2) -- (1,1);
				\draw[thick,red] (1,1) -- (2,2);
				\draw[thick,red] (1,2) -- (2,1);
				\draw[thick,red] (2,0) -- (3,2);
				\draw[thick,red] (2,2) -- (3,0);
				\draw[thick,red] (3,0) -- (4,2);
				\draw[thick,red] (3,2) -- (4,0);
				
				\draw[thick,green] (2,3) .. controls(-1,3) and (-1,1) .. (0,1);
				\draw[thick,green] (2,3) -- (0,2);
				\draw[thick,green] (2,3) -- (1,2);
				\draw[thick,green] (2,3) -- (2,2);
				\draw[thick,green] (2,3) -- (3,2);
				\draw[thick,green] (2,3) -- (4,2);
				\draw[thick,green] (2,3) .. controls(5,3) and (5,1) .. (4,1);
				
				\draw[thick,green] (2,-1) .. controls(-1,-1) and (-1,1) .. (0,1);
				\draw[thick,green] (2,-1) -- (0,0);
				\draw[thick,green] (2,-1) -- (1,0);
				\draw[thick,green] (2,-1) -- (2,0);
				\draw[thick,green] (2,-1) -- (3,0);
				\draw[thick,green] (2,-1) -- (4,0);
				\draw[thick,green] (2,-1) .. controls(5,-1) and (5,1) .. (4,1);
				
				\draw[fill] (0,0) circle [radius=0.075];
				\draw[fill] (1,0) circle [radius=0.075];
				\draw[fill] (2,0) circle [radius=0.075];
				\draw[fill] (3,0) circle [radius=0.075];
				\draw[fill] (4,0) circle [radius=0.075];
				
				\draw[fill] (0,1) circle [radius=0.075];
				\draw[fill] (1,1) circle [radius=0.075];
				\draw[fill] (2,1) circle [radius=0.075];
				\draw[fill] (3,1) circle [radius=0.075];
				\draw[fill] (4,1) circle [radius=0.075];
				
				\draw[fill] (0,2) circle [radius=0.075];
				\draw[fill] (1,2) circle [radius=0.075];
				\draw[fill] (2,2) circle [radius=0.075];
				\draw[fill] (3,2) circle [radius=0.075];
				\draw[fill] (4,2) circle [radius=0.075];
				
				\draw[fill] (0.5,1.5) circle [radius=0.075] node[above,font=\footnotesize]{$v$};
				\draw[fill] (1.5,1.5) circle [radius=0.075] node[above,font=\footnotesize]{$w$};
				\draw[fill] (0.5,0.5) circle [radius=0.075] node[below,font=\footnotesize]{$t_1$};
				\draw[fill] (1.5,0.5) circle [radius=0.075] node[below,font=\footnotesize]{$t_2$};
				\draw[fill] (2.5,1) circle [radius=0.075] node[below=0.3em,font=\footnotesize]{$t_3$};
				\draw[fill] (3.5,1) circle [radius=0.075] node[below=0.3em,font=\footnotesize]{$t_4$};
				
				\draw[fill] (2,3) circle [radius=0.075] node[above,font=\footnotesize]{$t_{k+1}$};
				\draw[fill] (2,-1) circle [radius=0.075] node[below,font=\footnotesize]{$t_{k+1}$};
				\end{tikzpicture}
				\caption{The absorbing sphere $S_U$ for $U$ is shown on the left; each triangle in the figure corresponds to an edge of $\HH$, which exists because the underlying double ladder lies in $L'_U$. On the right, we show how $S_U$ can absorb two arbitrary vertices $v,w\in U$.}\label{fig:absorption}
		\end{center}\end{figure}
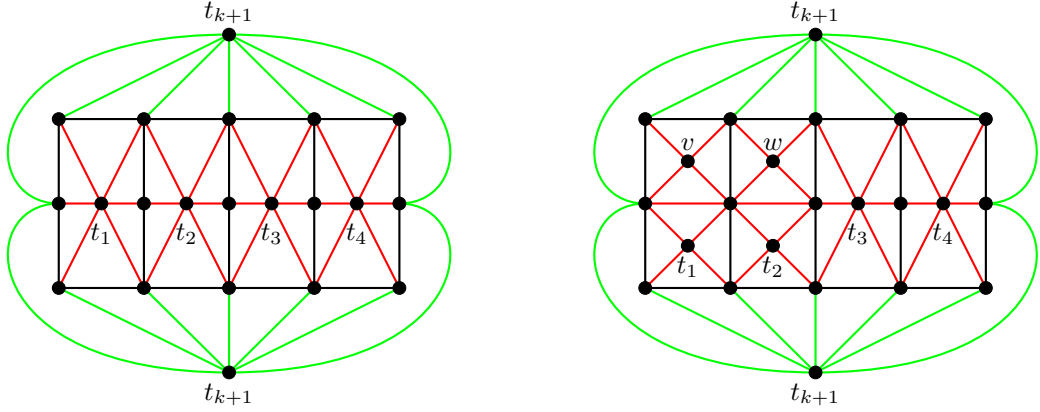
		
		We now turn to dealing with how to proceed when  $|R'|< \eta n/2$. In this case, we find absorbing spheres for the vertices in $R'$ one by one as follows. We pick $v\in R'$, construct the graph $L'_v$ as above, and
		use Lemma~\ref{averaging} with $\mathcal X=E(L'_v)$, $r=2$ and $\gamma=1/4$, to find a pair of vertices $T=\{t_1,t_2\}$ disjoint from $A\cup R$
		and a graph $L''_v\subset L'_v$ with at least \[|E(L'_v)|/32>n^2/800\] edges, each of which forms a green edge of $\HH$ with both $t_1$ and $t_2$. We may find a copy of $DL_1$ in $L''_v$ using Theorem~\ref{kov}, and the corresponding green edges of $\HH$ contain a green-tinged absorber for $\{v\}$ of order $8$, obtained by repeating the construction of Figure~\ref{fig:absorption} with $k=1$.
		
		The iterative procedure described above clearly produces the absorbers we require, proving the lemma.
	\end{proof}
	
	By letting the spheres we construct absorb all of $R$ in Lemma~\ref{greenabsorb}, we immediately obtain the following corollary.
	\begin{corollary}\label{greensphere}
		Let $1/n \ll \e \ll \mu \ll 1$, $1/n\ll \eta$ and suppose that $\HH$ is a $(\e,\mu)$-coloured $3$-graph on $n$ vertices. Then for any $R\subset V(\HH)$ with $|R|\le \mu n/72$, there is a collection of at most $\eta n$ vertex-disjoint green-tinged spheres in $\HH$ which cover $R$. \qed
	\end{corollary}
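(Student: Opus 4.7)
The plan is to deduce this directly from Lemma~\ref{greenabsorb} by simply invoking the absorbing property of each absorber on its entire assigned set.

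First, I would apply Lemma~\ref{greenabsorb} with the given $R$, $\eta$, $\e$, $\mu$ to obtain an integer $l \le \eta n$, vertex-disjoint spheres $S_1, S_2, \ldots, S_l$ in $\HH[V(\HH)\setminus R]$, and pairwise disjoint subsets $R_1, R_2, \ldots, R_l \subset R$ with $R_1 \cup \cdots \cup R_l = R$, such that each $S_i$ is a green-tinged absorber for $R_i$. By the definition of a green-tinged absorber, each $S_i$ contains two green edges $e_i$ and $f_i$.

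Next, for each $i$, I would apply the absorbing property of $S_i$ with $U' = R_i$: this produces a sphere $S_i'$ in $\HH$ on vertex set $V(S_i) \cup R_i$ that still contains both $e_i$ and $f_i$. In particular, $S_i'$ has at least two green edges and is therefore itself green-tinged.

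Finally, I would verify disjointness. The original spheres $S_i$ are vertex-disjoint and contained in $V(\HH)\setminus R$, and the sets $R_i$ are pairwise disjoint subsets of $R$, so the vertex sets $V(S_i')=V(S_i)\cup R_i$ are pairwise disjoint. Together they cover $R_1 \cup \cdots \cup R_l = R$, and their number is $l \le \eta n$, as required. There is no real obstacle here: the content of the corollary is entirely packaged inside Lemma~\ref{greenabsorb}, and only the trivial step of exercising each absorber on the full set it was built for is needed.
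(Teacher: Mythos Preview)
Your proposal is correct and is precisely the paper's approach: the paper states the corollary with a \qed\ and remarks just before it that one obtains the result by letting the absorbers from Lemma~\ref{greenabsorb} absorb all of $R$. Your write-up simply makes this one-line deduction explicit.
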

	
	\subsection{Spheres covering twice as many bad vertices as good vertices}
	The following lemma will allow us to create spheres that use twice as many vertices from some specific set of `bad' vertices as from a specific set of `good' vertices. This will prove useful in Section~\ref{almost} since the connectibility properties of our $3$-graphs will impose constraints of precisely this nature.
	
	\begin{lemma}\label{mopbad} Let $1/N\ll \beta,\gamma,1/k$ and suppose that $\HH$ is a $3$-graph on vertex classes $A$ and $B$, where $A$ has at least $N$ vertices and every edge has one vertex in $B$ and two vertices in $A$. Suppose that for at least $\beta N$ vertices $v\in B$, the link-graph $L(v)$ has at least $(1/2+\gamma)|A|$ vertices with degree at least $(1/2+\gamma)|A|$. Then there is a sphere $S$ in $\HH$ which has $4k$ vertices in $B$ and $2k+2$ vertices in $A$.
	\end{lemma}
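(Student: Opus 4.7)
My plan is to realise the desired sphere as a \emph{stellated double pyramid}: I pick two apex $A$-vertices $a_+, a_-$, an equator $a_1, a_2, \ldots, a_{2k}$ of $A$-vertices forming a cyclic sequence, and $4k$ pairwise distinct $B$-vertices $\{b_i^+, b_i^-\}_{i=1}^{2k}$, where $b_i^\pm$ serves as the ``centre'' of the face $\{a_\pm, a_i, a_{i+1}\}$ of the underlying double pyramid (with indices taken modulo $2k$). This produces a triangulation of $\ss^2$ with exactly $2k+2$ vertices in $A$ and $4k$ vertices in $B$; each of its $12k$ triangular faces is of the form $\{a_\pm, a_i, b_i^\pm\}$, $\{a_\pm, a_{i+1}, b_i^\pm\}$ or $\{a_i, a_{i+1}, b_i^\pm\}$, hence has exactly one vertex in $B$ and two in $A$, consistent with the edge structure of $\HH$.

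To select the apexes, let $B^* \subset B$ be the set of at least $\beta N$ good $B$-vertices supplied by the hypothesis, and for each $v \in B^*$ let $A_v \subset A$ denote a witnessing subset of size at least $(1/2+\gamma)|A|$, every vertex of which has degree at least $(1/2+\gamma)|A|$ in $L(v)$. Double-counting the pairs $(a, v) \in A \times B^*$ with $a \in A_v$ supplies a pair of apexes $a_+, a_- \in A$ for which $B^\circ := \{v \in B^*: a_+, a_- \in A_v\}$ has size $\Omega_{\beta, \gamma}(N)$. Elementary degree bounds then show that for each $v \in B^\circ$, the number of triangles in $L(v)$ containing $a_+$ (and, separately, $a_-$) is at least $2\gamma^2 |A|^2$, since either apex has at least $2\gamma|A|$ common neighbours with every vertex of $A_v$ in $L(v)$.

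For each $(x, y) \in A^2$ and $\pm \in \{+, -\}$, let $f^\pm(x, y)$ be the number of $v \in B^\circ$ such that $\{a_\pm, x, y\}$ is a triangle in $L(v)$; the preceding bound yields $\sum_{(x, y)} f^\pm(x, y) = \Omega_{\beta, \gamma}(N |A|^2)$. The crux of the argument, and the step I expect to be the main obstacle, is producing $\Omega_{\beta, \gamma}(|A|^2)$ pairs $(x, y)$ for which \emph{both} $f^+(x, y)$ and $f^-(x, y)$ are simultaneously of order $\Omega_{\beta, \gamma}(N)$; since the individual densities of ``top-good'' and ``bottom-good'' pairs are each only $\Omega_{\beta, \gamma}(1)$, naive inclusion--exclusion does not suffice. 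I plan to resolve this via a Paley--Zygmund / second-moment argument in which the choice of apex pair is performed jointly with the equator pair, averaging to show that some fixed apex pair $(a_+, a_-)$ admits $\Omega_{\beta, \gamma}(|A|^2)$ simultaneously top- and bottom-good pairs $(x, y)$. The resulting ``doubly good'' graph $G$ on $A$ has $\Omega(|A|^2)$ edges, and Theorem~\ref{kov} (applied to the degenerate bipartite graph $C_{2k}$) then produces a $2k$-cycle $a_1 a_2 \cdots a_{2k}$ in $G$.

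Finally, each of the $4k$ faces $\{a_\pm, a_i, a_{i+1}\}$ of the underlying bipyramid admits $\Omega_{\beta, \gamma}(N)$ valid $B$-vertex candidates (namely, any $v$ whose link contains the corresponding triangle); since $4k$ is a constant independent of $N$, a greedy procedure selects distinct $b_i^\pm$ for all $4k$ faces in turn, avoiding at most $4k - 1$ previously chosen $B$-vertices at each step. This completes the construction, with the Paley--Zygmund step above being the only non-routine ingredient.
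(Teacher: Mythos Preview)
Your stellated-bipyramid construction is exactly the simplicial complex the paper uses, but you and the paper build it in opposite orders, and this difference is what creates the obstacle you flag.

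The paper selects the $4k$ vertices of $B$ \emph{first}. For each good $v\in B$ it forms the $3$-graph $H_v$ of triangles in $L(v)$, shows $|H_v|\ge \gamma^2|A|^3/3$, and then applies the averaging lemma (Lemma~\ref{averaging}) with $r=4k$ to obtain a set $K\subset B$ of size $4k$ whose common triangle $3$-graph $H_K=\bigcap_{v\in K}H_v$ still has $\Omega_{\gamma,k}(|A|^3)$ edges. Lemma~\ref{find-sphere} then produces a double pyramid on $2k+2$ vertices inside $H_K$; since every one of its $4k$ faces lies in $H_K$, the $4k$ vertices of $K$ may be placed into the faces in any bijection whatsoever. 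There is no ``doubly-good'' issue, because the intersection over all $4k$ link hypergraphs has already been taken before any $A$-vertex is chosen.

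Your route---fix the apexes $a_\pm$ first, then seek a cycle of pairs $(x,y)$ good for both---forces you to intersect only after committing to $a_+$ and $a_-$, which is why naive inclusion--exclusion fails. Your proposed fix (average over the apex pair jointly with the equator pair) does work: restrict $B^*$ to exactly $\beta N$ vertices, let $S$ be the set of ordered triples $(a,x,y)$ lying in at least $\gamma^2\beta N$ of the $T_v$, show $|S|\ge \gamma^2|A|^3$ by a first-moment bound, and then apply Cauchy--Schwarz to $d(x,y)=|\{a:(a,x,y)\in S\}|$ to find an off-diagonal pair $(a_+,a_-)$ with $\Omega_{\gamma}(|A|^2)$ doubly-good $(x,y)$. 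This is a correct argument, but note that once you have formed the dense $3$-graph $S$, you could equally well apply Lemma~\ref{find-sphere} to $S$ directly and skip the Cauchy--Schwarz step altogether---at which point you have essentially reconstructed the paper's proof. Your preliminary double-count producing $B^\circ$ is then unnecessary and should be dropped.

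In short: both approaches are valid, but the paper's order of operations (intersect on the $B$-side first via Lemma~\ref{averaging}, then find the bipyramid via Lemma~\ref{find-sphere}) is shorter and avoids the detour you correctly identified as the crux.
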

	\begin{proof}Write $B'$ for the set of vertices $v\in B$ for which $L(v)$ has at least $(1/2+\gamma)|A|$ vertices with degree at least $(1/2+\gamma)|A|$. For each $v\in B'$, let $H_v$ be the $3$-graph of edges which are triangles in $L(v)$. Choose a vertex $v_1$ with degree at least $(1/2+\gamma)|A|$ in $L(v)$, and note that at least $2\gamma |A|$ of its neighbours must have degree at least $(1/2+\gamma)|A|$ in $L(v)$ also. Thus, choosing $v_2$ to be such a neighbour, there are at least $2\gamma |A|$ triangles containing $v_1$ and $v_2$. As there were at least $2\gamma (1/2+\gamma)|A|^2$ choices for the pair $(v_1,v_2)$, $L(v)$ contains at least $\gamma^2|A|^3/3$ triangles. Hence, $H_v$ has at least $\gamma^2|A|^3/3$ edges.

		By Lemma~\ref{averaging} we can now find a set $K\subset B'$ of size $4k$ for which the $3$-graph $H_K =\cap_{v\in K} H_v$ has at least $(2\gamma^2)^{4k}|A|^3/12$ edges. By Lemma~\ref{find-sphere}, applied with $T=A$ and $\delta=(2\gamma^2)^{4k}/12$, we can find a double pyramid on exactly $2k+2$ vertices in $H_K$. This double pyramid has $4k$ faces, and by putting a vertex from $K$ into each face of this double pyramid, we get a sphere in $\HH$ with $4k$ vertices from $B$ and $2k+2$ vertices from $A$.
	\end{proof}

	\subsection{Almost spanning green spheres}\label{almost}
	
	We now show that we can find a reasonably small set of disjoint spheres which span all but a small fraction of the vertices of an $(\e, \mu)$-coloured $3$-graph $\HH$.
	The proof uses Szemer\'edi's regularity lemma, and the following lemma will be used to partition the reduced graph.
	
	\begin{lemma}\label{matchpart} Let $1/n\ll \e$, and suppose that $G$ is an $n$-vertex graph. Then we can find a partition of $V(G)$ into sets $Z$, $B$, $C$ and $D$
		such that
		\begin{enumerate}
			\item there is a perfect matching in $Z$,
			\item there is a perfect matching between $C$ and $D$, and
			\item $|E(B\cup D,Z\cup B\cup D)|\leq \e n^2$.
		\end{enumerate}
	\end{lemma}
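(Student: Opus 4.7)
The plan is to apply the Gallai--Edmonds structure theorem to $G$, since the conclusion we seek is essentially a packaging of the structure this theorem already provides. Recall that this theorem furnishes a canonical partition $V(G) = C(G) \cup A(G) \cup D(G)$, where $D(G)$ is the set of vertices missed by at least one maximum matching of $G$, $A(G) = N_G(D(G)) \setminus D(G)$, and $C(G)$ is the remainder. I shall use the following standard consequences: (i) $G[C(G)]$ admits a perfect matching; (ii) each component of $G[D(G)]$ is factor-critical, and so has odd order; (iii) every maximum matching $M$ of $G$ restricts to a perfect matching of $G[C(G)]$ and matches each vertex of $A(G)$ to a vertex lying in a distinct component of $G[D(G)]$; and (iv) there are no edges in $G$ between $C(G)$ and $D(G)$.

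Fix any maximum matching $M$ of $G$. For each component $K$ of $G[D(G)]$, let $v_K$ denote either the vertex of $K$ matched to some $a \in A(G)$ by $M$, if $K$ is paired off to $A(G)$ in this way, or else the unique vertex of $K$ left unmatched by $M$ (which exists and is unique by factor-criticality together with the oddness of $|K|$). In either case, $M$ restricts to a perfect matching of $K \setminus \{v_K\}$. I then set
\[ C := A(G), \quad D := \{v_K : K \text{ is paired off to } A(G)\}, \quad B := \{v_K : K \text{ is not paired off to } A(G)\}, \]
and $Z := C(G) \cup \bigcup_K (K \setminus \{v_K\})$. Condition (1) is then immediate because $Z$ is perfectly matched by $M|_{C(G)}$ together with $M|_{K \setminus \{v_K\}}$ for each component $K$, and condition (2) is immediate because the $A(G)$-to-$D(G)$ edges of $M$ form a perfect matching between $C$ and $D$.

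The substantive verification is of condition (3), and this is where the Gallai--Edmonds structure does all the work. Observe that $B \cup D$ is exactly the set $\{v_K : K \text{ a component of } G[D(G)]\}$, so $B \cup D \subset D(G)$ and $|B \cup D|$ equals the number of components of $G[D(G)]$. Now $V(G) \setminus C = C(G) \cup D(G)$, so by (iv) every edge counted in $|E(B \cup D, Z \cup B \cup D)|$ lies entirely inside $D(G)$. Since edges of $G$ with both endpoints in $D(G)$ are precisely the edges of $G[D(G)]$, and since distinct components of $G[D(G)]$ are disconnected there, each $v_K$ contributes edges only to vertices of $K \setminus \{v_K\} \subset Z$, and there are at most $|K| - 1$ such edges. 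Summing over components gives
\[ |E(B \cup D, Z \cup B \cup D)| \le \sum_K (|K| - 1) \le |D(G)| \le n, \]
which is well below $\e n^2$ under the assumption $1/n \ll \e$.

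The only ``hard part'' is thus the willingness to invoke Gallai--Edmonds; if one preferred to avoid it, an elementary proof could be obtained by taking $M$ to be a maximum matching of $G$ minimising an appropriate secondary objective (for instance, the number of edges incident to $V(G) \setminus V(M)$), and then massaging the resulting structure by hand, but the structural approach above is considerably cleaner.
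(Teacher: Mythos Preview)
Your proof is correct and takes a genuinely different route from the paper's. The paper argues from first principles: it fixes a maximum matching $M$, sets $B = V(G) \setminus V(M)$, and then iteratively peels off layers $C_i$ (vertices in $V(M)$ with many neighbours in $B$ and previously peeled $D_j$'s) and $D_i = N_M(C_i)$, establishing through several augmenting-path claims that $C$ contains no matching edge, that $B \cup D$ is independent, and that few $Z$-vertices have many neighbours in $B \cup D$. Your approach instead invokes the Gallai--Edmonds decomposition, which packages all of this structural work into a single citation; the sets $B$ and $D$ you produce are the distinguished vertices $v_K$ of the factor-critical components, and the crucial fact that $B \cup D$ has no neighbours in $C(G)$ follows immediately from the definition of $A(G)$. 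The payoff is a much cleaner argument and a strictly stronger bound (at most $n$ edges rather than $\varepsilon n^2$), so the hypothesis $1/n \ll \varepsilon$ is essentially unused. The paper's approach, on the other hand, is self-contained and avoids appealing to a nontrivial black box; its iterative construction is also somewhat closer in spirit to how the partition is later used in the proof of Lemma~\ref{almost-cover}, though this is not a logical dependence.
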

	\begin{proof}
		Let $M$ be a matching of maximum size in $G$, and for a set of vertices $X$, write $N_M(X)$ for the neighbourhood of $X$ in $M$, i.e., for the set of vertices joined to $X$ by edges of $M$. Let $B$ be the set of vertices which are not in $M$. We will find sequences of disjoint sets $C_1,\dots,C_r$ and $D_1,\dots,D_r$ as follows: iteratively, for each $i\geq 1$, if there are at least $\e n/2$ vertices in \[V(M) \setminus \bigcup_{j<i}({C}_i\cup {D}_i)\] with at least $\e n/2$ neighbours in \[{B}\cup\left(\bigcup_{j<i}{D}_j\right),\] then let ${C}_i$ be the set of those vertices and set ${D}_i=N_M({C}_i)$.
		
		Note that the sets $C_1, C_2, \dots, C_r$ as constructed above are disjoint, and each such set has size at least $\e n/2$; this ensures that $r \le 2/\e$. We may also assume that $r\ge 1$, since if not, we are done by setting $Z=V(M)$, $B=V(G)\setminus Z$ and $C=D=\emptyset$. Indeed, then $E(B\cup D,Z\cup B\cup D)=E(B,Z)$ by the maximality of $M$. Since no iterations of our procedure were possible, there are at most $\e n/2$ vertices in $Z$ with more than $\e n/2$ neighbours in ${B}$, contributing at most $\e n^2/2$ edges in total, and the remaining vertices contribute at most $\e n^2/2$ edges, giving the required result.
		
		Now, let ${C}=\cup_{i=1}^r{C}_i$ and let ${D}=\cup_{i=1}^r{D}_i=N_M({C})$, and note that $1 \le r \le 2/\e$. We shall now deduce some properties of the sets of vertices we have constructed thus far.
		
		\begin{claim}\label{claimpath}
			For any vertex $v\in {B}\cup {D}$ and any set $U\subset {B}\cup {C}\cup
			{D}$ with size at most $\e n/8$ such that $v\notin U\cup N_M(U)$, there is, for some $0\leq s\leq 1/\e$,
			a path of length $2s$ from $v$ into ${B}$ in $G[({B}\cup {C}\cup {D})\setminus U]$ which contains
			$s$ edges from $M$.
		\end{claim}
		\begin{poc}
			Starting with $x_0=v$, find the longest sequence $x_0,y_0,x_1,y_1,\dots,y_{s-1},x_s$ of
			distinct vertices in $({B}\cup
			{C}\cup {D})\setminus U$ such that
			\begin{enumerate}[label={(\roman*)}]
				\item for each $0\leq i<s$, $x_iy_i\in M$,
				\item for each $0\leq i\leq s$,  $x_i\in ({B}\cup (\cup_{j\leq
					r-i}{D}_j))\setminus (U\cup N_M(U))$, and
				\item for each $0\leq i<s$, $y_ix_{i+1}\in E(G)$.
			\end{enumerate}
			Note that such a sequence exists as the one-term sequence $x_0$ satisfies these conditions. Note also that if $x_i\in B$ for some $i$, then $i=s$ and the sequence stops at that point since there are no subsequent choices for $y_i$. In particular, since $x_r$, if it exists, must be in $B$, we have $s\le r\le 1/\e$. We claim that $x_s\in {B}$, and that this sequence consequently gives us the desired path.
			
			Suppose for the sake of a contradiction that $x_s\not\in B$. Then\[x_s\in \left(\bigcup_{j\leq r-i}{D}_j\right)\setminus (U\cup
			N_M(U)),\] so choosing $y_s$ such that $x_sy_s\in M$, we have \[y_s\in \left(\bigcup_{j\leq r-i}{C}_j\right)\setminus U.\]
			Note that since $x_iy_i\in M$ for each $i<s$, we have that $y_s$ is distinct from $y_1,\dots,y_{s-1}$.
			
			We know that $y_s\in \cup_{j\leq r-i}{C}_j$, so, by definition, $y_s$ has at least $\e
			n/2$ neighbours in ${B}\cup (\cup_{j\leq r-i}{D}_j)$. Therefore, since
			\[\left|U\cup N_M(U)\cup \{x_0,y_0,x_1,y_1,\dots,y_{s-1},x_s\}\right| \le \e n/4+2/\e<\e n/2,\]
			we can find a vertex $x_{s+1}$ which is a neighbour of $y_s$ satisfying
			\[x_{s+1}\in {B}\cup \left(\bigcup_{j\leq r-i}{D}_j\right)\setminus (U\cup N_M(U)\cup \{x_0,y_0,x_1,y_1,\dots,y_{s-1},x_s,\}),\]
			contradicting maximality of the sequence, and proving the claim.
		\end{poc}
		
		\begin{claim}\label{claimonevx}
			At most one vertex in each edge of $M$ has at least three neighbours in ${B}\cup {D}$.
		\end{claim}
		\begin{poc}
			Suppose to the contrary that for some edge $u_0v_0\in M$, each of $u_0$ and $v_0$
			has at least three neighbours in $B\cup D$.
			Then there exist
			distinct vertices $u_1$ and $v_1$ in $({B}\cup {D})\setminus\{u_0,v_0\}$
			with $u_0u_1, v_0v_1\in E(G)$.
			Note that $B\cup D$ contains no edges of $M$ by definition,
			so we necessarily have $u_1\neq N_M(v_1)$.
			
			Appealing to Claim~\ref{claimpath}, we now find an integer $0\leq s_1\leq r$ and a path
			$P_1$ in $G[({B}\cup {C}\cup {D})\setminus\{u_0,v_0,v_1,N_M(v_1)\}]$ with length
			$2s_1$ from $u_1$ into ${B}$ which contains $s_1$ edges in $M$. Note that $N_M(V(P_1))\subset V(P_1)$, since this path contains edges of $M$ covering all but the last vertex of the path, which is in $B$ and hence not in $V(M)$.
			
			Using Claim~\ref{claimpath} again, we find another integer $0\leq s_2\leq r$ and a path
			$P_2$ in $G[({B}\cup {C}\cup {D})\setminus(\{u_0,v_0\}\cup V(P_1))]$ with length
			$2s_2$ from $v_1$ into ${B}$ which contains $s_2$ edges in $M$.
			
			Switching edges into the matching $M$ along $P_1$ and $P_2$ creates a
			matching in $G$ with the same number of edges as $M$ which does not
			contain either $u_1$ or $v_1$. Removing $u_0v_0$ from $M$ and adding $u_0u_1$
			and $v_0v_1$ thus creates a matching larger than $M$ in $G$, which is a contradiction.
		\end{poc}
		
		By construction, every vertex in $C$ has at least $\e n/2$ neighbours in $B\cup D$, so Claim~\ref{claimonevx} implies that $C$ contains no edges of $M$. Since $C\subset V(M)$,
		and ${D}=N_M({C})$, we have that $C\cap D=\varnothing$ and that $M$ contains a perfect matching between $C$ and $D$. Now, let $Z=V(M)\setminus (C\cup D)$ and note that $M$ also contains a perfect matching on $Z$, and that $Z$, $B$, $C$ and $D$ partition $V(G)$. It therefore suffices to show that $|E(B\cup D,Z\cup B\cup D)|\le \e n^2$. To do so, we first make the following simple observation.
		\begin{claim}\label{claimtwovx}
			There are no edges of $G$ within the set ${B}\cup {D}$.
		\end{claim}
		\begin{poc}
			Suppose not, so that there is an edge $uv\in E(G)$ with $u,v\in
			{B}\cup {D}$. Note that $uv\not\in M$ since every edge of $M$ meets $Z\cup C$.
			
			Appealing to Claim~\ref{claimpath}, we first find an integer $0\leq s_1\leq r$ and a path
			$P_1$ in $G[{B}\cup {C}\cup {D}\setminus\{v,N_M(v)\}]$ with length $2s_1$
			from $u$ into ${B}$ which contains $s_1$ edges in $M$. As in the proof of Claim~\ref{claimonevx},
			we have $N_M(V(P_1))\subset V(P_1)$, and consequently $v,N_M(v)\not\in N_M(P_1)$
			
			Using Claim~\ref{claimpath} again, we find another integer $0\leq s_2\leq r$ and a path $P_2$ in $G[{B}\cup {C}\cup {D}\setminus V(P_1)]$ with length $2s_2$ from $v$
			into ${B}$ which contains $s_2$ edges in $M$.
			
			Switching edges into the matching $M$ along $P_1$ and $P_2$ creates a matching in $G$ with the same number of edges as $M$ which does not
			contain $u$ or $v$. Adding $uv$ creates a matching larger than $M$ in $G$, which is a contradiction.
		\end{poc}
		
		We know that there are at most $\e n/2$ vertices in ${Z}$ with at least $\e n/2$ neighbours in $G$ in ${B}\cup {D}$ (for otherwise, the iterative process we used above
		would have continued). We know by Claim~\ref{claimtwovx} that the set $B \cup D$ induces no edges, so we deduce that
		\[
		|E({B}\cup {D}, {Z}\cup {B}\cup {D})| \leq (\e n/2) |{Z}|+(\e
		n/2)|{B}\cup {D}|\leq \e n^2,
		\]
		as required.
	\end{proof}
	
	We are now in a position to prove the `almost covering' lemma that we require.
	
	\begin{lemma}\label{almost-cover}
		Let $1/n \ll \eta\ll\e \ll  \mu\ll 1$, and suppose that $\HH$ is an $(\e,\mu)$-coloured $3$-graph on $n$ vertices. Then we can find a collection of at most $\eta n$
		vertex-disjoint green spheres in $\HH$ which cover at least $(1-\mu^3)n$ vertices of $\HH$.
	\end{lemma}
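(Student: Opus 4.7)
The plan is to apply Szemer\'edi's regularity lemma (Theorem~\ref{sz-reg}) to an auxiliary graph capturing the green codegree structure of $\HH$, reduce the problem to a partitioning question on the reduced graph (handled by Lemma~\ref{matchpart}), and then build the required almost-spanning collection of green spheres by combining Lemma~\ref{doubpy} on the bulk of the vertex set with Lemma~\ref{mopbad} to mop up a small set of problematic vertices.

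First I would apply Lemma~\ref{newlink} to set aside a set $W$ of at most $\mu^4 n$ vertices whose green link graphs fail the Dirac-type property, and on $V(\HH)\setminus W$ build an auxiliary graph $G$ in which two vertices are adjacent precisely when they lie together in many green edges of $\HH$. A short double counting argument, using the Dirac-type conclusion of Lemma~\ref{newlink} together with the near-disjointness of the red and green structures granted by the $(\e,\mu)$-coloured property, shows that $G$ has minimum degree close to $(1/3+\mu/3)n$. I would then apply Theorem~\ref{sz-reg} to $G$ with parameters $1/n\ll 1/T\ll \e'\ll \eta$, form the reduced graph $R$ whose edges correspond to $\e'$-regular pairs of density above a threshold just above $1/3$, and apply Lemma~\ref{matchpart} to $R$ to obtain a partition of the clusters as $Z\sqcup B\sqcup C\sqcup D$, with perfect matchings inside $Z$ and between $C$ and $D$ and at most $\e' k^2$ edges meeting $B\cup D$.

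For each matched cluster-pair I would identify a third cluster $T$ whose green codegrees into the pair are large (this pool exists by the minimum degree condition on $R$ and a pigeonhole argument over clusters) and apply Lemma~\ref{doubpy} to the $3$-partite sub-$3$-graph of green edges with one vertex in $T$ and one in each matched cluster, harvesting at most $\eta n/4$ vertex-disjoint green double pyramids that cover all but an $\e'$-fraction of each matched pair. Iterating across all matchings almost covers $Z\cup C\cup D$. To cover the remaining vertices $W\cup B\cup(\text{leftovers})$, of total size at most $\mu^3 n/2$, I would invoke Lemma~\ref{mopbad}: regarding this remainder as the bad class and a green-rich subset of the almost-covered vertices as the good class, the Dirac-type property of Lemma~\ref{newlink} guarantees the hypothesis, and the lemma yields green spheres each absorbing $4k$ bad vertices at the cost of $2k+2$ good ones. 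Iterating this a constant number of times absorbs every remaining vertex; a final count shows the total number of spheres used is at most $\eta n$ and the union spans at least $(1-\mu^3)n$ vertices.

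The main obstacle I anticipate is the interface between the regularity argument, which naturally lives on the $2$-graph $G$, and the $3$-uniform structure of $\HH$: one has to convert density of a regular pair of clusters in $G$ into green density of the $3$-partite $3$-subgraph of $\HH$ induced by the pair together with a suitable third cluster, and do so uniformly enough that the "every pair of large subsets" hypothesis of Lemma~\ref{doubpy} holds. Bridging this gap requires exploiting the scarcity of touching pairs of differently coloured edges from the $(\e,\mu)$-coloured hypothesis, so that most pairs in $G$ actually arise from many \emph{green} edges, together with a slicing/averaging argument over the choice of the third cluster $T$. A secondary technical point is arranging the iterations of Lemma~\ref{mopbad} so that the good class remains dense enough at every step and the total number of spheres stays below $\eta n$.
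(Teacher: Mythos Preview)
Your outline assembles the right ingredients (regularity, Lemma~\ref{matchpart}, Lemmas~\ref{doubpy} and~\ref{mopbad}), but there is a genuine gap that breaks the argument as written. The claim that the remainder $W\cup B\cup(\text{leftovers})$ has size at most $\mu^3 n/2$ is unjustified: Lemma~\ref{matchpart} gives no upper bound on $|B|$, only the edge condition $|E(B\cup D,Z\cup B\cup D)|\le \e n^2$. Since your auxiliary graph $G$ has minimum degree only about $(1/3+\mu/3)n$, the maximum matching in the reduced graph may leave up to a third of the clusters unmatched, so the set $B$ can occupy order $n/3$ vertices of $\HH$. You then propose to absorb $B$ via Lemma~\ref{mopbad} with ``a green-rich subset of the almost-covered vertices'' as the good class, but those vertices are already inside spheres and cannot be reused; to run Lemma~\ref{mopbad} you would need a \emph{fresh} reservoir of good vertices of size at least $|B|/2$, which you have not set aside.

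The paper repairs this by reserving two kinds of vertices \emph{before} applying regularity, and by reversing the order in which the two lemmas are invoked. It first sets aside small random apex pools $T_1,T_2,T_3$ (so Lemma~\ref{doubpy} can later be applied uniformly to every regular pair without the delicate ``find a third cluster'' step you flag as your main obstacle) and, crucially, the set $A$ of vertices lying in few red edges, which is withheld from $G$ entirely. After Lemma~\ref{matchpart}, the paper \emph{first} takes a maximal family $\mathfrak{S}_0$ of green spheres with $4k$ vertices in $B\cup D$ and $2k+2$ vertices in $A\cup C$, produced by Lemma~\ref{mopbad} via Lemma~\ref{newlink}. The $2{:}1$ ratio is not a mop-up trick but the core counting step: maximality together with the inequalities~\eqref{ineq1}--\eqref{ineq3} forces the residual of $B\cup D$ to be at most the residual of $A\cup C$ plus $\mu^3 n/16$. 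Only after this balancing does Lemma~\ref{doubpy}, with apexes from the $T_i$, cover $Z$, the matched pairs $C_i'\cup D_i'$, and the now comparable leftover sets $A'$ and $B'$.
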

	\begin{proof}Our primary strategy is to set aside a small set $T$ of vertices which will be used as apexes of double pyramids covering most of the vertices of $\HH$; in fact, we shall find it convenient to divide $T$ into three parts to be used at different stages of the proof. We also set aside a larger set $A$ of vertices that lie in few red edges; these vertices will be used to create additional spheres that cover vertices that are difficult to cover using double pyramids. Having set aside these sets, we define an auxiliary graph on the remaining vertices whose edges are those pairs which form green edges of $\HH$ with a reasonable number of vertices in $T$. We then use the regularity lemma to partition this graph. Regularity properties will then ensure that we may find double pyramids with apexes in $T$ covering almost all of any given regular pair of reasonable density. Applying Lemma~\ref{matchpart} to the reduced graph, we shall find disjoint regular pairs covering all but a few `bad' parts, and the additional conditions imposed by Lemma~\ref{matchpart} on these bad parts will allow us to use Lemma~\ref{mopbad} to find spheres covering most of the vertices from these bad parts with the help of additional `good' vertices, typically from $A$. We make this outline precise below.
		
		Let $k$ and $l$ be integers such that $ 1/n\ll 1/k\ll 1/l\ll \e$. Here $l$ will be the number of parts of a regularity partition, and $k$ will correspond to the size of spheres used in Lemma~\ref{mopbad}.
		
		Pick random disjoint sets $T_1,T_2,T_3\subset V(\HH)$ by
		including each vertex independently at random in $T_i$ with probability $\mu^3/10$ for each $i\in[3]$, and in none of them
		with the remaining probability $1-3\mu^3/10$. Observe that, by a simple application of
		Lemma~\ref{chernoff}, the properties~\ref{Tprop1} and
		\ref{Tprop2} below hold with high probability for such random choices; thus, we can find sets
		$T_1,T_2,T_3\subset V(\HH)$ satisfying the following conditions.
		
		\begin{enumerate}[label = {\bfseries{A\arabic{enumi}}}]
			\item For each $i\in [3]$, $\mu^3n/20\leq |T_i|\leq \mu^3n/8$.\label{Tprop1}
			\item For each $i\in [3]$, every pair of vertices contained in at least $\e^2 n$
			green edges of $\HH$ is in at least $\e^3 n$ green edges with a third
			vertex in $T_i$.\label{Tprop2}
		\end{enumerate}
		
		Let $A$ be the set of
		vertices in $V(\HH)\setminus (T_1\cup T_2\cup T_3)$ that are in at most $\e n^2$ red edges. Let $G$ be the graph with vertex set $V(\HH)\setminus (A\cup T_1\cup
		T_2\cup T_3)$ where for each pair of vertices $x,y\in V(G)$, we have $xy\in E(G)$ if and only if for each $i\in[3]$, there are at least $\e^3n$ green edges in $\HH$ of the form
		$xyt$ with $t\in T_i$. In particular, by~\ref{Tprop2}, we have $xy\in E(G)$ for every pair $x,y\in V(G)$ contained in at least $\e^2 n$  green edges of $\HH$.
		
		By Theorem~\ref{sz-reg}, we can find a partition $V(G) = Y_0\cup Y_1\cup \dots \cup Y_l$ so that $|Y_0|\le \e^2 n/100$, $|Y_1|= |Y_2| = \dots=|Y_l|$, and all but at most $\e^2 l^2/100$ pairs of these sets are $(\e^2/100)$-regular in $G$. Let $R$ be the reduced graph on the vertex set $\{Y_1,Y_2, \dots, Y_l\}$ with edges corresponding to regular pairs with density at least $\e^2/10$ in $G$. Using Lemma~\ref{matchpart}, we obtain, for some $m$, a partition of $V(R)$ into sets $\mathcal{Z}$, $\mathcal{B}$, $\mathcal{C}=\{C_1, C_2, \dots,C_m\}$ and
		$\mathcal{D}=\{D_1, D_2, \dots,D_m\}$ so that
		\begin{enumerate}
			\item $R[\mathcal{Z}]$ has a perfect matching,
			\item $C_iD_i\in E(R)$ for each $i\in [m]$, and
			\item $|E_R(\mathcal{B}\cup \mathcal{D}, \mathcal{Z}\cup \mathcal{B}\cup
			\mathcal{D})|\leq \e^2 l^2/10$.
		\end{enumerate}
		
		Now, let $Z$, $B$, $C$  and $D$ be, respectively, the vertices of $G$ contained in the classes in $\mathcal{Z}$, $\mathcal{B}$, $\mathcal{C}$ and $\mathcal{D}$. Note that the sets $T_1$, $T_2$, $T_3$, $A$, $B$, $C$, $D$, $Z$ and $Y_0$ partition $V(\HH)$. We begin with some simple properties of these sets.
		
		\begin{claim}\label{mildred}
			There are at most $\e n$ vertices $u\in B\cup D$ for which there are more than $\e n^2$ green
			edges of $\HH$ containing $u$ and at least one other vertex in $Z\cup B\cup D$.
		\end{claim}
		\begin{poc}
			Suppose to the contrary that there are more than $\e n$ such vertices.
			For each such vertex $u$, let $X_u$ be the set of vertices $v\in Z\cup B\cup D$ for which at least
			$\e n/2$ green edges of $\HH$ contain $u$ and $v$. We must have $\abs{X_u}\ge\e n/2$, since otherwise
			fewer than $\e n^2/2$ green edges contain $u$ and some vertex in $X_u$, and fewer than $\e n^2/2$
			contain $u$ but no vertex in $X_u$. Every ordered pair of the form $(u,v)$ with $v\in X_u$ corresponds to an edge of
			$G$, since there are more than $\e^2n^2$ green edges containing $\{u,v\}$, and $u,v\in Z\cup B\cup D\subset V(G)$.
			There are at least $\sum_u\abs{X_u}\geq \e^2 n/2$ such ordered pairs, yielding
			\[
			|E_G(B\cup D,Z\cup B\cup D)|\geq \e^2n^2/4.
			\]
			
			Now, $G$ has at most $n^2/l^2$ edges within $Y_i$ for each $i$, at most $\e^2 n^2/100$ edges between irregular pairs, and at most
			$\e^2 n^2/10$ edges between pairs of density below $\e^2/10$. Thus, all but at most $\e^2 n^2/8$ edges of $G$ lie between regular pairs corresponding to an edge in $R$, so we also have
			\[
			|E_G(B\cup D,Z\cup B\cup D)|<\left(\frac{n}{l}\right)^2
			|E_R(\mathcal{B}\cup \mathcal{D}, \mathcal{Z}\cup \mathcal{B}\cup
			\mathcal{D})|+ \frac{\e^2 n^2}{8}< \frac{\e^2n^2}{4},
			\]
			contradicting the bound above and establishing the claim.
		\end{poc}
		
		\begin{claim}\label{usefulclaim}
			For all but at most $\mu^3 n/64$ vertices $v\in B\cup D$, the green link graph
			$GL(v)$ has at least $(1/3+\mu/4)n$ vertices in $A\cup C$ with at least
			$(1/3+\mu /4)n$ neighbours in $A\cup C$.
		\end{claim}
		\begin{poc}
			By Claim~\ref{mildred} and Lemma~\ref{newlink}, there is a set $U\subset
			B\cup D$ of size at least \[|B\cup D|-\e n-\mu^4n>|B\cup D|-\mu^3 n/64\] such that
			for each $v\in U$, the green link graph $GL(v)$ has the following properties.
			\begin{enumerate}[label = {\bfseries B\arabic{enumi}}]
				\item The number of edges of $GL(v)$ with a vertex in $Z\cup B\cup D$ is at most $\e n^2$. \label{ethel1}
				\item At least $(1/3+\mu/2)n$ vertices of $GL(v)$ have degree at least $(1/3+\mu/2)n$. \label{ethel2}
			\end{enumerate}
			Fixing $v\in U$, we shall show the property in the claim holds for this vertex $v$,
			thereby completing the proof of the claim.
			
			Write $W_v$ for the set of vertices with degree at least $(1/3+\mu/2)n$ in $GL(v)$.
			Let $W_1=W_v\cap(Z\cup B\cup D)$. There are at least $|W_1| (1/3+\mu/2)n/2$ edges in $GL(v)$
			with a vertex in $Z\cup B\cup D$, so by~\ref{ethel1}, we have $|W_1|<6\e n<\mu^2 n$.
			
			Let $W_2\subset A\cup C$ be the set of vertices which, in $GL(v)$, have more than $\mu n/6$
			neighbours in $Z\cup B\cup D$. There are at least $|W_2| \mu n/6$ edges
			in $GL(v)$ with a vertex in $Z\cup B\cup D$, so by~\ref{ethel1}, we again have $|W_2|<\mu^2 n$.
			
			Every vertex in $W_v\setminus (W_1\cup W_2\cup T_1\cup T_2\cup T_3)$ is in $A\cup C$ and has at
			least $(1/3+\mu /3)n$ neighbours in $A\cup C\cup T_1\cup T_2\cup T_3$, and consequently has more
			than $(1/3+\mu /4)n$ neighbours in $A\cup C$. Since
			\[|W_v\setminus(W_1\cup W_2\cup T_1\cup T_2\cup T_3)|>|W_v|-5\mu^2 n>(1/3+\mu/4) n,\]
			the required property in the claim holds for $v$; this completes the proof.
		\end{poc}
		
		We are now in a position to start constructing the `almost spanning' collection of spheres that we require. Let $\mathfrak{S}_0$ be a collection of vertex-disjoint green spheres in
		$\HH[A\cup B\cup C\cup D]$ so that $|V(\mathfrak{S}_0)|$ is maximal, subject to the following conditions.
		\begin{enumerate}[label = {\bfseries C\arabic{enumi}}]
			\item Each sphere in $\mathfrak{S}_0$ has $4k$ vertices in $B\cup D$ and $2k+2$
			vertices in $A\cup C$. \label{sphprop1}
			\item $|D_i\cap V(\mathfrak{S}_0)|\leq |C_i\cap V(\mathfrak{S}_0)|$ for each $i\in [m]$.\label{sphprop2}
		\end{enumerate}
		First, note that if $\mathfrak{S}_0\neq\varnothing$ then, by~\ref{sphprop1}, we have
		\begin{equation}\frac{|(A\cup C)\cap V(\mathfrak{S}_0)|}{|V(\mathfrak{S}_0)|} =\frac{(2k+2)|\mathfrak{S}_0|}{(6k+2)|\mathfrak{S}_0|}<\frac{1}{3}+\frac{1}{k}<\frac{1}{3}+\frac{\e}{3};\label{ratio}\end{equation}
		therefore, we have the following.
		\begin{equation}3|(A\cup C)\cap V(\mathfrak{S}_0)|<|V(\mathfrak{S}_0)|+\e n.\label{ineq1}\end{equation}
		Second, since $A\cup C$ and $B\cup D$ are disjoint sets in $V(\HH)$, we have the following.
		\begin{equation}
		|(B\cup D)\setminus V(\mathfrak{S}_0)|\leq n-|V(\mathfrak{S}_0)|-|(A\cup C)\setminus V(\mathfrak{S}_0)|.\label{ineq2}
		\end{equation}
		
		We now argue that~\eqref{ineq1} and~\eqref{ineq2}, along with the maximality of
		$\mathfrak{S}_0$, imply the following.
		
		\begin{claim}\label{keyclaim} $|(B\cup D)\setminus V(\mathfrak{S}_0)|\leq |(A\cup
			C)\setminus V(\mathfrak{S}_0)|+\mu^3 n/16$.
		\end{claim}
		\begin{poc}
			Assume for the sake of a contradiction that
			\begin{equation}\label{gertie0}
			|(B\cup D)\setminus V(\mathfrak{S}_0)|> |(A\cup C)\setminus V(\mathfrak{S}_0)|+\mu^3 n/16.
			\end{equation}
			We will distinguish between values of $i$ for which~\ref{sphprop2} holds with enough extra room that it cannot be the barrier to adding another sphere to $\mathfrak{S}_0$ and values of $i$ for which~\ref{sphprop2} is closer to equality. To this end, let $X\subset[m]$ be the set of values of $i$ for which $|D_i\cap V(\mathfrak{S}_0)|>|C_i\cap
			V(\mathfrak{S}_0)|-4k$, or equivalently for which $|D_i\setminus V(\mathfrak{S}_0)|<|C_i\setminus V(\mathfrak{S}_0)|+4k$, and let $\tilde D=(\cup_{i\not\in X}D_i)\setminus V(\mathfrak{S}_0)$.
			It follows from~\ref{sphprop2} that
			\begin{align}
			|\tilde D|&=|D\setminus V(\mathfrak{S}_0)|-\sum_{i\in X}|D_i\setminus V(\mathfrak{S}_0)| \nonumber \\
			&>|D\setminus V(\mathfrak{S}_0)|-\sum_{i\in X}(|C_i\setminus V(\mathfrak{S}_0)|+4k) \nonumber \\
			&\geq |D\setminus V(\mathfrak{S}_0)|- |C\setminus V(\mathfrak{S}_0)|-4mk \nonumber \\
			&>|D\setminus V(\mathfrak{S}_0)|- |C\setminus V(\mathfrak{S}_0)|-\e n.\label{gertie1}
			\end{align}
			Finally, let $\tilde B=B\setminus V(\mathfrak{S}_0)$ and note that by~\eqref{gertie0} and
			\eqref{gertie1}, we have
			\begin{align*}|\tilde B\cup \tilde D|&\geq|(B\cup D)\setminus V(\mathfrak{S}_0)|- |C\setminus V(\mathfrak{S}_0)|-\e n\\
			&>\mu^3n/16-\e n>\mu^3 n/32.\end{align*}
			
			Thus, by Claim~\ref{usefulclaim}, there are at least $\mu^3 n/64$ vertices
			$x\in \tilde B\cup \tilde D$ for which there are at least \[(1/3+\mu/4) n-|(A\cup C)\cap V(\mathfrak{S}_0)|\] vertices in
			$(A\cup C)\setminus V(\mathfrak{S}_0)$ with at least \[(1/3+\mu/4) n-|(A\cup C)\cap V(\mathfrak{S}_0)|\] neighbours in $(A\cup C)\setminus V(\mathfrak{S}_0)$ in the green link graph $GL(x)$. Since $|(A\cup C)\cap V(\mathfrak{S}_0)|<(1/3+\e/3)n$ by \eqref{ratio}, we must have 
			$|(A\cup C)\setminus V(\mathfrak{S}_0)|>\mu n/8$.
			
			By the maximality of $\mathfrak{S}_0$, there are no green spheres with exactly $4k$ vertices in $\tilde B\cup \tilde D$ and $2k+2$
			vertices in $(A\cup C)\setminus V(\mathfrak{S}_0)$, since such a sphere would be disjoint from all other spheres in $\mathfrak{S}_0$, and, by the choice of $\tilde D$,
			adding it to $\mathfrak{S}_0$ would not violate~\ref{sphprop2}. By Lemma~\ref{mopbad} applied to the green edges of $\HH$ meeting two vertices of $(A\cup C)\setminus V(\mathfrak{S}_0)$ and one vertex of $\tilde B\cup \tilde D$, with $N=\mu n/8$, $\beta=\mu^2/8$ and $\gamma=\mu/4$, we have
			\begin{align*}
			(1/3+\mu/4) n-|(A\cup C)\cap V(\mathfrak{S}_0)|&\leq
			(1/2+\mu/4)|(A\cup C)\setminus V(\mathfrak{S}_0)|\\
			&\leq|(A\cup C)\setminus V(\mathfrak{S}_0)|/2+\mu n/4.
			\end{align*}
			We therefore have the following property.
			\begin{equation}
			n-3|(A\cup C)\cap V(\mathfrak{S}_0)|\leq 3|(A\cup C)\setminus V(\mathfrak{S}_0)|/2.\label{ineq3}
			\end{equation}
				
			Adding~\eqref{ineq1},~\eqref{ineq2} and~\eqref{ineq3} together, and rearranging, we have
			\begin{align*}
			|(B\cup D)\setminus V(\mathfrak{S}_0)|&\leq |(A\cup C)\setminus V(\mathfrak{S}_0)|/2+\e n \\
			& \leq |(A\cup C)\setminus V(\mathfrak{S}_0)|+\mu^3 n/16,
			\end{align*}
			contradicting~\eqref{gertie0} and completing the proof of the claim.
		\end{poc}
		
		Now, for each $i\in [m]$, let $C_i'=C_i\setminus V(\mathfrak{S}_0)$ and choose a set
		$D_i'\subset D_i\setminus V(\mathfrak{S}_0)$ be a set of size $|C'_i|$, noting that such a choice is made possible by~\ref{sphprop2}. Let $A'=A\setminus V(\mathfrak{S}_0)$ and \[B'=(B\cup D)\setminus \left(\left(\bigcup_{i\in
			[m]}D_i'\right)\cup V(\mathfrak{S}_0)\right),\]  and note that by Claim~\ref{keyclaim}, we have
		\begin{align}|B'|&=|(B\cup D)\setminus V(\mathfrak{S}_0)|-\sum_{i\in[m]}|D'_i|\nonumber \\
		&\leq |(A\cup C)\setminus V(\mathfrak{S}_0)|-\sum_{i\in[m]}|C'_i|+\mu^3 n/16\nonumber \\
		&=|A'|+\mu^3 n/16.\label{avsb}
		\end{align}
		Furthermore, note that the sets
		\begin{equation}\label{finalpartition}
		T_1,T_2,T_3,Z,V(\mathfrak{S}_0),A', B',C_1',\dots,C_l',D_1',\dots,D_l'
		\end{equation}
		constitute a partition of $V(\HH)$.
		
		\begin{claim}\label{coverv}
			There is a set $\mathfrak{S}_1$ of at most $\eta n/4$ vertex-disjoint green
			spheres in $\HH$ with vertices in $T_1\cup Z\cup Y_0$ which cover all but at most
			$\mu^3n/4$ vertices in $T_1\cup Z\cup Y_0$.
		\end{claim}
		\begin{poc}
			Set $n'=|Y_1|=\dots=|Y_l|$ and note that we have \[(1-\e^2/100)n/l\leq n'\leq n/l.\] Since $R[\mathcal Z]$ has a perfect matching, there exist distinct integers $a_1, a_2, \dots, a_r$ and $b_1,b_2, \dots, b_r$ such that \[Z=\bigcup_{i=1}^r(Y_{a_i}\cup Y_{b_i}),\] where each pair $(Y_{a_i},Y_{b_i})$ is $(\e^2/100)$-regular with density at least
			$\e^2/10$. In particular, for any subsets $P\subset Y_{a_i}$ and $Q\subset Y_{b_i}$ with $|P|,|Q|\ge \e^2n'/100$, there are at least
			$9\e^2|P||Q|/100$ pairs $p\in P$, $q\in Q$ for which there are at least $\e^3 n$ vertices $t\in T_1$ so that $pqt$ is a green edge of $\HH$.
			Consequently, if $T_1'$ is any set obtained by removing at most $\e^4 n$ vertices from $T_1$, the induced $3$-partite $3$-graph of green edges of $\HH$ between
			$P$, $Q$ and $T_1'$ contains at least $\e^5|P||Q||T_1'|$ edges.
			
			For each $i\in [r]$ in turn, we use Lemma~\ref{doubpy} to find at most $\eta n'/2$ green spheres in $Y_{a_i}\cup Y_{b_i}\cup T_1'$, each with two vertices in $T_1'$, covering at
			least $(1-\e^2/100)n'$ vertices in each of $Y_{a_i}$ and $T_{b_i}$, where $T_1'\subset T_1$ is the dynamically-updated set of vertices not yet used;
			at each stage, note that we we have $|T_1\setminus T_1'|\le\eta n/2\le\e^4 n$.
			
			The iterative procedure above gives us a total of at most $\eta|Z|/4\leq \eta n/4$ spheres covering all but at most
			$\e^2|Z|/100$ vertices of $Z$. Since $|T_1|\le \mu^3n/8$ and $|Y_0|\leq\e^2n/100$, all but at most \[\mu^3n/8+\e^2n/50<\mu^3n/4\] vertices in
			$T_1\cup Z\cup Y_0$ are covered, as required.
		\end{poc}
		
		\begin{claim}
			There is a set $\mathfrak{S}_2$ of at most $\eta n/4$ vertex-disjoint green spheres with vertices in $T_2\cup (\cup_{i\in [m]}(C'_i\cup D_i'))$
			which cover all but at most $\mu^3 n/4$ vertices in $T_2\cup (\cup_{i\in
				[m]}(C'_i\cup D_i'))$.
		\end{claim}
		\begin{poc}
			As we did in the proof of Claim~\ref{coverv}, for each $i\in[m]$ in turn, we may find at most $\eta n'/2$ green spheres covering all but at least $(1-\e^2/100)n'$ vertices
			in each of $C'_i$ and $D'_i$, using only vertices from those sets and vertices from $T_2$ which have not previously been used.
			This covers all but at most \[\mu^3n/8+\e^2n/100<\mu^3n/4\] vertices in $T_2\cup (\cup_{i\in [m]}(C'_i\cup D_i'))$, using at most $\eta n/4$
			vertex-disjoint green spheres, establishing the claim.
		\end{poc}
		\begin{claim} There is a set $\mathfrak{S}_3$ of at most $\eta n/4$
			vertex-disjoint green spheres with vertices in $T_3\cup A'\cup B'$ which
			cover all but at most $\mu^3 n/4$ vertices in $T_3\cup A'\cup B'$.
		\end{claim}
		\begin{poc}
			Choose partitions $A'=A_1\cup A_2\cup A_3$ and $B'=B_1\cup B_2$ such that $|A_1|=|B_1|$, $|A_2|=|A_3|$ and $|B_2|\leq \mu^3n/16$, noting that this is made possible by~\eqref{avsb}.
			
			First we show that we may find at most $\eta n/8$ vertex-disjoint green spheres in $T_3\cup A_1\cup B_1$ covering all but at most $\mu^3n/64$ vertices in each of
			$A_1$ and $B_1$, with each sphere using two vertices from $T_3$. We may of course assume that $|A_1|=|B_1|\geq \mu^3n/64$ as there is nothing to prove otherwise. Suppose $A_1'\subset A_1$ and $B_1'\subset B_1$ each have size at least $\mu^4n$. By the definition of $A$, there are at most $\e n^3$ red edges of $\HH$ meeting $A_1'$,
			and since $\HH$ is $(\e,\mu)$-coloured, there are at most $\e n^3$ uncoloured edges in $\HH$. Hence, at most $6\e n^2$ pairs $(a,b)$ with $a\in A_1'$ and $b\in B_1'$ are in at
			least $n/3$ non-green edges of $\HH$, so at least $|A_1'||B_1'|-6\e n^2>|A_1'||B_1'|/2$ pairs are in at least $\e^3 n$ green edges with the third vertex in $T_3$.
			By Lemma~\ref{doubpy}, we can find the required spheres.
			
			Similarly, since we have $|T_3\setminus T'_3|\le\eta n/4<\e^4n$, where $T'_3$ is the set of unused vertices from $T_3$ after almost covering $A_1 \cup B_1$ as above, we may find at most $\eta n/8$ vertex-disjoint green spheres in $T'_3\cup A_2\cup A_3$ covering all but at most $\mu^3n/64$ vertices in each of $A_2$ and $A_3$. In total, these spheres cover all but
			at most \[\mu^3n/8+\mu^3n/16+4\mu^3n/64=\mu^3n/4\] vertices in $T_3\cup A'\cup B'$.
		\end{poc}
		Therefore, recalling the partition in~\eqref{finalpartition}, it is now clear that $\mathfrak{S}_0\cup
		\mathfrak{S}_1\cup \mathfrak{S}_2\cup \mathfrak{S}_3$ is a set of at most $\eta n$ vertex-disjoint
		green spheres covering all but at most $\mu^3 n$ vertices in $\HH$, as required.
	\end{proof}
	
	\subsection{Proof of the main result}
	
	We have now gathered all the ingredients we require to complete the proof of our main result.
	
	\begin{proof}[Proof of Theorem~\ref{mainthm}]
		By Theorem~\ref{classify}, if the surface $\SS$ we wish to find a spanning copy of is not homoeomorphic to $\mathbb S^2$, then it is homeomorphic either to a connected sum of $g$ tori or to a connected sum of $g$ projective planes for some integer $g\geq 1$; if $\SS$ is homeomorphic to $\mathbb S^2$, then we set $g=0$.
		
		Let $1/n\ll \eta \ll \al$, $1/k\ll \e\ll \mu\ll 1/g$ (with the convention that if $g=0$, then we replace $1/g$ by $1$), and suppose that $\HH$ is a $3$-graph on $n$ vertices with $\codeg(\HH) \ge n/3 + \mu n$. We construct a spanning copy of $\SS$ in $\HH$ as follows.
		
		\smallskip
		\noindent \textbf{Colour the edges of $\HH$.} By Lemma~\ref{colourthm}, for $1/n \ll \al, 1/k \ll \e \ll \mu$, we can colour some of the edges of $\HH$ red and green to get an $(\e,\mu)$-coloured $3$-graph in which any monochromatic disjoint pair of edges is $(\al,k)$-connectible.
		
		\smallskip
		\noindent \textbf{Construct reservoirs.} Next, we choose two disjoint subsets $R_1,R_2\subset V(\HH)$ with $|R_1|=|R_2|=\mu n/144$ and the following additional properties.
		
		\begin{enumerate}[label = {\bfseries{D\arabic{enumi}}}]
			\item\label{P1} For any pair $(e,f)$ of disjoint green edges of $\HH$, and any subset $F\subset R_1$ with $|F|\leq 4\eta k n$, there exists a set $A\subset R_1\setminus F$ with $|A|\leq k$ for which there is a sphere with vertex set $V(e)\cup V(f)\cup A$ containing $e$ and $f$.
			\item\label{P2} For each pair of vertices $x,y\in V(\HH)$, there are at least $(1/3+\mu/2)|R_2|$ vertices $u$ in $R_2$ such that $uxy\in E(\HH)$.
			\item\label{P3} There are at least $\mu |R_2|/8$ vertices in $R_2$ contained in fewer than $\e n^2$ red edges each.
		\end{enumerate}
		
		To show that such a choice of $R_1$ and $R_2$ is possible, we employ the probabilistic method: we pick two disjoint subsets $R'_1,R'_2\subset V(\HH)$ with $|R'_1|=|R'_2|=\mu n/144$ uniformly at random, and show that they satisfy each of the above properties with probability at least $3/4$.
		
		First, let us verify that that $R'_1$ satisfies~\ref{P1} with probability at least $3/4$. Let $e$ and $f$ be two arbitrary disjoint green edges. Since both these edges are green, $e$ and $f$ are $(\al,k)$-connectible, so by Lemma~\ref{disjoint}, for some $l$ with $1\leq l\leq k$, there are $r\ge \al n/l$ pairwise disjoint sets $A_1, A_2, \dots,A_r\subset V(\HH)$ for which there is a sphere with vertex set $V(e)\cup V(f)\cup A_i$ containing $e$ and $f$. Each of these sets independently has probability $(\mu/144)^l$ of being contained in $R'_1$, so Proposition~\ref{chernoff} implies that at least $\al(\mu/144)^ln/(2l)$ of the $A_i$ are contained in $R'_1$ with probability $1-o(n^{-6})$. In this case, for any $F\subset R'_1$ with $|F|\leq 4\eta k n$, since each vertex in $F$ is in at most one of the disjoint sets $A_i$, at least \[\al(\mu/144)^ln/(2l)-|F|>0\] of the $A_i$ lie entirely within $R'_1\setminus F$. Since there are $O(n^6)$ possible pairs $(e,f)$, it follows that~\ref{P1} holds with high enough probability.
		
		Next, we prove that $R'_2$ satisfies~\ref{P2} with probability at least $3/4$. For a given pair of vertices $x,y\in V(\HH)$, at least $(1/3+\mu)n$ vertices form an edge with $x$ and $y$ in $\HH$. Each such vertex independently has probability $|R'_2|/n$ of being in $R'_2$. Consequently, Proposition~\ref{chernoff} implies that at least $(1/3+\mu/2)|R'_2|$ such vertices end up in $R'_2$ with probability $1-o(n^{-2})$; it follows from the union bound that~\ref{P2} holds with high enough probability.
		
		Finally, we prove that $R'_2$ satisfies~\ref{P3} with probability at least $3/4$. By virtue of being $(\e,\mu)$-coloured, there are at least $\mu n/4$ vertices in fewer than $\e n^2$ red edges in $\HH$, and each such vertex ends up in $R'_2$ with probability $|R'_2|/n$, so Proposition~\ref{chernoff} again implies that at least $\mu|R'_2|/8$ such vertices are in $R'_2$ with very high probability, proving that~\ref{P3} also holds with high enough probability.
		
		Going forward, we fix these two disjoint sets $R_1$ and $R_2$ with the properties described above and set $R=R_1\cup R_2$.
		
		\smallskip
		\noindent \textbf{Build absorbers.} Appealing to Lemma~\ref{greenabsorb}, we find a collection $\mathfrak{S}_1$ of at most $\eta n$ vertex-disjoint green-tinged spheres disjoint from $R$, which can absorb any subset of $R$ and which have at most $\mu n/9$ vertices in total; let $U$ be the set of vertices belonging to spheres in $\mathfrak{S}_1$.
		
		\smallskip
		\noindent \textbf{Find an almost-spanning collection of spheres.} Let $\GG= \HH[V(\HH)\setminus (R\cup U)]$ with colours inherited from $\HH$. Note that because $|R\cup U|\leq\mu n/8$, we have $|V(\GG)| \geq (1-\mu/8)n$, so \[\codeg(\GG)\geq(1/3+7\mu/8)n>(1/3+\mu/2)|V(\GG)|.\] Additionally, $\GG$ has at most $\e n^3<2\e |V(\GG)|^3$ uncoloured edges and at most $\e n^4< 2\e|V(\GG)|^4$ pairs of touching red and green edges. Finally, $\GG$ has at least $\mu n/8$ vertices in fewer than $\e n^2<2\e |V(\GG)|^2$ red edges. Consequently $\GG$ is $(2\e,\mu/2)$-coloured; in particular it contains at least $\mu n^3/100$ green edges. If $\SS$ is not homeomorphic to $\mathbb S^2$, then we employ Theorem~\ref{kov} to obtain a collection $\mathfrak{G}$ of $g$ green copies of $\TT$ or $\PP$ in $\GG$ as appropriate, using at most $12g$ vertices; if $\SS$ is homeomorphic to $\mathbb S^2$, then we set $\mathfrak{G}=\emptyset$.
		
		Easily, the $3$-graph $\GG'$ obtained by removing vertices of surfaces in $\mathfrak{G}$ from $\GG$ is $(3\e,\mu/3)$-coloured, so by Lemma~\ref{almost-cover}, we can find a collection $\mathfrak{S}_2$ of at most $\eta n$ vertex-disjoint green spheres in $\GG'$ which cover all but at most $\mu^3 n$ of the vertices of $\GG'$. Let $B$ be the set of vertices in $V(\HH)\setminus (R\cup U)$ which are not in any surface in $\mathfrak{G}\cup\mathfrak{S}_2$, and write $W$ for $V(\mathfrak{G}\cup \mathfrak{S}_2)$.
		
		To summarise, we have now decomposed the vertices of $\HH$ into $R=R_1 \cup R_2, U=V(\mathfrak{S}_1), W=V(\mathfrak{G}\cup \mathfrak{S}_2)$ and $B$, as shown in Figure~\ref{decomp}.
		
		\begin{figure}\begin{center}
				\begin{tikzpicture}[scale = 0.25]
				
				\filldraw[shift={(-33,8)},fill=lightgray,very thick,draw=black] (0,0) rectangle (18,6);
				\draw[shift={(-33,8)},thick,draw=black] (9,0) -- (9,6);
				\node at (-28.25,11) {\large $R_1$};
				\node at (-19.5,11) {\large $R_2$};
				\draw[shift={(-33,8)},->,thick,dashed,draw=black] (9.5,-1) -- (9.5,-4);
				\draw[shift={(-33,8)},->,thick,dashed,draw=black] (8.5,-1) -- (8.5,-4);
				
				\draw[shift={(-33,-5)},thick,black,dotted] (9,0) circle (8);
				\node at (-23.75,-5) {\large $U$};
				\draw[shift={(-24+4.4,-5)}, thick,black] (0,0) circle (3);
				\draw[shift={(-24+4.4,-5)}, thick,green] (0,-3) .. controls (1.2,-0.5) .. (0,3);
				\draw[shift={(-24+4.4,-5)}, thick,green] (-3,0) .. controls (0.8,-0.5) .. (3,0);
				
				\draw[shift={(-24-2.5,-5+3.5)}, thick,black] (0,0) circle (3);
				\draw[shift={(-24-2.5,-5+3.5)}, thick,green] (0,-3) .. controls (1.2,-0.5) .. (0,3);
				\draw[shift={(-24-2.5,-5+3.5)}, thick,green] (-3,0) .. controls (0.8,-0.5) .. (3,0);
				
				\draw[shift={(-24-2.5,-5-3.5)}, thick,black] (0,0) circle (3);
				\draw[shift={(-24-2.5,-5-3.5)}, thick,green] (0,-3) .. controls (1.2,-0.5) .. (0,3);
				\draw[shift={(-24-2.5,-5-3.5)}, thick,green] (-3,0) .. controls (0.8,-0.5) .. (3,0);
				
				\filldraw[shift={(12,-10)},fill=lightgray,very thick,draw=black] (0,0) rectangle (6,18);
				\node at (15,-1.5) {\large $B$};
				
				\draw[shift={(-2,-1)},thick,black,dotted] (0,0) circle (11.5);
				\node at (-2,-1) {\large $W$};
				
				\draw[shift={(-2+2,-1+6.5)}, thick,green] (0,0) circle (3);
				\draw[shift={(-2+2,-1+6.5)}, thick,green] (0,-3) .. controls (1.2,-0.5) .. (0,3);
				\draw[shift={(-2+2,-1+6.5)}, thick,green] (-3,0) .. controls (0.8,-0.5) .. (3,0);
				
				\draw[shift={(-2+2,-1-6.5)}, thick,green] (0,0) circle (3);
				\draw[shift={(-2+2,-1-6.5)}, thick,green] (0,-3) .. controls (1.2,-0.5) .. (0,3);
				\draw[shift={(-2+2,-1-6.5)}, thick,green] (-3,0) .. controls (0.8,-0.5) .. (3,0);
				
				\draw[shift={(-2-5.5,-1+4)}, thick,green] (0,0) circle (3);
				\draw[shift={(-2-5.5,-1+4)}, thick,green] (0,-3) .. controls (1.2,-0.5) .. (0,3);
				\draw[shift={(-2-5.5,-1+4)}, thick,green] (-3,0) .. controls (0.8,-0.5) .. (3,0);
				
				\draw[shift={(-2-5.5,-1-4)}, thick,green] (0,0) circle (3);
				\draw[shift={(-2-5.5,-1-4)}, thick,green] (0,-3) .. controls (1.2,-0.5) .. (0,3);
				\draw[shift={(-2-5.5,-1-4)}, thick,green] (-3,0) .. controls (0.8,-0.5) .. (3,0);
				
				\draw[shift={(4.5,-1)},thick,green] (-3.5,0) .. controls (-3.5,2) and (-1.5,2.5) .. (0,2.5);
				\draw[shift={(4.5,-1)},thick,green,xscale=-1] (-3.5,0) .. controls (-3.5,2) and (-1.5,2.5) .. (0,2.5);
				\draw[shift={(4.5,-1)},thick,green,rotate=180] (-3.5,0) .. controls (-3.5,2) and (-1.5,2.5) .. (0,2.5);
				\draw[shift={(4.5,-1)},thick,green,yscale=-1] (-3.5,0) .. controls (-3.5,2) and (-1.5,2.5) .. (0,2.5);
				\draw[shift={(4.5,-1)},thick,green] (-2,.2) .. controls (-1.5,-0.3) and (-1,-0.5) .. (0,-.5) .. controls (1,-0.5) and (1.5,-0.3) .. (2,0.2);
				\draw[shift={(4.5,-1)},thick,green] (-1.75,0) .. controls (-1.5,0.3) and (-1,0.5) .. (0,.5) .. controls (1,0.5) and (1.5,0.3) .. (1.75,0);
				
				\end{tikzpicture}
			\end{center}\label{decomp}
			\caption{The situation after finding an almost-spanning collection of spheres; the arrows signify the ability of the spheres in $U$ to absorb any subset of $R$.}
		\end{figure}
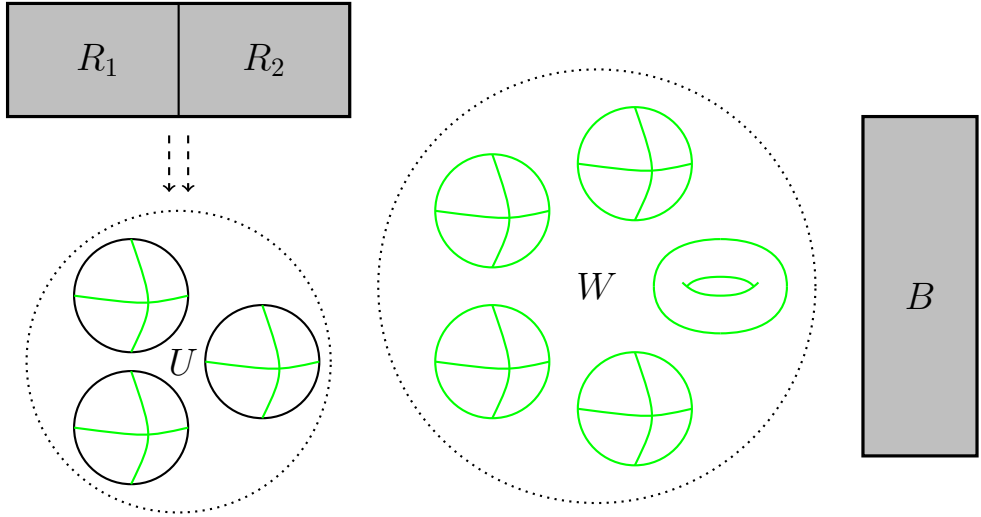
		
		\smallskip
		\noindent \textbf{Mop up uncovered vertices using the reservoir $R_2$.} Next, we show that we can mop up the set $B$ of uncovered vertices with green-tinged spheres using vertices in $B \cup R_2$ alone.
		\begin{claim}
			$\HH[B\cup R_2]$, with colours inherited from $\HH$, is $(144^4\e/\mu^4,\mu/3)$-coloured.
		\end{claim}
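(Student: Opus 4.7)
The plan is to verify the four defining conditions of an $(\e',\mu')$-coloured $3$-graph with $\e'=144^{4}\e/\mu^{4}$ and $\mu'=\mu/3$ for $\HH[B\cup R_2]$, keeping in mind that its colouring is simply the restriction of the colouring of $\HH$.

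First I would establish a basic size estimate. Since $|R_2|=\mu n/144$ and $|B|\le\mu^{3}n$ by the choice of $\mathfrak S_2$ in the previous step, writing $N=|B\cup R_2|$ we have $\mu n/144\le N\le (1+144\mu^{2})\mu n/144$; for $\mu$ small this yields the two estimates $n\le (144/\mu)N$ and $|R_2|\ge 2N/3$, which will be used throughout. Next, for the codegree bound, I would take any two vertices $x,y\in B\cup R_2$ and invoke property~\ref{P2} to obtain at least $(1/3+\mu/2)|R_2|$ vertices $u\in R_2\subseteq B\cup R_2$ with $xyu\in E(\HH)$; subtracting at most $2$ to account for $x,y$ themselves, the gap of order $\mu|R_2|$ between $(1/3+\mu/2)|R_2|$ and $(1/3+\mu/3)N$ easily absorbs the $O(\mu^{2})|R_2|$ correction coming from $N\le (1+144\mu^{2})|R_2|$, yielding $\codeg(\HH[B\cup R_2])\ge (1/3+\mu/3)N$.

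For the two colouring conditions, I would bound the quantities restricted to $B\cup R_2$ by the corresponding totals in $\HH$ and convert using $n\le (144/\mu)N$. The number of uncoloured edges in $\HH[B\cup R_2]$ is at most $\e n^{3}\le (144/\mu)^{3}\e N^{3}\le (144^{4}/\mu^{4})\e N^{3}$, and the number of touching red-green pairs in $\HH[B\cup R_2]$ is at most $\e n^{4}\le (144/\mu)^{4}\e N^{4}=(144^{4}\e/\mu^{4})N^{4}$; this last bound is precisely where the constant $144^{4}$ is forced. Finally, by property~\ref{P3} there are at least $\mu|R_2|/8$ vertices of $R_2$ each contained in fewer than $\e n^{2}\le (144^{4}/\mu^{4})\e N^{2}$ red edges of $\HH$, and hence the same number of red edges of $\HH[B\cup R_2]$; using $|R_2|\ge 2N/3$ we have $\mu|R_2|/8\ge \mu N/12=(\mu/3)N/4$, which supplies enough such vertices.

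Since each of the four conditions reduces directly to elementary arithmetic once the reservoir properties \ref{P2}, \ref{P3} and the size bound $|B|\le\mu^{3}n$ are in place, there is no real obstacle here; the only point demanding care is the bookkeeping of constants so that $n$-scale bounds convert cleanly to $N$-scale bounds with the claimed coefficient $144^{4}/\mu^{4}$, which is what motivates the specific choice $|R_2|=\mu n/144$ made earlier.
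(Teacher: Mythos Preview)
Your proposal is correct and follows essentially the same approach as the paper: both verify the four defining conditions directly, converting the $n$-scale bounds from \ref{P2}, \ref{P3}, and the $(\e,\mu)$-colouring of $\HH$ into $N$-scale bounds via the two-sided estimate on $|B\cup R_2|$. Your bookkeeping is slightly tidier in places (for instance, you write the upper bound on $N$ as $(1+144\mu^2)\mu n/144$, which follows directly from $|B|\le\mu^3 n$, whereas the paper's stated bound $\mu n/144+\mu^2 n/144^2$ implicitly uses $\mu\ll 1$ more heavily), and the subtraction of $2$ in the codegree check is harmless but unnecessary since any $u$ with $uxy\in E(\HH)$ is automatically distinct from $x$ and $y$.
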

		\begin{poc}
			Indeed, this $3$-graph has $m$ vertices where $ m=|B\cup R_2| \ge |R_2|$, so
			\[\mu n/144\leq m<\mu n/144+\mu^2n/144^2.\] By~\ref{P2}, each pair of vertices in $B\cup R_2$ has codegree at least $(1/3+\mu/2)\mu n/144>(1/3+\mu/3)m$ in $\HH[B\cup R_2]$. Also, $\HH[B\cup R_2]$ certainly contains at most \[\e n^3\leq 144^3\e m^3/\mu^3< 144^4\e m^3/\mu^4\] uncoloured edges, and at most $\e n^4\leq144^4\e m^4/\mu^4$ pairs of differently-coloured touching edges. Finally,~\ref{P3} ensures that at least $\mu|R_2|/8>\mu m/12$ vertices are in fewer than $\e n^2\leq144^2\e m^2/\mu^2$ red edges.
		\end{poc}
		
		Consequently, it follows from Corollary~\ref{greensphere} that there is a collection $\mathfrak{S}_3$ of at most $\eta n$ vertex-disjoint green-tinged spheres in $\HH[B\cup R_2]$ which cover all the vertices in $B$.
		
		\smallskip
		\noindent \textbf{Connect into a single spanning surface using the reservoir $R_1$.}
		Order the surfaces in $\mathfrak{S}_1 \cup \mathfrak{S}_2 \cup \mathfrak{S}_3 \cup \mathfrak{G}$ as $S_1, S_2 \dots, S_l$ where $\mathfrak{S}_1=\{S_1, S_2 \dots,S_q\}$; since $\mathfrak{S}_i$ contains at most $\eta n$ spheres for each $1 \le i \le 3$, and $|\mathfrak{G}|\leq g$, we have $l\leq 4\eta n$.
		
		Since $V(\mathfrak{S}_1)=U$, $V(\mathfrak{G})$ and $V(\mathfrak{S}_2)$ are disjoint and $W = (V(\mathfrak{G}) \cup V(\mathfrak{S}_2)) \subset V(\HH)\setminus (R\cup U)$, and $V(\mathfrak{S}_3)\subset B\cup R_2$, where $B=V(\HH)\setminus (R\cup U\cup W)$, all the above surfaces are disjoint both from each other and from $R_1$. By the choice of $\mathfrak{S}_1$, for each sphere $S_i$ with $1\leq i\leq q$, there are two green edges $e_i,f_i\in S_i$ and a set of vertices $Q_i\subset R$ such that for any set $Q'\subset Q_i$, there is a sphere on the vertex set $V(S_i)\cup Q'$ containing $e_i$ and $f_i$, where the $Q_i$ are pairwise disjoint and $\cup_{i=1}^qQ_i=R$. Every sphere in $\mathfrak{S}_2\cup \mathfrak{S}_3$ is green-tinged, as is every surface in $ \mathfrak{G}$, so for each $i$ with $q<i\leq l$, we again choose two different green edges $e_i,f_i \in S_i$.
		
		For each $1 \le i \le l-1$ in turn, by~\ref{P1}, we find a sphere $\hat S_i$ containing $f_i$ and $e_{i+1}$ on the vertex set $V(f_i)\cup V(e_{i+1})\cup A_i$, where $|A_i|\leq k$ and $A_i\subset R_1\setminus F_i$, where $F_i$ is a nested sequence of sets given by $F_i=\cup_{j<i}A_j$; note that $|F_i|<ik<lk \leq 4\eta n k$, so this is indeed possible. The role of these spheres $\hat S_i$ will be to connect the $S_i$ together into a single surface. This works as follows: if we remove $f_i$ and $e_{i+1}$ from $\hat S_i$ as well as from $S_i$ and $S_{i+1}$ respectively, then the rest of  $\hat S_i$ yields a cylindrical tube that helps us form the connected sum of $S_i$ and $S_{i+1}$.
		
		However, in addition to gluing the $S_i$ together, we still need to take care of some uncovered vertices in $R$. Here, we rely on the properties of our absorbers, which allows us to incorporate them into the $S_i$ without affecting our gluing plans. Indeed, writing $F = \cup_{j<l}A_j$, let $R'_1=R_1\setminus F$ be the set of vertices in $R_1$ not used in any of the connecting spheres $\hat S_i$ for $1 \le i \le l-1$, and let $R'_2$ be the set of vertices in $R_2$ not used in any sphere in $\mathfrak{S}_3$, i.e., $R'_2=B\cup R_2\setminus V(\mathfrak{S}_3)$. For each $i\leq q$, let $Q'_i=Q_i\cap(R'_1\cup R_2')$, and let $S'_i$ be a sphere on vertex set $V(S_i)\cup Q'_i$ containing $e_i$ and $f_i$. For each $i>q$, set $S'_i=S_i$.
		
		To finish, we observe that the symmetric difference of the union of the edge-sets of all the $S'_i$ and all the $\hat S_i$, or in other words, the edge set
		\[\left(\bigcup_{j=1}^l S'_j\right)\cup\left(\bigcup_{j=1}^{l-1}\hat S_j\right)\setminus\{e_2,e_3,\dots,e_l,f_1, f_2, \dots,f_{l-1}\},\]
		yields a surface homeomorphic to $\SS$. The vertex set of this copy of $\SS$ is
		\[V' = V(\mathfrak{S}_1)\cup R'\cup V(\mathfrak{S}_2\cup\mathfrak{G})\cup V(\mathfrak{S}_3)\cup F.\]
		Finally, since
		\begin{align*}V(\mathfrak{S}_1)\cup R'&=U\cup R'_1\cup R'_2,\\
		V(\mathfrak{S}_2\cup\mathfrak{G})&=V(\HH)\setminus(R_1\cup R_2\cup U\cup B),\\
		V(\mathfrak{S}_3)&=B\cup R_2\setminus R'_2,\text{ and}\\
		F&=R_1\setminus R'_1,
		\end{align*}
		we see that $V'=V(\HH)$, so the copy of $\SS$ that we have found is spanning, proving the result.
	\end{proof}
	
	\section{Conclusion}\label{s:conc}
	In this paper, our main result, Theorem~\ref{mainthm}, asymptotically determines the minimum codegree guaranteeing the existence of a spanning copy of the sphere in a $3$-graph. Several natural questions remain; to begin with, it would be nice to eliminate the error term in our result, and in this direction, we conjecture the following.
	
	\begin{conjecture}
		Every $3$-graph $\HH$ on $n>3$ vertices with $\codeg(\HH) > n/3$ contains a spanning copy of the sphere $\mathbb{S}^2$.
	\end{conjecture}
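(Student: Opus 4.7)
My approach is a stability-plus-extremal framework, using Theorem~\ref{mainthm} as a starting point and handling the narrow gap near the codegree threshold separately. The strategy mirrors the standard approach to converting asymptotic Dirac-type results into exact ones.

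\emph{Step 1: Stability.} First I would prove a stability companion to Theorem~\ref{mainthm}: there exist $\mu_0 > 0$ and $n_0 \in \N$ such that, for every $n \ge n_0$, if $\codeg(\HH) \ge (1/3 - \mu_0) n$ and $\HH$ has no spanning sphere, then $\HH$ is within edit distance $\nu n^3$ of one of the two explicit extremal families identified in the introduction: the tripartite construction of Figure~\ref{fig:tripart} with three tight components on vertex classes $X,Y,Z$ of near-equal size, or the parity construction on a bipartition $X \cup Y$ with $|X|$ close to $(2n-4)/3$ and edges meeting $X$ in an odd number of vertices. The natural route is to refine the colouring analysis of Lemma~\ref{colourthm}: when $\mu_0$ is small, the graph $G'$ constructed there must be very close to either red-green bipartite (the parity case) or partitioned in a way that forces three disjoint tight components (the tripartite case).

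\emph{Step 2: Non-extremal case.} For $\HH$ with $\codeg(\HH) > n/3$ that is $\nu$-far from both extremal families, I would run the proof of Theorem~\ref{mainthm} with the slack $\mu n$ replaced by a purely structural robustness extracted from stability. Concretely, being $\nu$-far from the tripartite and parity constructions implies quantitative `mixing': pairs of vertices admit many common neighbours across would-be colour classes, and enough touching edge-pairs span multiple colour classes to replace the $(\e,\mu)$-coloured hypothesis in Corollary~\ref{greensphere} and Lemma~\ref{almost-cover}. The absorption device of Lemma~\ref{greenabsorb}, which only needs many green-tinged double ladders through each vertex, should then go through with the weaker codegree hypothesis.

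\emph{Step 3: Near-extremal case.} This is the main obstacle. Suppose $\HH$ is $\nu$-close to the tripartite extremal graph on classes $X, Y, Z$. The condition $\codeg(\HH) > n/3$ forces, for every pair of vertices in the same class, at least one `wrong-pattern' edge (for instance, an edge internal to a class, or an edge that crosses a class-pair not represented in the extremal construction). One must show these forced edges are sufficient to stitch together three large spheres --- one largely within each pair of classes --- into a single spanning sphere. The plan is to develop a targeted connecting lemma: using a small set of reservoir vertices chosen so that every required cross-component transition is realisable by a short cylinder through a forced edge, and then applying an analogue of the absorber of Lemma~\ref{greenabsorb} where `green' is replaced by `cross-component forced'. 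The parity-extremal case is analogous: edges meeting $X$ in an even number of vertices play the role of forced flip-edges, and an Euler-characteristic count (as in the extremal discussion after Theorem~\ref{mainthm}) governs how many such flips are needed.

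\emph{Main obstacle.} The genuine difficulty is Step 3. In the near-extremal regime the forced edges are rare --- potentially only one or two per pair of vertices --- and highly constrained in where they live, so there is essentially no flexibility in how the sphere can be routed through them. One needs an absorber that tolerates a very sparse set of `special' edges per vertex; existing absorption machinery relies on quadratic abundance. A further subtlety is the accounting: the extremal graph fails to span by exactly a factor $3/2$ in one class, and the proof must close this gap using precisely the edges forced by the strict inequality $\codeg(\HH) > n/3$, with no room for error terms. Resolving both difficulties simultaneously, for both extremal families, is where the conjecture is likely to require genuinely new ideas beyond the asymptotic proof.
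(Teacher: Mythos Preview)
The statement you are attempting to prove is stated in the paper as a \emph{conjecture}, not a theorem: it appears in Section~\ref{s:conc} as an open problem, and the paper offers no proof of it. There is therefore no ``paper's own proof'' against which to compare your proposal. The paper only establishes the asymptotic version, Theorem~\ref{mainthm}, with the $\mu n$ error term, and explicitly records the exact statement as a direction for further work.

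Your proposal is not a proof but a programme, and you are candid about this: you identify Step~3 as requiring ``genuinely new ideas beyond the asymptotic proof'', which is precisely the point. The stability-plus-extremal framework you sketch is indeed the standard route from asymptotic to exact Dirac-type theorems, and Steps~1 and~2 are plausible in outline. But the heart of the difficulty --- handling the near-extremal case when the forced cross-component edges may be extremely sparse (one per pair, with no flexibility) --- is exactly where your plan runs out of content. You correctly note that the absorption machinery of Lemma~\ref{greenabsorb} needs quadratic abundance of usable edges, and that nothing in the paper supplies a substitute when only $O(1)$ forced edges per pair are available. Until that obstacle is overcome, what you have written is a reasonable description of why the conjecture is hard, not a proof of it.
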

	
	Just as natural is to ask what happens in `higher dimensions', or in other words, what the analogue of our main result for $r$-graphs ought to be for an arbitrary $r \in \N$. For any integer $r \ge 2$, the codegree of a set of $r-1$ vertices in an $r$-graph $\HH$ is the number of edges of $\HH$ containing the set in question, and writing $\delta_{r-1}(\HH)$ for the minimum codegree of an $r$-graph $\HH$, we conjecture the following.
	\begin{conjecture} \label{conj r}
		For each $r \ge 2$, any $r$-graph $\HH$ on $n>r$ vertices with $\delta_{r-1}(\HH) > n/r$ contains a spanning copy of the $(r-1)$-dimensional sphere $\ss^{r-1}$.
	\end{conjecture}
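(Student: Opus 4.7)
The plan is to imitate the four-stage architecture of the proof of Theorem~\ref{mainthm}: edge-colouring with strong monochromatic connectibility, absorption, an almost-spanning collection of spheres via regularity, and finally connection through a reservoir. The natural first step would be an $r$-graph analogue of Proposition~\ref{toycomp}: if $\delta_{r-1}(\HH) > n/r$ then $\HH$ contains a spanning tight component. One induces an edge-colouring of the complete $(r-1)$-graph on $V(\HH)$ by labelling each $(r-1)$-tuple with the tight component of the edges through it; any two $(r-1)$-tuples sharing $r-2$ vertices must receive the same label by the codegree hypothesis, so this colouring is monochromatic on each connected component of the associated ``adjacency graph'' of $(r-1)$-tuples, which is connected, and thus yields a single tight component in $\HH$.

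The central new ingredient is a generalization of Lemma~\ref{find-sphere}, producing triangulations of $\ss^{r-1}$ as iterated suspensions. For each $r \ge 3$, one would iteratively select pairs of apex vertices from a target set $T$, appeal to Lemma~\ref{averaging} at each step to maintain density in the common link graph, and then invoke Theorem~\ref{kov} with a long even cycle inside an iterated link; suspending this cycle $r-2$ times yields a triangulation of $\ss^{r-1}$ as desired. With this primitive in hand, the connectibility framework of Lemmas~\ref{touchpairs}--\ref{colourthm} should lift: one defines $(\alpha,k)$-connectibility between disjoint edges of $\HH$ and colours all but a vanishing fraction of edges with a bounded number of colours so that any monochromatic disjoint pair is connectible. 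Absorption (Lemma~\ref{greenabsorb}) generalizes similarly by replacing the double ladder with an appropriate $(r-1)$-dimensional bipartite gadget extracted from Theorem~\ref{kov}, and both the reservoir selection and the final connection step in the proof of Theorem~\ref{mainthm} should carry over essentially verbatim, provided the earlier ingredients are in place.

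The main obstacle I anticipate is the almost-spanning step. Lemma~\ref{almost-cover} relies on Szemer\'edi's graph regularity applied to a carefully chosen auxiliary link graph, together with the partition lemma~\ref{matchpart}, and executing the analogue for $r$-graphs appears delicate since hypergraph regularity is substantially weaker and the matching-partition structure is much harder to control at higher uniformities. A more promising route may be induction on $r$: for a typical pair $\{u,v\} \subset V(\HH)$, their common link is an $(r-2)$-graph inheriting a codegree bound close to $n/r$, so by induction one could try to find a spanning triangulation of $\ss^{r-3}$ inside it and suspend twice. However, matching the codegree hypothesis along the induction while simultaneously handling reservoir and absorber interactions is non-trivial, since the induction's codegree threshold drops away from the extremal $n/r$ as the link is taken. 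A second, equally serious, obstacle is the \emph{exact} bound: eliminating the error term $\mu n$ entirely, as demanded by Conjecture~\ref{conj r}, requires a stability analysis of near-extremal configurations --- one would have to classify $r$-graphs with codegree just above $n/r$ whose tight-component structure blocks the construction --- which is beyond the asymptotic techniques developed in this paper.
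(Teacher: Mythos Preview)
The statement you are attempting to prove is Conjecture~\ref{conj r}, and the paper does not prove it. It appears in Section~\ref{s:conc} (Conclusion) explicitly as an open problem; the paper's main result, Theorem~\ref{mainthm}, treats only the case $r=3$ and only with an asymptotic error term $\mu n$. There is therefore no ``paper's own proof'' to compare your proposal against.

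Your outline is a reasonable sketch of a research programme, and you correctly identify the two principal obstacles. First, the almost-spanning step: Lemma~\ref{almost-cover} relies on graph regularity applied to a carefully designed auxiliary graph together with the matching-partition Lemma~\ref{matchpart}, and there is no obvious higher-uniformity analogue of this combination. Your proposed inductive workaround via links does not succeed as stated, since the link of an $(r-2)$-set in an $r$-graph with $\delta_{r-1}>n/r$ is a \emph{graph} with minimum degree exceeding $n/r$, not an $(r-2)$-graph, and in any case the codegree threshold one would need for the inductive hypothesis is $n/(r-2)$ rather than $n/r$. Second, and more seriously, Conjecture~\ref{conj r} asks for the exact bound $\delta_{r-1}(\HH)>n/r$ with no error term; even for $r=3$ the paper only obtains $n/3+\mu n$ and states the exact version as a separate open conjecture. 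Removing the error term would require a stability and extremal analysis that the techniques of this paper do not supply.

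In short: your proposal is not wrong as a heuristic plan, but it is a plan for attacking an open problem, not a proof, and the paper makes no claim to have carried it out.
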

	That such a bound would be asymptotically best-possible is seen by the following construction generalising the one presented in Figure~\ref{figXYZ}. Given a positive integer $n$ divisible by $r$, let $X_1, X_2, \dots, X_r$, be $r$ disjoint sets of vertices of size $n/r$ each, and consider an $r$-graph $\HH$ on the vertex set $V = X_1 \cup X_2 \cup \dots \cup X_r$ constructed as follows. For a vertex $v\in V$, write $i(v)$ for the index such that $v\in X_{i(v)}$, and for a set $Y$ of $r$ vertices, let $i(Y) = \sum_{y\in Y} i(y) \imod r$; we then take the edge set of $\HH$ to consist precisely of those sets of $Y$ of $r$ vertices for which $i(Y) = 1$. Note that each `pattern' comprising $r$ choices among the $X_i$ (with repetitions allowed) gives rise to a distinct tight component in $\HH$, none of which are spanning because $\sum_{1\leq i \leq r} i \neq 1 \pmod r$; here, as before, we say that two edges in an $r$-graph touch if they meet in $r-1$ vertices, and a tight component is, once again, an equivalence class of the transitive closure of this relation. In particular, this $r$-graph $\HH$ does not contain a spanning copy of any closed manifold.
	
	In this paper, we addressed all two-dimensional surfaces simultaneously in Theorem~\ref{mainthm}. In the same spirit, one can also ask about other higher-dimensional manifolds than spheres, and in particular, whether there exist manifolds for which the codegree threshold differs qualitatively from that of the corresponding sphere; however, it is also perhaps worth remembering that unlike in the two-dimensional setting, not all higher-dimensional manifolds are triangulable; see~\citep{triang1, triang2} for example.
	
	Given that a minimum codegree of $n/3$ is the threshold at which an $n$-vertex $3$-graph is guaranteed to both have a spanning tight component and a spanning copy of the sphere, it is natural to wonder to what extent the main obstacle to finding a spanning copy of the sphere is the existence of a spanning tight component. In particular, one could ask whether the global codegree condition in the statement of Theorem~\ref{mainthm} of $\codeg(\HH) \geq (1/3+\mu)n$ for an $n$-vertex $3$-graph $\HH$ can be relaxed if we assume that $\HH$ has a spanning tight component $T$ and, say, that the codegrees of pairs inside $T$ are large. Perhaps surprisingly, this is not the the case! The following $3$-graph $\HH$ on $n$ vertices has minimum codegree almost $n/2$ in its unique tight component, and no spanning copy of any surface. The vertex set of $\HH$ consist of two designated vertices $u$ and $v$ and disjoint sets $X$ and $Y$ of size $(n-2)/2$ each, and the edge set of $\HH$ is obtained by starting with the complete $3$-graph on its vertex set and subsequently removing every edge meeting both $X$ and $Y$. Assuming $\HH$ contains a spanning copy of any surface $\SS$, remove any edges containing both $u$ and $v$ from $\SS$ to split it into two tight components; this easily leads to a contradiction by elementary topological arguments.
	
	The construction above, and its generalisation to $r$-graphs obtained by replacing the set $\{u,v\}$ by a set of $r-1$ vertices, leads to the following natural conjecture, made particularly attractive by the fact that it predicts a threshold at the codegree density of $1/2$, independent of the dimension.
	\begin{conjecture}\label{tightcompconj}
		For each $r \ge 2$, if $\HH$ is an $r$-graph on $n>r$ vertices comprising a single tight component such that, for every set $Z$ of $r-1$ vertices of $\HH$ that is contained in some edge of $\HH$, there are at least $n/2$ edges of $\HH$ containing $Z$, then $\HH$ contains a spanning copy of the $(r-1)$-dimensional sphere ${\mathbb S}^{r-1}$.
	\end{conjecture}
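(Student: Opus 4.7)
The plan is to follow the absorption-based blueprint of Theorem~\ref{mainthm}, substituting the single-tight-component hypothesis for the global codegree bound $n/3 + \mu n$ used there. The base case $r = 2$ of the conjecture reduces to Dirac's theorem itself, and I describe the plan below for $r = 3$; the extension to higher $r$ should be broadly analogous, with the role of Dirac's theorem taken by the inductive hypothesis applied to link graphs.

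The starting observation is a \emph{local Dirac property}. For any vertex $v$ lying in some edge of $\HH$, let $V(v) = \{a \in V(\HH) \setminus \{v\} : vab \in E(\HH) \text{ for some } b\}$ be its shadow neighbourhood, noting that $|V(v)| \ge n/2 + 1$ since any pair $\{v,a\}$ contained in some edge has codegree at least $n/2$. Every $a \in V(v)$ has at least $n/2$ neighbours in the link graph $L(v)$ for the same reason, and since $n/2 \ge |V(v)|/2$, Dirac's theorem applies to $L(v)[V(v)]$ to yield a Hamilton cycle $C_v$; coning off from $v$ gives a triangulated disk in $\HH$ with $v$ in its interior and $C_v$ as boundary. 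These \emph{fan-disks} are the basic building blocks for the argument and are abundant.

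Using this abundance, I would build green-tinged absorbers analogous to those in Lemma~\ref{greenabsorb} by locating, for each vertex $v$, a small sphere in $\HH \setminus \{v\}$ with two distinguished edges that can be modified, via two additional triangles, to incorporate $v$. A random-reservoir-plus-greedy argument should produce a vertex-disjoint family of such gadgets covering a preselected reservoir. Next, outside the reservoir, I would find an almost-spanning collection of small spheres using a regularity-based covering argument in the spirit of Lemma~\ref{almost-cover}, restricted to the sub-hypergraph supported on pairs of positive codegree. Because each active pair has codegree at least $n/2$, all relevant link graphs and shadow graphs are quite dense where they are supported, which should be enough to push the covering machinery through with minor modifications (restricting all density-based computations to shadow-supported subgraphs).

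The main obstacle, and the reason the conjecture is genuinely difficult, is the final gluing step. Given two disjoint spheres, tight-connectedness of $\HH$ furnishes a tight path $e_0, e_1, \ldots, e_k$ of triangles between an edge of each, with $e_i$ and $e_{i+1}$ sharing a pair for every $i$. I would fatten such a tight path into a triangulated annulus by selecting, for each $i$, a fresh vertex $x_i$ whose link contains appropriate pairs from $e_i$ and $e_{i+1}$, so that the resulting triangles glue together into a cylinder realising the connected sum of the two spheres. Without the $+\mu n$ density buffer of Theorem~\ref{mainthm}, however, ensuring vertex-disjoint choices of the $x_i$ along an arbitrarily long tight path requires an alternating-path exchange argument reminiscent of the original proof of Dirac's theorem: when the greedy fattening gets stuck, one must reroute the tight path itself, invoking the local Dirac condition to find an alternative path through $\HH$ that avoids the already-used vertices. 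Making this rigorous while using only the $n/2$ codegree bound is, in my view, the crux of the problem, and is precisely what forces the conjecture's threshold to match Dirac's.
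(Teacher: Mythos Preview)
The statement you are attempting to prove is Conjecture~\ref{tightcompconj}, which the paper poses as an \emph{open problem} in the concluding section; there is no proof in the paper to compare your proposal against. You yourself recognise this implicitly when you call the gluing step ``the crux of the problem'' and concede that ``making this rigorous'' is exactly what remains to be done. So the proposal is not a proof but a sketch of a strategy, and the honest assessment is that the central difficulty is unresolved.

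Beyond the gluing step, two earlier stages of your outline are also less secure than you suggest. First, the absorber construction in Lemma~\ref{greenabsorb} and the almost-covering argument in Lemma~\ref{almost-cover} both rely on the \emph{global} codegree bound $\codeg(\HH)\ge(1/3+\mu)n$, which guarantees that every link graph has large minimum degree on the whole vertex set. Under the hypothesis of the conjecture you only know that pairs \emph{already contained in some edge} have codegree at least $n/2$; pairs not lying in any edge may have codegree zero, so link graphs can be globally very sparse even though they are dense on their support. Porting the regularity and averaging arguments to this ``supported'' setting is not a minor modification: the reduced graph may fail to have the matching structure Lemma~\ref{matchpart} exploits, and the counting in Lemmas~\ref{firstcolour} and~\ref{secondcolour} uses the global bound to produce $\Omega(n^3)$ tight walks from every edge, which is no longer automatic. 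Second, your fan-disk observation, while correct, only tells you that each vertex sits inside a large disk; it does not by itself give the robust connectibility between \emph{arbitrary} pairs of green edges that Lemma~\ref{colourthm} provides and that the final connected-sum step needs. In short, every one of the three phases (absorb, almost-cover, connect) genuinely depends on the $\mu n$ slack in Theorem~\ref{mainthm}, and replacing that slack by the single-tight-component hypothesis is precisely what makes the conjecture open.
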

	
	Another natural question along the same lines is to ask how large a sphere can be found in a $3$-graph with a given minimum codegree below $n/3$. Here, it may be that every tight component meets relatively few vertices, but in that case pairs, within a given tight component will have large codegree, relative to the number of vertices met, so Conjecture~\ref{tightcompconj} may be helpful. However, it is difficult even to determine how large a tight component is guaranteed in this regime; for bounds in this direction, see \cite{tight-comps}.
	
	Let us close by remarking that many extremal results about cycles in graphs ought to have natural generalisations formulated in terms of triangulations of spheres in hypergraphs. We shall resist the temptation to list further open problems of this kind here, although we would certainly be very interested to see more results of this nature. Perhaps the main message of this paper is that it is possible to do `extremal simplicial topology' with a flavour similar to extremal graph theory, and that some of the major techniques in the latter field, like regularity and absorption, can be brought to bear on the former.
	
	\section*{Acknowledgements}
	The first and second authors were supported by the European Research Council grant 639046 under the European Union's Horizon 2020 research and innovation programme, and the fourth author was partially supported by NSF Grant DMS-1800521.
	
	Some of the research in this paper was carried out while the third and fourth authors were visiting the University of Warwick, and was continued while the first and second authors were visiting the University of Cambridge; we are grateful for the hospitality of both universities.
	
	We would also like to thank Tim Gowers, Richard Mycroft and Istv\'an Tomon for helpful discussions at various stages of this project.
	\bibliographystyle{amsplain}
	\bibliography{spanning_surfaces}
	
\end{document}